

\documentclass[12pt]{article}
\usepackage[margin=0.8in,footskip=0.25in]{geometry}
\usepackage[utf8]{inputenc}

\usepackage{amsmath,amssymb,amsthm,color,mathtools}
\usepackage{caption}

\usepackage{siunitx}

\usepackage[numbers]{natbib}
\bibliographystyle{plainnat}
\makeatletter
\def\NAT@spacechar{~}
\makeatother

\theoremstyle{plain}

\newtheorem{lem}{Lemma}

\newtheorem{Proposition}{Proposition}

\newtheorem{Theorem}{Theorem}

\theoremstyle{remark}
\newtheorem*{rmrk*}{Remark}
\newtheorem{rmrk}{Remark}

\theoremstyle{definition}
\newtheorem{ex}{Example}

\newtheorem{assumption}{Assumption}

\usepackage[textsize=scriptsize]{todonotes}

\usepackage{tcolorbox}

\numberwithin{equation}{section}
\usepackage{enumerate}
\usepackage{graphicx}
\usepackage{color}

\usepackage[pdftex,colorlinks = true]{hyperref}
\hypersetup{citecolor=darkblue,linkcolor=darkblue,urlcolor=darkblue}
\definecolor{darkblue}{rgb}{0,0,.6}

\newcommand{\ii}{\mathrm{i}}

\newcommand{\vertiii}[1]{{\left\vert\kern-0.25ex\left\vert\kern-0.25ex\left\vert #1 
    \right\vert\kern-0.25ex\right\vert\kern-0.25ex\right\vert}}

\newcommand{\convP}{\stackrel{P}{\to}}
\newcommand{\convd}{\stackrel{\mathrm{d}}{\to}}

\newcommand{\D}{S_n^V}
\newcommand{\Dsub}[1]{S_{n,#1}^V}

\usepackage{authblk}

\title{The maximum of the periodogram of Hilbert space valued time series}
\author[a]{Cl\'ement Cerovecki}
\author[b]{Vaidotas Characiejus}
\author[c]{Siegfried H\"ormann}
\affil[a]{D\'epartement de math\'ematique, Universit\'e libre de Bruxelles, Belgium}
\affil[a]{Department of Mathematics, Katholieke Universiteit Leuven, Belgium}
\affil[b]{Department of Statistics, University of California, Davis, USA}
\affil[c]{Institute of Statistics, Graz University of Technology, Graz, Austria}
\date{July 4, 2020}

\begin{document}

\maketitle

\begin{abstract}
\noindent We are interested to detect periodic signals in Hilbert space valued time series when the length of the period is unknown. A natural test statistic is the maximum Hilbert-Schmidt norm of the periodogram operator over all fundamental frequencies. In this paper we analyze the asymptotic distribution of this test statistic. We consider the case where the noise variables are independent and then generalize our results to functional linear processes. Details for implementing the test are provided for the class of functional autoregressive processes.  We illustrate the usefulness of our approach by examining air quality data from Graz, Austria. The accuracy of the asymptotic theory in finite samples is evaluated in a simulation experiment.

\medskip
\noindent \textbf{Keywords:} periodogram, periodicities, spectral analysis, time series, functional data, hypothesis testing.

\medskip
\noindent \textbf{MSC2020:} 62G32, 62M10, 62M15, 62R10, 62G10.
\end{abstract}

\section{Introduction}\label{introunif}
Periodic characteristics are present in many time series due to various factors such as different seasons, meteorological phenomena, human economic activity, transport, etc. The interest to detect, analyze and model periodicities goes back to the origins of time series analysis (for example, \citet{schuster:1898}, \citet{walker:1914}, \citet{yule:1927}, \citet{fisher:1929}, \citet{grenander:rosenblatt:1957},  \citet{jenkins:priestley:1957}, \citet{hannan:1961}, \citet{shimshoni:1971} to name just a few).

The primary motivation of this paper is to develop a methodology to detect periodicities in functional time series (FTS). This a sequence $\{X_t\}_{t\geq 1}$, where each $X_t$ is a curve $\{X_t(u)\}_{u\in \mathcal{U}}$. FTS have been gaining interest in recent years due to the advances of modern technology and the availability of high frequency data. Frequently, FTS arise from measurements obtained by separating a continuous time process $\{Y(u)\}_{u\geq 0}$ into natural consecutive intervals, for instance, days. Then, in an appropriate time scale we have $X_t(u)=Y(t+u)$ for $u\in \mathcal{U}=[0,1]$. Examples include
volume of credit card transactions (\citet{laukaitis:2002}),
electricity spot prices (\citet{liebl:2013}),
high frequency asset price data (\citet{horvath:kokoszka:rice:2014}), 
daily pollution level curves (\citet{aue:dubartNorinho:hormann:2015}), 
daily vehicle traffic curves (\citet{klepsch:kluppelberg:wei:2016}), etc. It should be noted, that such a segmentation already accounts for a periodic structure in the underlying continuous time process.  For example, when we segment into daily data, it is because we expect a similar daily fluctuation in each curve. Our interest is then to investigate if there remains a periodic behavior with respect to the discrete time parameter $t$.

While this problem is well explored in the univariate setting (see Section~10.2 of \citet{brockwell:davis:1991} for an overview of the classical tests), developments in multivariate or functional context are restricted to periodicity tests where the length of the period is known (see \citet*{macneill:1974} and \citet*{hoermann:2018}).
This paper is motivated by the interest in \emph{testing for an unspecified period}, which makes the problem considerably more complex and requires an entirely different theoretical approach. Testing for an unspecified period (in residuals or raw data) is relevant, because periodic behavior can have diverse causes and quite often is not evident. Even though sometimes we expect that the data contains, for example, a weekly or monthly periodic component, there are situations when the period of a latent signal is not so evident. For instance, the solar cycle is a nearly periodic $11$-year change in the Sun's activity measured in terms of variations in the number of observed sunspots on the solar surface discovered by \citet{schwabe:1843}. In Section~\ref{test} we also show that our test indicates an unexpected periodic component in the air quality data set from Graz, Austria.

Our test is based on the frequency domain approach to FTS analysis, which is rather natural in this context. This topic has been gaining a significant amount of attention in recent years and it is very useful in various problems (see, for example, \citet{panaretos:tavakoli:2013a}, \citet*{hoermann:kidzinski:hallin:2015}, \citet{zhang:2016}, \citet{characiejus:rice:2020} among others). For the theoretical developments which follow we consider time series with values in an abstract separable Hilbert space. In this way we cover functional and multivariate data. For the latter our results are also new.

Before we describe our approach in detail, we introduce notation that is used throughout the paper. Suppose that $H_0$ is a real separable Hilbert space equipped with an inner product~$\langle \cdot,\cdot\rangle:H_0\times H_0\to\mathbb R$ and the corresponding norm~$\|\cdot\|:H_0\to[0,\infty)$. The complexification of $H_0$ is denoted by $H\coloneqq H_0\oplus \ii H_0$ and the space $H$ inherits the Hilbert space structure from $H_0$. The complex inner product is defined as $\langle u,v\rangle_{H_0} = \langle u_0,v_0 \rangle + \langle u_1,v_1 \rangle +\ii(\langle u_1,v_0 \rangle - \langle u_0,v_1 \rangle)$ for any $u = u_0+\ii u_1$ and $v = v_0+\ii v_1$ in $H$ with $u_0,u_1,v_0,v_1\in H_0$. For easier notation, we henceforth consider $H_0$ as a subspace of $H$ and use $\langle \cdot,\cdot\rangle$ for the real and the complex inner product. We do the same for the norm and other definitions to come. $\mathcal{L}(H)$ denotes the space of bounded linear operators on $H$ and it is equipped with the usual operator norm $\|A \|=\sup_{\|x\|=1}\|A(x)\|$. We say that an operator $A$ is Hilbert-Schmidt (trace-class) if its singular values $\{\sigma_k\}_{k\geq 1}$ are square summable (absolutely summable). We define the corresponding Hilbert-Schmidt norm $\|A\|_{\mathcal S}=(\sum_{k=1}^\infty\sigma_k^2)^{1/2}$ and the trace norm $\|A\|_{\mathcal{T}}=\sum_{k=1}^\infty\sigma_k$ (see \citet{weidmann:1980} for more details). For $x,y\in H$, the tensor of $x$ and $y$ is a rank one operator $x\otimes y:H\to H$ defined by $(x\otimes y)(z)\coloneqq\langle z,y\rangle x$ for each $z\in H$. In particular this gives rise to the covariance operator $\mathrm{Var}(X) := E[(X-EX)\otimes (X-EX)]$. We note that for $H_0=\mathbb{R}^d$ with $d>1$ this is the usual covariance matrix. For more details on random elements in Hilbert spaces, we refer to \citet{bosq:2000}.

Suppose that $X_1,\ldots,X_n$ are observations of random elements with values in some separable Hilbert space $H_0$ and define the discrete Fourier transform (DFT) of $X_1,\ldots,X_n$ by setting
\[
	\mathcal X_n(\omega)\coloneqq\frac1{\sqrt{n}}\sum_{t=1}^n X_t e^{-\ii t\omega}
\]
for $n\ge1$ and $\omega\in[-\pi,\pi,]$, where $\ii=\sqrt{-1}$. By its very definition $\mathcal{X}_n(\omega)$ is an element of the complex Hilbert space $H:=H_0\oplus \ii H_0$ for $\omega\in[-\pi,\pi]$. The periodogram operator is defined by
 \begin{equation}\label{per}
	I_n(\omega)
	\coloneqq\mathcal X_n(\omega)\otimes \mathcal X_n(\omega)
\end{equation}
for $n\ge1$ and $\omega\in[-\omega,\omega]$ and it is a well-known and important tool in time series analysis. It is the main ingredient for estimation of the spectral density operator (see \citet{panaretos:tavakoli:2013a}) and it is the key statistic for detection of periodic signals in the data. What is of particular interest is the maximum of the periodogram operator defined by
\begin{equation} \label{MnX}
	M_n
	\coloneqq\max_{j=1,\dots,q}\|I_n(\omega_j)\|_{\mathcal S}
	=\max_{j=1,\dots,q}\| \mathcal{X}_n(\omega_j)\|^2,
\end{equation}
where $\omega_1,\ldots,\omega_q$ are the Fourier or the fundamental frequencies given by $\omega_j=2\pi j/n$ with $j=1,\ldots,q$ and $q=\lfloor (n-1)/2\rfloor$. In the univariate case the exact distribution of $M_n$ can be derived for independent and identically distributed (iid) Gaussian data (see \citet{fisher:1929} as well as Section~10.2 of \citet{brockwell:davis:1991}). Then $M_n$ is the maximum of $q$ iid standard exponential random variables and $M_n$ belongs to the domain of attraction of the standard Gumbel distribution. That is, $M_n-\log q$ converges in distribution to the standard Gumbel distribution as $n\to\infty$ (the cumulative distribution function of the standard Gumbel distribution is given by $F(x)=\exp\{-e^{-x}\}$ for $x\in\mathbb R$). If we superimpose a sinusoidal signal $s_t=\alpha\cos(\theta+\omega_j t)$ for some $j\in \{1,\ldots, q\}$ to the observations, then $M_n$ will diverge at a rate proportional to $n$, which in turn then leads to a very powerful test statistic. 

The assumption of Gaussianity is restrictive and hence an alternative approach would be to establish the asymptotic distribution of the appropriately standardized maximum $M_n$ under more general conditions.
\citet{walker:1965} conjectured that the same result still holds even if the random variables are not normal, provided that the moments of the distribution of $X_1,\ldots,X_n$ up to some sufficiently high order exist. \citet{walker:1965} also stated that no proof was known at the time and that the problem of constructing one is undoubtedly extremely difficult. Almost $35$ years later, \citet{davis:mikosch:1999} proved that the limit indeed remains the same provided that $\operatorname E|X_1|^s<\infty$ with some $s>2$ using a Gaussian approximation technique due to \citet{einmahl:1989}. Later on the results of \citet{davis:mikosch:1999} were extended by~\citet{lin:2009} to a broad class of stationary processes.

The main result of this paper is an extension of the result of \citet{davis:mikosch:1999} (see Theorem~2.1 therein) to real separable Hilbert spaces (finite dimensional or infinite dimensional) under certain technical conditions (see Theorems~\ref{mainthm0}, \ref{thm:finite}, \ref{mainthm}, and \ref{fancythm}). A key ingredient of our proof is a powerful Gaussian approximation developed by~\citet{chernozhukov:2017} which in turn relies on an anti-concentration inequality due to~\citet{nazarov:2003}. In fact, we obtain a slight extension of Proposition~3.2 of \citet{chernozhukov:2017} by making the dependence of the bound on certain parameters explicit (see \autoref{p:cher+c} as well as Appendix~\ref{bdep}). This result might be of independent interest. 

In many situations, assuming that the observations are iid random elements is not realistic and hence we provide extensions of our main results to dependent sequences. Following the classical approach of~\citet{walker:1965}, we provide a generalization to linear processes (see \autoref{cor:lpinfty} as well as \autoref{thm:mv}). Our \autoref{lem:lp} not only extends the results of \citet{walker:1965} but also does so under weaker conditions.

These results allow us to construct tests for hidden periodicities in time series with values in a separable Hilbert space which complement the methods of \citet{hoermann:2018}, where the length of the period is assumed to be known. Specifically, we want to test the null hypothesis $\mathcal H_0$ that the observations are generated by a linear process (no periodic component) against the alternative hypothesis $\mathcal H_1$ that the observations are generated by a linear process with a superimposed deterministic periodic component with an unknown period.  We also establish the consistency of the proposed test (see \autoref{th:applconsist}) without assuming any specific shape or form of the superimposed deterministic periodic component.

The rest of the paper is organized as follows. In Section~\ref{mainresultssec} we formulate our main theorems which are valid for iid data. In Section~\ref{s:lin} we extend these results to linear processes. Then we illustrate in Section~\ref{test} how to use our results to construct a test for periodic signals in functional time series at some unknown frequency. We evaluate the finite sample behavior in a simulation study and with a real data example in Section~\ref{s:empirical}. We give a conclusion in Section~\ref{s:conclusion} and provide the proofs in Section~\ref{proofsart3}. In the Appendix, we prove two theorems which are of separate interest and which are needed for proving our main results.

\section{Main results} \label{mainresultssec}
Suppose that $\{X_t\}_{t\ge1}$ are iid random elements with values in $H_0$ such that $ EX_1=0$ and $E\|X_1\|^2<\infty$. Let $\{v_k\}_{k\ge1}$ be the eigenvectors (principal components) of the covariance operator $E[X_1\otimes X_1]$ with their corresponding eigenvalues $\{\lambda_k\}_{k\ge1}$. The $\{v_k\}_{k\ge1}$ form an orthonormal basis of $H_0$ and  $\{\lambda_k\}_{k\ge1}$ are indexed in a non-increasing order. We use the following assumption in some of our results.
\begin{assumption}\label{ass:eigen}
$\lambda_k>\lambda_{k+1}$ for each $k\ge1$.
\end{assumption}
\noindent
Below we use $V\sim \operatorname{Exp}(\theta)$ to indicate that $V$ follows an exponential distribution with mean $1/\theta$ and $V\sim \operatorname{Hypo}(\theta_1,\ldots, \theta_p)$ if the variable $V$ follows a hypoexponential distribution, i.e.\ $V\sim \sum_{k=1}^pE_i$ where $E_i$ are independent $\operatorname{Exp}(\theta_i)$ random variables with $1\le i\le p$. As usual, $N(\mu,\sigma^2)$ denotes the normal distribution with mean $\mu$ and variance $\sigma^2$.
\subsection{The multivariate setup}
We start by studying the projections of $X_t$'s onto the space spanned by $\{v_1,\ldots, v_d\}$:
\[
	X_t^d=\sum_{k=1}^d\langle X_t,v_k\rangle v_k,\quad t\geq 1.
\]
The DFT and the periodogram operator of $\{X_t^d\}_{1\le t\le n}$ are defined by
\begin{equation}\label{MnXp}
	\mathcal X_n^d(\omega)
	=n^{-1/2}\sum_{t=1}^nX_t^de^{-\ii t\omega}
	\quad \text{and}\quad
	I_n^d(\omega)
	=\mathcal{X}_n^d(\omega) \otimes \mathcal{X}_n^d(\omega),
\end{equation}
respectively, for $\omega\in[-\pi,\pi]$. Observe that  $X_t^d=X_t$ and  $\mathcal{X}_n^d(\omega)=\mathcal{X}_n(\omega)$ if  $H_0= \mathbb{R}^d$. So the multivariate setting can be viewed as a special case.
 
If we assume for the moment that the $X_t$'s are iid Gaussian random elements, then we have that
\begin{equation}\label{eq:intuition}
        \max_{1\leq j\leq q}  \|\mathcal{X}_n^d(\omega_j)\|^2
	=\max_{1\le j\le q}\Bigl\{\sum_{k=1}^d\lambda_kE_{kj}\Bigr\},
\end{equation}
where $E_{kj}$ are independent $\operatorname{Exp}(1)$ random variables for $1\le k\le d$ and $1\le j\le q$. This follows from the orthogonality of $\{v_k\}_{k\ge1}$, which implies that $\langle X_t,v_k\rangle$ are independent $N(0,\lambda_k)$ random variables. Consequently, $\|\mathcal{X}_n^d(\omega_j)\|^2$ are independent $\operatorname{Hypo}(\lambda_1^{-1},\ldots,\lambda_d^{-1})$ random variables. To have a non-degenerate limiting distribution, the variable $M_n$ needs to be centered and scaled. The corresponding sequences depend on the eigenvalues of $\operatorname{Var}(X_1^d)$.  If \autoref{ass:eigen} holds, then we have that $\lambda_1^{-1}(\max_{1\leq j\leq q}  \|\mathcal{X}_n^d(\omega_j)\|^2-b_q^d)\convd\mathcal{G}$ as $n\to\infty$, where $\mathcal{G}$ denotes a standard Gumbel distribution and where 
\begin{equation}\label{defseq}
b_{n}^d=\lambda_1\log(n\alpha_{1,d})\quad\text{and}\quad\alpha_{1,d} =  \prod_{j=2}^d(1-\lambda_j/\lambda_1)^{-1}
\end{equation}
(see \autoref{hypofixeddim} in Section~\ref{sec:Gum}).

If $H_0=\mathbb R^d$, and if $\Sigma\coloneqq\operatorname E[X_tX_t']$ has full rank we consider the standardized process $\{\Sigma^{-1/2}X_t\}_{t\ge1}$. Alternatively, we may directly assume that $\mathrm{Var}(X_1)=I_d$, where $I_d$ is the identity matrix. Then $\max_{1\leq j\leq q}  \|\mathcal{X}_n(\omega_j)\|^2-c_q\xrightarrow d\mathcal G$ as $n\to\infty$, where
\begin{equation}\label{eq:stand:c_n}
	c_n=\log n+(d-1)\log\log n-\log(d-1)!
\end{equation}
for $n\ge3$ (see Example~1 of \citet{kang:serfozo:1999} or Table~3.4.4 of \citet{embrechts:1997}).

In the following two theorems, we extend these results to iid random elements provided that the moments up to some sufficiently high order exist.
\begin{Theorem}\label{mainthm0}
Let $\{X_t\}_{t\geq 1}$ be iid random elements in $H_0$ with $E\|X_1\|^r<\infty$ for some $r>2$. Suppose that \autoref{ass:eigen} holds and $d\ge1$ is fixed. Then
\begin{equation}\label{gumbconv}
\lambda_1^{-1}(\max_{1\leq j\leq q}  \|\mathcal{X}_n^d(\omega_j)\|^2 - b_{q}^d) \convd \mathcal{G}\quad\text{as}\quad n\to\infty,
\end{equation}
where $b_q^d$ is given by \eqref{defseq}.
\end{Theorem}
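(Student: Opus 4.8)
The plan is to reduce the non-Gaussian case to the Gaussian computation recorded in \eqref{eq:intuition}--\eqref{defseq} via a Gaussian approximation for the joint law of the DFT coordinates, and then invoke the already-established Gumbel limit (\autoref{hypofixeddim}) for the maximum of $q$ independent $\operatorname{Hypo}(\lambda_1^{-1},\dots,\lambda_d^{-1})$ variables. Concretely, write $\xi_{tk}=\langle X_t,v_k\rangle$ for $1\le k\le d$, so that $\|\mathcal X_n^d(\omega_j)\|^2=\sum_{k=1}^d |n^{-1/2}\sum_{t=1}^n \xi_{tk}e^{-\ii t\omega_j}|^2$. Separating real and imaginary parts, $\mathcal X_n^d(\omega_j)$ is determined by the $2d$-dimensional vector $Z_{nj}=n^{-1/2}\sum_{t=1}^n \big(\xi_{t1}\cos(t\omega_j),\xi_{t1}\sin(t\omega_j),\dots,\xi_{td}\cos(t\omega_j),\xi_{td}\sin(t\omega_j)\big)$, and $M_n^d:=\max_{1\le j\le q}\|\mathcal X_n^d(\omega_j)\|^2$ is a continuous function of the $2dq$-dimensional vector $(Z_{n1},\dots,Z_{nq})$. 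The first step is to show that this vector is close, in an appropriate sense, to a centered Gaussian vector with the matching covariance structure; the covariances are the classical ones (asymptotically, the real and imaginary parts at each $\omega_j$ are independent with variance $\lambda_k/2$ in coordinate $k$, and distinct frequencies are asymptotically uncorrelated), which is exactly what produces the independent $\operatorname{Hypo}$ structure in \eqref{eq:intuition}.

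The second step is the technical heart: one cannot directly apply a CLT because the dimension $2dq\to\infty$, and the maximum is over $q\to\infty$ coordinates. Here I would use the high-dimensional Gaussian approximation of \citet{chernozhukov:2017} — in the sharpened form the paper advertises as \autoref{p:cher+c} — applied to the family of statistics indexed by rectangles (or by the relevant convex sets) that encode the event $\{M_n^d\le x\}$. The point is that $\{M_n^d\le x\}$ can be written as an intersection over $j=1,\dots,q$ of events $\{\sum_{k=1}^d\lambda_k E_{kj}\le x\}$-type regions, each depending on a $2d$-block of coordinates; these are not hyperrectangles but they are finite unions/intersections of half-spaces in each block, so the Chernozhukov--Chetverikov--Kato machinery (together with the Nazarov anti-concentration inequality) gives a bound on $\sup_x |P(M_n^d\le x)-P(\widetilde M_n^d\le x)|$ that tends to $0$ under $E\|X_1\|^r<\infty$, $r>2$, with $q$ growing polynomially in $n$. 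Making the dependence of the Chernozhukov bound on $d$ (fixed here) and on the minimal eigenvalue $\lambda_d>0$ explicit — which \autoref{ass:eigen} guarantees is strictly positive — is precisely why the paper needs the refined \autoref{p:cher+c}.

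The third step is bookkeeping: having replaced $M_n^d$ by its Gaussian analogue $\widetilde M_n^d$, note that $\widetilde M_n^d \stackrel{d}{=}\max_{1\le j\le q}\sum_{k=1}^d \lambda_k E_{kj}$ up to an asymptotically negligible error coming from the $o(1)$ terms in the covariance matrix (which I would control by a Slepian/Gaussian-interpolation argument or by absorbing them into the Chernozhukov bound). Then \autoref{hypofixeddim} delivers $\lambda_1^{-1}(\widetilde M_n^d-b_q^d)\convd\mathcal G$ with $b_q^d$ as in \eqref{defseq}, and combining with the uniform closeness from step two finishes the proof. The main obstacle I anticipate is step two: verifying that the Chernozhukov--Chetverikov--Kato approximation applies with error $o(1)$ uniformly in $x$ when the target events are blockwise-polyhedral (not simple rectangles) and the number of blocks $q$ grows with $n$ — this requires the careful tracking of constants in \autoref{p:cher+c} and a Berry--Esseen-type input for the $2d$-dimensional partial sums $Z_{nj}$ that is uniform in $j$, the latter being delicate because the summands $\xi_{tk}e^{-\ii t\omega_j}$ have $j$-dependent (though bounded) weights.
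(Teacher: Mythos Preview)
Your overall architecture---encode $M_n^d$ through a $2dq$-dimensional partial sum, apply a Chernozhukov--Chetverikov--Kato (CCK) Gaussian comparison over $2d$-sparsely convex sets, and then invoke \autoref{hypofixeddim}---is exactly the paper's plan. But there is one genuine gap and a few inaccuracies worth flagging.

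\textbf{The missing step is truncation.} With only $E\|X_1\|^r<\infty$ for some $r>2$ you cannot verify the moment hypotheses of the CCK bound: condition~(ii) requires third and fourth moments of the coordinates, and condition~(iii) requires a sub-exponential tail. The paper first replaces $X_t$ by $\tilde X_t=X_tI_{\{\|X_t\|\le n^{1/r}\}}-E[X_1I_{\{\|X_1\|\le n^{1/r}\}}]$ and shows (\autoref{lemma:trunc}) that this does not alter the limit because $nP(\|X_1\|>n^{1/r})\to0$. For the truncated variables one has $|V_{t,j}|\le 2n^{1/r}$, which gives (ii) with $B_n=cn^{1/r}$ and (iii) trivially; the CCK bound \eqref{cherrho} then is $O(n^{1/(3r)-1/6}\log^{7/6}n)=o(1)$ since $r>2$. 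Without truncation this part of the argument does not go through.

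\textbf{Which CCK proposition.} For fixed $d$ you do not need the sharpened \autoref{p:cher+c}; the original \autoref{p:cher} suffices because the constant may depend on $b=\lambda_d/2$ and $s=2d$, both fixed here. The explicit-constant version is used only in the proof of \autoref{mainthm}, where $d=d_n\to\infty$.

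\textbf{Geometry of the target sets.} The event $\{M_n^d\le x\}=\bigcap_{j=1}^q A_j$ with $A_j=\{y\in\mathbb R^{2dq}:\|(y_i)_{i\in J_j}\|^2\le x\}$, where each $A_j$ is a Euclidean ball in a $2d$-dimensional coordinate block. These are convex (not polyhedral, not ``finite unions/intersections of half-spaces''), and the intersection lies in $\mathcal A^{\mathrm{sp}}(2d)$; this is exactly the class the CCK result is designed for, so no additional approximation of the sets is needed.

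\textbf{Covariance structure.} There are no ``$o(1)$ terms in the covariance matrix'' coming from the trigonometric weights: $n^{-1}\sum_{t=1}^n f_tf_t'=\tfrac12 I_{2q}$ holds exactly at the fundamental frequencies. The only mismatch with the target $\operatorname{Hypo}(\lambda_1^{-1},\dots,\lambda_d^{-1})$ law comes from using $\operatorname{Var}(\tilde X_1)$ instead of $\operatorname{Var}(X_1)$, i.e.\ from $\tilde\lambda_k$ and $\tilde v_k$ in place of $\lambda_k$ and $v_k$. The paper handles this directly via \autoref{lemma:convofcovop} and \autoref{lem:convrateeigen} (giving $|\tilde\lambda_k-\lambda_k|+\|\tilde v_k-c_kv_k\|=o(n^{-(1-2/r)})$), together with the elementary fact that $\max_j E_{kj}=O_P(\log n)$; no Slepian or Gaussian-interpolation argument is needed.
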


\begin{Theorem}\label{thm:finite}
Let $\{X_t\}_{t\geq 1}$ be iid random vectors in $\mathbb{R}^d$ with $E\|X_1\|^r<\infty$ for some $r>2$ and $ E[X_1X_1']=I_d$, where $I_d$ is the identity matrix. Then
\[
	\max_{1\leq j\leq q}  \|\mathcal{X}_n(\omega_j)\|^2-c_q\xrightarrow d\mathcal G\quad\text{as}\quad n\to\infty,
\]
where $c_q$ is given by \eqref{eq:stand:c_n}.
\end{Theorem}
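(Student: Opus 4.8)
The plan is to reduce the theorem to a Gaussian (or exponential) comparison, much in the spirit of \citet{davis:mikosch:1999}, but using the finite-dimensional Gaussian approximation of \citet{chernozhukov:2017} (in the explicit form \autoref{p:cher+c}) as the engine. Write $\mathcal X_n(\omega_j) = A_{n,j} + \ii B_{n,j}$, where $A_{n,j} = n^{-1/2}\sum_{t=1}^n X_t\cos(t\omega_j)$ and $B_{n,j} = -n^{-1/2}\sum_{t=1}^n X_t\sin(t\omega_j)$, so that $\|\mathcal X_n(\omega_j)\|^2 = |A_{n,j}|^2 + |B_{n,j}|^2$. Stacking over $j = 1,\dots,q$ gives a $2dq$-dimensional vector $W_n$ of normalized partial sums of the iid mean-zero vectors $(X_t\cos(t\omega_j), X_t\sin(t\omega_j))_{j}$; using the orthogonality relations $\sum_t \cos(t\omega_j)\cos(t\omega_k) = \tfrac n2\delta_{jk}$ etc., the covariance of $W_n$ is (exactly, or up to a negligible boundary term) the identity $I_{2dq}$, so the Gaussian analogue $Z_n \sim N(0, I_{2dq})$ has the property that $\max_j \|\text{(Gaussian block $j$)}\|^2$ is precisely the maximum of $q$ iid $\chi^2_{2d}$-type variables, which after the centering $c_q$ converges to $\mathcal G$ by the classical extreme-value computation cited from \citet{kang:serfozo:1999}.

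The core step is then to show that $\max_j \|\mathcal X_n(\omega_j)\|^2$ and its Gaussian counterpart are close in Kolmogorov distance. Since the functional $w \mapsto \max_j \|w_j\|^2$ is not of the "max of linear forms" type covered directly by \autoref{p:cher+c}, I would first linearize: $\|w_j\|^2 = \max_{\|u\|\le 1, u \in \text{unit ball of }\mathbb R^{2d}} (\langle w_j, u\rangle)^2$, and then discretize the unit sphere of $\mathbb R^{2d}$ by a finite $\varepsilon$-net, so that $\max_j\|w_j\|^2$ is sandwiched (up to $O(\varepsilon)$ errors) between maxima of finitely many squared linear forms $\langle W_n, a_\ell\rangle$, $\ell = 1, \dots, L$ with $L = O(q\varepsilon^{-(2d-1)})$. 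Because $d$ is fixed, $L$ is polynomial in $n$, so $(\log L)^{7}/n \to 0$ and \autoref{p:cher+c} (applied to the vectors of linear forms, after the standard trick of writing $x^2$ via $\max$/smoothing, or by comparing the two distributions of the max over $\ell$ of $|\langle \cdot, a_\ell\rangle|$ on the relevant scale) yields that the distribution functions of the discretized max statistic under $W_n$ and under $Z_n$ differ by $o(1)$, uniformly on $\mathbb R$. Letting $\varepsilon\to 0$ slowly (e.g.\ $\varepsilon = (\log n)^{-1}$) and invoking the anti-concentration inequality of \citet{nazarov:2003} (already packaged inside \autoref{p:cher+c}) to control the discretization gap, we transfer the limiting Gumbel law from the Gaussian side to $\max_j \|\mathcal X_n(\omega_j)\|^2$.

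Two technical points require care. First, \autoref{p:cher+c} is a Berry–Esseen-type bound for iid summands with finite third (or $(2+\delta)$-th) moments; here we only assume $E\|X_1\|^r < \infty$ for some $r > 2$, so I would follow \citet{davis:mikosch:1999} and truncate: replace $X_t$ by $X_t\mathbf 1\{\|X_t\|\le \tau_n\}$ (recentered) with $\tau_n = n^{1/r}(\log n)^{-1}$ or similar, show via a maximal inequality and the Borel–Cantelli lemma that this truncation changes $\max_j\|\mathcal X_n(\omega_j)\|^2$ by $o_P(1)$, and verify that the truncated summands have enough moment growth for the bound to still tend to $0$. Second, the covariance of $W_n$ equals $I_{2dq}$ only up to terms of order $O(q/n) = O(1)$ coming from the endpoint $j$ near $q \approx n/2$ and from cross terms $\sum_t \cos(t\omega_j)\sin(t\omega_j)$; these are $O(1/n)$ per entry but there are $q$ of them, so I would either restrict the maximum to $j \le q(1-\delta)$ (and separately argue the top $\delta q$ frequencies contribute nothing to the limit, since the limit is an extreme-value law insensitive to a vanishing fraction of the coordinates — or rather, handle it by a direct second-moment bound), or absorb the discrepancy into the error term of the Gaussian comparison, checking it is $o(1)$.

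I expect the \textbf{main obstacle} to be the linearization/discretization step combined with the moment assumption: making the $\varepsilon$-net argument give a genuinely uniform (in the Kolmogorov sense) comparison for the \emph{squared}-norm maximum — rather than just for maxima of linear forms — while keeping the net cardinality $L$ small enough that $(\log L)^7/n \to 0$ and simultaneously ensuring the truncation level $\tau_n$ is compatible with the $(\log L)$-dependence in the explicit bound of \autoref{p:cher+c}. The rest — the classical extreme-value asymptotics for the $\chi^2_{2d}$ maximum giving $c_q$, and the truncation via Borel–Cantelli — is routine.
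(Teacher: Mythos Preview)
Your overall strategy is sound and would lead to a valid proof, but you take a substantially more circuitous route than the paper. The paper's key observation---which you miss---is that the event $\{\max_{1\le j\le q}\|\mathcal X_n(\omega_j)\|^2\le x\}$ is \emph{already} of the form $\{S_n^V\in\bigcap_{k=1}^q A_k\}$, where each $A_k=\{y\in\mathbb R^{2dq}:\|(y_j)_{j\in J_k}\|^2\le x\}$ is a convex set (a Euclidean ball in a coordinate subspace) depending on only $2d$ coordinates; hence $\bigcap_k A_k$ is $2d$-sparsely convex in the sense of \citet{chernozhukov:2017}. Since $d$ is fixed, the original \autoref{p:cher} (not the explicit-constant refinement \autoref{p:cher+c}) applies directly with $s=2d$ and delivers the Gaussian comparison in one stroke, with no $\varepsilon$-net on the sphere required. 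Your linearization route should work in principle, but the net adds an extra approximation layer and a separate anti-concentration argument to close the discretization gap---precisely the machinery that the sparsely-convex class was built to avoid.

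Two smaller corrections. First, your concern about the covariance of $W_n$ being only approximately identity is unfounded: for the fundamental frequencies $\omega_j=2\pi j/n$ the orthogonality relations $n^{-1}\sum_{t=1}^n f_tf_t'=\tfrac12 I_{2q}$ hold \emph{exactly} (this is \autoref{lemM1}), so there are no endpoint or cross-term errors to absorb. Second, because $d$, $s=2d$ and $b=\lambda_d/2$ are all fixed here, you do not need \autoref{p:cher+c} at all; the implicit-constant version suffices. Beyond this, your truncation step coincides with the paper's \autoref{lemma:trunc}, and on the Gaussian side the paper simply notes that $\|\mathcal Y_n(\omega_j)\|^2$ are iid $\operatorname{Gamma}(d,1)$ variables (equivalently $\chi^2_{2d}/2$), whose maximum is in the Gumbel domain with centering $c_q$ by the references you cite.
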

The proofs of \autoref{mainthm0} and \autoref{thm:finite} are given in Section~\ref{proofsart3}. They rely on a powerful Gaussian approximation due to~\citet{chernozhukov:2017} (see \autoref{p:cher}). If $H_0=\mathbb R$, we recover Theorem~2.1 of \citet{davis:mikosch:1999} as a special case of \autoref{thm:finite}. To the best of our knowledge, \autoref{mainthm0} is the first multivariate generalization of Theorem 2.1 of~\citet{davis:mikosch:1999}.

We now present an extension of \autoref{mainthm0}, where we let $d$ grow to infinity as $n\to\infty$.
\begin{Theorem}\label{mainthm}
Suppose that $E\|X_1\|^4<\infty$ and that \autoref{ass:eigen} holds. Assume that $\{k\lambda_k\}_{k\ge1}$ is eventually monotonic, i.e.\ there exists $k_0\ge1$ such that
\begin{equation} \label{assLem5}
k\lambda_k\ge (k+1)\lambda_{k+1}
\end{equation}
for all $k\ge k_0$. Then convergence \eqref{gumbconv} still holds if $d$ is replaced by a sequence of integers $\{d_n\}_{n\geq 1}$ such that $d_n\to\infty$, and
\begin{equation} \label{assmain}
  \frac{ d_n^{4}  }{\lambda_{d_n}^{1/2} }=o(n^{1/6}/\log^{7/6}n) \qquad \text{and} \qquad d_n= O(n^{\gamma_0})
\end{equation}
as $n\to\infty$ with 
\begin{equation}\label{gamma0}
\gamma_0 \; <\; \min\Bigl\{\min_{k\geq 2}\Bigl\{ \frac{1}{k} \Bigl(\frac{\lambda_1}{\lambda_k} -1\Bigr)\Bigr\},1\Bigr\}.
\end{equation}
\end{Theorem}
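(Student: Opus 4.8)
The plan is to reduce the statement, via a high-dimensional Gaussian approximation, to an extreme-value result for maxima of hypoexponential random variables with a growing number of phases, and then to establish that result.

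\textbf{Reduction to the Gaussian case.} Write $Y_{t,k}=\langle X_t,v_k\rangle$ and collect the real and imaginary parts of $\langle\mathcal X_n(\omega_j),v_k\rangle$, $1\le j\le q$, $1\le k\le d_n$, into a random vector $\bS_n\in\mathbb R^{2d_nq}$. Since the $X_t$ are iid, $\bS_n=\sum_{t=1}^n\xi_t$ is a sum of $n$ independent (not identically distributed, because of the factors $e^{-\ii t\omega_j}$) mean-zero vectors, and $\max_{1\le j\le q}\|\mathcal X_n^{d_n}(\omega_j)\|^2=\max_{1\le j\le q}\|\bS_n^{(j)}\|^2$, the maximum over $q$ disjoint coordinate blocks of size $2d_n$. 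The covariance of $\bS_n$ depends only on $E[X_1\otimes X_1]$ and on sums of trigonometric polynomials; hence if $\bG_n$ is a centered Gaussian vector with $\operatorname{Var}(\bG_n)=\operatorname{Var}(\bS_n)$, then $\bG_n$ has the law of the analogous DFT vector of iid Gaussian elements with covariance $E[X_1\otimes X_1]$, and orthogonality of the trigonometric system at the Fourier frequencies gives, for every $n$,
\[
	\max_{1\le j\le q}\|\bG_n^{(j)}\|^2\ \overset{\mathrm d}{=}\ \max_{1\le j\le q}S_j,\qquad S_j\ \text{iid}\ \operatorname{Hypo}(\lambda_1^{-1},\ldots,\lambda_{d_n}^{-1}).
\]
It therefore suffices to (i) bound the Kolmogorov distance between the laws of $\max_j\|\bS_n^{(j)}\|^2$ and $\max_j\|\bG_n^{(j)}\|^2$, and (ii) show $\lambda_1^{-1}(\max_{1\le j\le q}S_j-b_q^{d_n})\convd\mathcal G$.

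\textbf{Step (i): Gaussian approximation.} For every $r\ge0$ the event $\{\max_j\|\bS_n^{(j)}\|^2\le r\}$ is the intersection over $j$ of the balls $\{\|\bS_n^{(j)}\|^2\le r\}$, each convex and depending on only $2d_n$ coordinates, so this class of sets is $(2d_n)$-sparsely convex in the sense of \citet{chernozhukov:2017}. Applying \autoref{p:cher+c} — the refinement of their Proposition~3.2 with explicit dependence on the sparsity, the dimension and the minimal variance, proved in Appendix~\ref{bdep} — one would obtain
\[
	\sup_{r\ge0}\Bigl|\,P\bigl(\max_j\|\bS_n^{(j)}\|^2\le r\bigr)-P\bigl(\max_j\|\bG_n^{(j)}\|^2\le r\bigr)\Bigr|\ \le\ C\,\frac{d_n^{4}\,(\log n)^{7/6}}{\lambda_{d_n}^{1/2}\,n^{1/6}}
\]
up to lower-order factors. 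Here the power of $d_n$ comes from the sparsity parameter together with moment bounds for the summands $\xi_t$, which are of order $n^{-1/2}$ in each coordinate and are controlled using $E\|X_1\|^4<\infty$ after a preliminary truncation of $X_t$ at a suitably chosen level $\tau_n\to\infty$ (the truncation remainder, as well as the induced perturbation of the $\lambda_k$ and of $b_q^{d_n}$, being negligible by the fourth-moment assumption); the factor $\lambda_{d_n}^{-1/2}$ enters through the Nazarov anti-concentration inequality, $\lambda_{d_n}$ being (up to a constant) the smallest coordinate variance of $\bG_n$. By the first condition in \eqref{assmain} the right-hand side tends to $0$.

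\textbf{Step (ii): hypoexponential extremes with a growing number of phases.} Under \autoref{ass:eigen} the survival function of $S_1$ has the partial-fraction expansion $P(S_1>x)=\sum_{k=1}^{d_n}A_{k,d_n}e^{-x/\lambda_k}$ with $A_{k,d_n}=\prod_{j\le d_n,\,j\ne k}(1-\lambda_j/\lambda_k)^{-1}$ and $A_{1,d_n}=\alpha_{1,d_n}$, and $\alpha_{1,d_n}\uparrow\alpha_{1,\infty}<\infty$ because $\sum_k\lambda_k=E\|X_1\|^2<\infty$. Substituting $x=b_q^{d_n}+\lambda_1 y$, the leading term gives $q\,A_{1,d_n}e^{-x/\lambda_1}=e^{-y}(1+o(1))$, while the $k$-th term ($k\ge2$) contributes $A_{k,d_n}\,\alpha_{1,d_n}^{-\lambda_1/\lambda_k}\,e^{-\lambda_1y/\lambda_k}\,q^{1-\lambda_1/\lambda_k}$. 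The eventual monotonicity \eqref{assLem5} of $\{k\lambda_k\}$ (which, since $\sum_k\lambda_k<\infty$, also forces $k\lambda_k\to0$) yields $\lambda_j-\lambda_k\ge\lambda_k(k-j)/j$ for $j<k$ and $\lambda_j/\lambda_k\le k/(k+1)$ for $j>k$, and hence bounds $|A_{k,d_n}|$ by a constant times $d_n^{\,k}$ (up to $k$-dependent factors); since $q\asymp n$ and $d_n=O(n^{\gamma_0})$, the $k$-th contribution is $O\bigl(n^{\,k\gamma_0-(\lambda_1/\lambda_k-1)}\bigr)$ up to constants, and the exponent is negative and bounded away from $0$ uniformly in $k\ge2$ by \eqref{gamma0} (note $\tfrac1k(\lambda_1/\lambda_k-1)\to\infty$, so the minimum in \eqref{gamma0} is attained). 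Summing over $k$ gives $q\,P(S_1>b_q^{d_n}+\lambda_1y)\to e^{-y}$, whence, by independence of $S_1,\ldots,S_q$,
\[
	P\Bigl(\max_{1\le j\le q}S_j\le b_q^{d_n}+\lambda_1 y\Bigr)=\bigl(1-P(S_1>b_q^{d_n}+\lambda_1y)\bigr)^q\longrightarrow\exp(-e^{-y}),
\]
i.e.\ $\lambda_1^{-1}(\max_jS_j-b_q^{d_n})\convd\mathcal G$. (This is carried out in Section~\ref{sec:Gum} as the growing-dimension analogue of \autoref{hypofixeddim}.) Combining steps (i) and (ii) and using continuity of the Gumbel law proves \eqref{gumbconv} with $d$ replaced by $d_n$.

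\textbf{Main obstacle.} The crux is Step (i): off-the-shelf Gaussian approximations for maxima, such as \autoref{p:cher}, have error bounds that deteriorate with the ambient dimension, which here is $2d_nq$ and grows polynomially in $n$. Exploiting the sparse-convex structure of the sets $\{\max_j\|\cdot^{(j)}\|^2\le r\}$ and making the dependence of the bound on the sparsity $2d_n$, on $\log(\dim)$, on the moment control from $E\|X_1\|^4<\infty$, and on the smallest variance $\lambda_{d_n}$ (via Nazarov's inequality) fully explicit is exactly the purpose of \autoref{p:cher+c}; verifying its hypotheses for the DFT summands $\xi_t$ and reconciling the resulting rate with \eqref{assmain} is the technical heart of the proof. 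Step (ii) is comparatively routine once the hypoexponential tail is expanded, the only delicate point being the uniform-in-$k$ control of $A_{k,d_n}$ as $d_n\to\infty$, for which the eventual monotonicity of $\{k\lambda_k\}$ and the range restriction \eqref{gamma0} are tailor-made.
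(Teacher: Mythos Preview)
Your proposal is essentially the paper's own proof: the two-step split into a Gaussian comparison via \autoref{p:cher+c} on $(2d_n)$-sparsely convex sets (your Step~(i)) and the growing-phase hypoexponential extreme-value computation (your Step~(ii), which is \autoref{lemalpha}) match the paper's argument exactly, including the identification $b=\lambda_{d_n}/2$ from \autoref{lemM1} and the bound $|A_{k,d_n}|\lesssim d_n^{\,k}$ for the partial-fraction coefficients. One simplification you miss: the paper explicitly remarks that \emph{no truncation} is needed for this theorem, since $E\|X_1\|^4<\infty$ lets one verify conditions~(ii) and~(iv) of \autoref{p:cher+c} with a \emph{bounded} $B_n$ (take $q=4$), so your preliminary truncation step and the associated eigenvalue perturbations can be dropped entirely.
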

Since we assume that $\lambda_k$'s are strictly decreasing and summable, we have that $k\lambda_k=o(1)$ as $k\to\infty$ and hence we only additionally require $\{k\lambda_k\}_{k\ge1}$ to be eventually monotonic in \autoref{mainthm}.
The first condition in \eqref{assmain} ensures that the Gaussian approximation still holds while the second condition in \eqref{assmain}, as well as \eqref{assLem5} and \eqref{gamma0} are used to show that the hypoexponential distribution with an increasing number of parameters belongs to the domain of attraction of the Gumbel distribution (see \autoref{lemalpha} in Section~\ref{sec:Gum}).
If $d_n\to\infty$ as $n\to\infty$, we can choose a centring sequence $\{b_n\}_{n\geq 1}$ independently of $\{d_n\}_{n\ge1}$ by setting $b_n=\lim_{d\to\infty}b_n^d$ for $n\ge1$, where $b_n^d$ is defined by \eqref{defseq} (see \autoref{munp}).

\subsection{The infinite dimensional case}
The following theorem establishes a fully functional result, i.e.\ the convergence in distribution of  $\lambda_1^{-1}(M_n-b_q)$ as $n\to\infty$, where $M_n$ is defined by \eqref{MnX}. Technical conditions are connected with the decay rate of the eigenvalues $\{\lambda_k\}_{k\geq 1}$ of the covariance operator $\operatorname{Var}(X_1)$.
\begin{Theorem}\label{fancythm} Suppose that $E\|X_1\|^r<\infty$ for some $r\geq 4$ and let \autoref{ass:eigen} hold. Moreover, suppose that there exists a sequence $\{d_n\}_{n\geq 1}$ which satisfies the conditions of \autoref{mainthm}. Consider some sequence $\{\ell_k\}_{k\geq 1}$ of positive numbers such that $ \sum_{k=1}^\infty\ell_k = 1$ and assume that 
\begin{equation}\label{uglygood}
	\sum_{k=1}^\infty\ell_k^{-r/2}E|\langle X_1,v_k\rangle|^r<\infty
 \end{equation}
 and that
 \begin{equation}\label{condlemmMMb} 
 \sum_{k>d_n}   (\lambda_k/\ell_k)^{r/2}  = o(1/n)
\end{equation}
as $n\to\infty$. Then $\lambda_1^{-1}(M_n- b_q) \convd \mathcal{G}$ as $n\to\infty$, where $b_q=\lim_{d\to\infty}b_q^d$ with $b_q^d$ given by \eqref{defseq}.
\end{Theorem}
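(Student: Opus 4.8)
The plan is to deduce the infinite‑dimensional statement from the finite‑dimensional \autoref{mainthm} by truncating the Karhunen--Lo\`eve expansion of the scores at level $d_n$ and showing that the discarded tail is asymptotically negligible. Write $M_n^{d}\coloneqq\max_{1\le j\le q}\|\mathcal X_n^{d}(\omega_j)\|^2$. Since $\mathcal X_n^{d_n}(\omega_j)$ is the orthogonal projection of $\mathcal X_n(\omega_j)$ onto $\mathrm{span}\{v_1,\ldots,v_{d_n}\}$, Pythagoras gives $\|\mathcal X_n(\omega_j)\|^2=\|\mathcal X_n^{d_n}(\omega_j)\|^2+\|\mathcal X_n(\omega_j)-\mathcal X_n^{d_n}(\omega_j)\|^2$, and from $\max_j a_j\le\max_j(a_j+b_j)\le\max_j a_j+\max_j b_j$ for non‑negative $a_j,b_j$ we obtain
\[
	0\le M_n-M_n^{d_n}\le R_n\coloneqq\max_{1\le j\le q}\|\mathcal X_n(\omega_j)-\mathcal X_n^{d_n}(\omega_j)\|^2=\max_{1\le j\le q}\sum_{k>d_n}|\mathcal X_{n,k}(\omega_j)|^2,
\]
where $\mathcal X_{n,k}(\omega)=n^{-1/2}\sum_{t=1}^n\langle X_t,v_k\rangle e^{-\ii t\omega}$. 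Because $\{d_n\}$ satisfies the hypotheses of \autoref{mainthm}, that theorem gives $\lambda_1^{-1}(M_n^{d_n}-b_q^{d_n})\convd\mathcal G$. Decomposing
\[
	\lambda_1^{-1}(M_n-b_q)=\lambda_1^{-1}(M_n^{d_n}-b_q^{d_n})+\lambda_1^{-1}(M_n-M_n^{d_n})+\lambda_1^{-1}(b_q^{d_n}-b_q),
\]
the conclusion then follows from Slutsky's theorem once we show (i) $R_n\convP 0$ and (ii) $b_q^{d_n}-b_q\to0$.

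Claim (ii) is immediate: with $b_q^d=\lambda_1\log(q\alpha_{1,d})$ and $\alpha_{1,d}=\prod_{j=2}^d(1-\lambda_j/\lambda_1)^{-1}$, the limit $b_q=\lim_{d\to\infty}b_q^d$ (see \autoref{munp}) replaces $\alpha_{1,d}$ by the convergent infinite product $\alpha_{1,\infty}$, so $b_q^{d_n}-b_q=-\lambda_1\log\prod_{j>d_n}(1-\lambda_j/\lambda_1)^{-1}\to0$ as $d_n\to\infty$, at a rate independent of $q$.

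For claim (i), a union bound followed by Markov's inequality applied to the $(r/2)$‑th power gives, for every $\epsilon>0$,
\[
	P(R_n>\epsilon)\le\sum_{j=1}^q P\bigl(\|\mathcal X_n(\omega_j)-\mathcal X_n^{d_n}(\omega_j)\|^2>\epsilon\bigr)\le\epsilon^{-r/2}\,q\,\max_{1\le j\le q}E\|\mathcal X_n(\omega_j)-\mathcal X_n^{d_n}(\omega_j)\|^r,
\]
so, since $q=O(n)$, it suffices to show $\max_jE\|\mathcal X_n(\omega_j)-\mathcal X_n^{d_n}(\omega_j)\|^r=o(1/n)$. I would estimate this moment by combining the triangle inequality in $L^{r/2}$ across the coordinates $k$ with a Rosenthal‑type bound for the discrete Fourier transform of the iid scalars $\langle X_t,v_k\rangle$, namely $E|\mathcal X_{n,k}(\omega_j)|^r\le C_r(\lambda_k^{r/2}+n^{1-r/2}E|\langle X_1,v_k\rangle|^r)$ uniformly in $j$; using also subadditivity of $x\mapsto x^{2/r}$ this yields
\[
	\max_jE\|\mathcal X_n(\omega_j)-\mathcal X_n^{d_n}(\omega_j)\|^r\le C_r\Bigl[\Bigl(\sum_{k>d_n}\lambda_k\Bigr)^{r/2}+n^{1-r/2}\Bigl(\sum_{k>d_n}\bigl(E|\langle X_1,v_k\rangle|^r\bigr)^{2/r}\Bigr)^{r/2}\Bigr].
\]
For the first bracket, Hölder's inequality against the weights $\ell_k$, together with $\sum_k\ell_k^{r/(r-2)}\le\sum_k\ell_k=1$, gives $\sum_{k>d_n}\lambda_k\le\bigl(\sum_{k>d_n}(\lambda_k/\ell_k)^{r/2}\bigr)^{2/r}$, so \eqref{condlemmMMb} makes it $o(1/n)$. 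The same Hölder step shows $\sum_k(E|\langle X_1,v_k\rangle|^r)^{2/r}\le\bigl(\sum_k\ell_k^{-r/2}E|\langle X_1,v_k\rangle|^r\bigr)^{2/r}$, which is finite by \eqref{uglygood}; hence the tail $\sum_{k>d_n}(E|\langle X_1,v_k\rangle|^r)^{2/r}\to0$, and since $r\ge4$ forces $n^{1-r/2}=O(1/n)$, the second bracket is $o(1/n)$ as well. Therefore $P(R_n>\epsilon)\to0$, proving (i) and the theorem.

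The step I expect to be the real obstacle is precisely this moment estimate: one must organise the truncation so that \eqref{condlemmMMb} controls the variance‑scale contribution of the tail and \eqref{uglygood} the heavy‑tail contribution, each at the sharp rate $o(1/n)$ needed to survive the union bound over the $q\asymp n$ Fourier frequencies; getting the Rosenthal constant and the interplay with the weights $\{\ell_k\}$ exactly right is where the bookkeeping lives.
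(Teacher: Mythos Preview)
Your proof is correct and follows the same overall architecture as the paper's: the same three-term decomposition, the same appeal to \autoref{mainthm} and \autoref{munp}, and the same reliance on Rosenthal's inequality to control the tail $R_n$. The only difference is in how the sum over coordinates $k>d_n$ is handled. You bound the full moment $E\delta_j^{r/2}$ via Minkowski in $L^{r/2}$ and then use H\"older against the weights $\ell_k$ to recover conditions \eqref{uglygood} and \eqref{condlemmMMb}. The paper instead inserts the weights earlier, through a weighted union bound
\[
P\Bigl(\sum_{k>d_n}|\mathcal X_{n,k}(\omega_j)|^2>a\Bigr)\le\sum_{k>d_n}P\bigl(|\mathcal X_{n,k}(\omega_j)|^2>a\ell_k\bigr)
\]
(using $\sum_k\ell_k=1$), after which Markov and Rosenthal are applied term by term in $k$ and the two assumptions drop out directly from the summation over $j$ and $k$. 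This sidesteps the Minkowski/H\"older bookkeeping you flagged as the obstacle, but the two routes are essentially equivalent and lead to the same bounds.
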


\begin{rmrk}
By our assumption $r/2-2\ge0$, and hence \eqref{uglygood} implies
\begin{equation}\label{uglygoodreal}
	\sum_{k>d_n} \ell_k^{-r/2} E|\langle X_1,v_k\rangle|^r =  o(n^{r/2-2}).
\end{equation}
We prove \autoref{fancythm} under weaker condition \eqref{uglygoodreal}.
\end{rmrk}

If $X_1$ is a Gaussian random element, then $E|  \langle X_1,v_k\rangle|^r= E|Z|^r\cdot\lambda_k^{r/2}$, where $Z\sim N(0,1)$ and hence condition \eqref{condlemmMMb} implies condition \eqref{uglygood}. While under Gaussianity such an equality holds for any $r>0$, we only need this condition for some fixed $r\geq 4$. To this end, we note that by the Karhunen-Lo\`eve expansion any random element $X_1$ in $H_0$ has the representation
\[
	X_1
	=\sum_{k\geq 1}\langle X_1,v_k\rangle v_k
	=\sum_{k\geq 1} \sqrt{\lambda_k}Z_kv_k,
\]
where $\{Z_k\}_{k\geq 1}$ is white noise with mean zero and unit variance. Since $\langle X_1,v_k\rangle= \sqrt{\lambda_k} Z_k$, the condition 
\begin{equation}\label{simpcond}
\sup_{k\geq 1}E|Z_k|^r=C<\infty,
\end{equation} 
provides the bound $E|\langle X_1,v_k\rangle|^r\leq C\lambda_k^{r/2}$ for all $k\geq 1$. Consequently, \eqref{condlemmMMb} together with \eqref{simpcond} imply \eqref{uglygood}.

Let us provide two examples where the conditions of \autoref{fancythm} are satisfied. We look at the settings where the  eigenvalues $\lambda_k$ decay exponentially or polynomially. For numerical sequences $\{\alpha_n\}_{n\ge1}$ and $\{\beta_n\}_{n\ge1}$ we write $\alpha_n=\Theta(\beta_n)$ as $n\to\infty$ if there exist $k>0$, $K>0$  and $N\ge1$ such that $k\beta_n\le \alpha_n\le K\beta_n$ for all $n>N$.
\begin{ex}
Suppose that $\operatorname E\|X_1\|^r<\infty$ with $r>6$ and $\lambda_k=\Theta(\rho^k)$ with $0<\rho<1$ as $k\to\infty$. Also, assume that \eqref{assLem5} as well as \eqref{simpcond} hold. We choose $d_n = \lfloor c\log(n)\rfloor$ with
\[
            \frac{2}{r\log(1/\rho)}\,<\,c \, < \, \frac1{3\log(1/\rho)}.
\]
Then
\[
	\frac{ d_n^4  }{\lambda_{d_n}^{1/2} } 
	=O(\log^4(n)n^{\frac{c}{2}\log(1/\rho)})
	=o\Bigl(\,\frac{n^{1/6}}{\log^{7/6}n}\,\Bigr)
\] 
as $n\to\infty $ if $c<(3\log(1/\rho))^{-1}$. This shows that \eqref{assmain} holds. We set $\ell_k=\epsilon(1-\epsilon)^{-1}(1-\epsilon)^k$ for some $\epsilon\in (0,1-\rho)$. Then \eqref{condlemmMMb} holds since
\[
	\sum_{k>d_n}(\lambda_k/\ell_k)^{r/2}
	=O((\rho/(1-\epsilon))^{rd_n/2})
	=O(n^{-\frac{rc}{2}\log((1-\epsilon)/\rho)})
	=o(1/n)
\]
as $n\to\infty$ whenever $c> 2/(r\log(1/\rho))$ and if $\epsilon$ is small enough. Hence the required conditions hold.
\end{ex}
\begin{ex}
Suppose that $\lambda_k=\Theta(k^{-\nu})$ with $\nu>1$ as $k\to\infty$.  Now choose some large enough $r>2/(\nu-1)$
 such that for some $\beta>0$ 
\begin{equation}\label{eq:beta}
	\frac{1}{(\nu-1)r/2 -1}
	<\beta<
	\min\Bigl\{\frac{1}{3(8+\nu)},\min_{k\geq 2}\frac1k\Bigl(\frac{\lambda_1}{\lambda_k}-1\Bigr),1\Bigr\}
\end{equation}
and assume that $\operatorname E\|X_1\|^r<\infty$.  Also, let us assume that \eqref{assLem5} as well as \eqref{simpcond} hold. Then we may set $d_n=\lfloor n^{\beta}\rfloor $ and verify condition~\eqref{assmain} so that \autoref{fancythm} is applicable. To this end we notice that
\[
	\frac{ d_n^4  }{\lambda_{d_n}^{1/2} }
	=O(n^{\beta(4+\nu/2)})
	=o\Bigl(\frac{n^{1/6}}{\log^{7/6}n}\Bigr)
\] 
as $n\to\infty$ since $\beta<(3(8+\nu))^{-1}$. For the second part of condition \eqref{assmain}, we require $\beta<\min\{\min_{k\geq 2}k^{-1}(\lambda_1/\lambda_k -1),1\}$. 

In order to verify \eqref{condlemmMMb} we choose $\ell_k$ proportional to $k^{-(1+\epsilon)}$. Then
\[
	\sum_{k>d_n}  (\lambda_k/\ell_k)^{r/2}
	=O\Bigl(\sum_{k>d_n} k^{\frac{r}{2}(-\nu+1+\epsilon)}\Bigr)
	=O(n^{\beta(\frac{r}{2}(1+\epsilon-\nu)+1)})
	=o(n^{-1})
\]
as $n\to\infty$ if $r>2/(\nu-1)$ and if $\beta>1/((\nu-1)r/2-1)$, provided that $\epsilon$ is chosen small enough. 
\end{ex}

\section{Extension to linear processes}\label{s:lin}
We consider an extension of our \autoref{thm:finite} and \autoref{fancythm} to linear processes. Suppose that $\{X_t\}_{t\in\mathbb Z}$ is a linear process given by
\begin{equation}\label{eq:linproc}
	X_t
	=\sum_{k=-\infty}^\infty a_k(\varepsilon_{t-k})
\end{equation}
for each $t\in\mathbb Z$, where $\{a_k\}_{k\in\mathbb Z}\subset \mathcal{L}(H_0)$ such that $\sum_{k=-\infty}^\infty\|a_k\|<\infty$ and $\{\varepsilon_t\}_{t\in\mathbb Z}$ are iid  $H_0$-valued random elements with zero means. We denote the DFT of $\varepsilon_1,\ldots,\varepsilon_n$ by
\[
	\mathcal E_n(\omega)
	=n^{-1/2}\sum_{t=1}^n\varepsilon_te^{-\ii t\omega}
\]
for $\omega\in[-\pi,\pi]$ and $n\ge1$. We also use the impulse-response operator $A(\omega)$ defined by
\begin{equation}\label{eq:A(omega))}
	A(\omega)
	=\sum_{k=-\infty}^\infty a_ke^{-\ii k\omega}
\end{equation}
 for $\omega\in[\pi,\pi]$.

The next lemma establishes a relationship between the DFT and the periodogram operator of $X_1,\ldots,X_n$ and the DFT and the periodogram operator of $\varepsilon_1,\ldots,\varepsilon_n$. Essentially, this is a generalization of Theorem~3 of \citet{walker:1965} to linear processes with values in separable Hilbert spaces.
\begin{lem}\label{lem:lp}
Suppose that $\{X_t\}_{t\in\mathbb Z}$ is given by \eqref{eq:linproc} and $\sum_{k\ne 0}\log(|k|)\|a_k\|<\infty$. Then
\begin{equation}\label{eq:lp_dft}
	\max_{1\le j\le q}\|\mathcal X_n(\omega_j)-A(\omega_j)\mathcal E_n(\omega_j)\|=o_P(\log^{-1/2}n)
\end{equation}
and
\begin{equation}\label{eq:lp_pdg}
	\max_{1\le j\le q}\|\mathcal X_n(\omega_j)\otimes\mathcal X_n(\omega_j)
	-A(\omega_j)\mathcal E_n(\omega_j)\otimes A(\omega_j)\mathcal E_n(\omega_j)\|
	=o_P(1)
\end{equation}
as $n\to\infty$, where $A(\omega)$ is given by \eqref{eq:A(omega))} for $\omega\in[\pi,\pi]$.
\end{lem}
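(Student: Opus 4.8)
The backbone of the argument is an exact algebraic identity for the remainder. Substituting $X_t=\sum_k a_k(\varepsilon_{t-k})$ into $\mathcal X_n(\omega_j)=n^{-1/2}\sum_{t=1}^nX_te^{-\ii t\omega_j}$ and interchanging the finite sum over $t$ with the sum over $k$ gives $\mathcal X_n(\omega_j)=\sum_k a_ke^{-\ii k\omega_j}\mathcal E_n^{(k)}(\omega_j)$, where $\mathcal E_n^{(k)}(\omega)=n^{-1/2}\sum_{m=1-k}^{n-k}\varepsilon_me^{-\ii m\omega}$ is the DFT computed over the block of $\varepsilon$'s shifted by $k$; subtracting $A(\omega_j)\mathcal E_n(\omega_j)=\sum_k a_ke^{-\ii k\omega_j}\mathcal E_n(\omega_j)$ yields
\[
	\mathcal X_n(\omega_j)-A(\omega_j)\mathcal E_n(\omega_j)=\sum_{k\neq0}a_ke^{-\ii k\omega_j}\bigl(\mathcal E_n^{(k)}(\omega_j)-\mathcal E_n(\omega_j)\bigr).
\]
Because $e^{-\ii m\omega_j}$ is $n$-periodic in $m$, each difference $\mathcal E_n^{(k)}(\omega_j)-\mathcal E_n(\omega_j)$ is an ``edge effect'' involving only $O(\min(|k|,n))$ of the $\varepsilon_t$ at the two ends of $\{1,\dots,n\}$; telescoping it over the increments $\mathcal E_n^{(l)}(\omega_j)-\mathcal E_n^{(l-1)}(\omega_j)=n^{-1/2}e^{\ii(l-1)\omega_j}(\varepsilon_{1-l}-\varepsilon_{n+1-l})$ and summing by parts in $k$ recasts the whole remainder as a single discrete Fourier transform $n^{-1/2}\sum_m c_m e^{-\ii m\omega_j}$ of an $H$-valued sequence whose entries $c_m$ are linear in the $\varepsilon_t$ with operator coefficients assembled from the $a_k$. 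This regrouping is what realizes the cancellation across different lags; estimating each lag separately is far too lossy and would force a stronger hypothesis of the type $\sum_k|k|^{1/2}\|a_k\|<\infty$ that appears in earlier treatments.

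With this in hand I would split the lag sum at a threshold $K_n$ growing like a small power of $n$. The tail $|k|>K_n$ amounts to the linear process $Y_t=\sum_{|k|>K_n}a_k(\varepsilon_{t-k})$ with small coefficient sum $\sum_{|k|>K_n}\|a_k\|$, and the relevant maxima ($\max_j\|\mathcal E_n(\omega_j)\|$ and the analogous maxima of the shifted DFTs) are of order $\log^{1/2}n$ under the moment assumptions on $\varepsilon_1$; since $\sum_{k>K_n}\log|k|\,\|a_k\|\to0$ one has $\sum_{|k|>K_n}\|a_k\|=o(1/\log n)$, so the tail contribution to the maximum is $o_P(\log^{-1/2}n)$ — this is precisely where the $\log$‑weighted summability enters and where the improvement over Theorem~3 of \citet{walker:1965} comes from. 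For the head $|k|\le K_n$ one is left with the Fourier transform $n^{-1/2}\sum_{m\le K_n}c_m e^{-\ii m\omega_j}$, whose maximum over the $q\sim n/2$ Fourier frequencies I would bound by a maximal inequality for Hilbert‑space‑valued discrete Fourier transforms of independent summands, in the spirit of the estimates behind Theorems~\ref{mainthm0}--\ref{fancythm}, exploiting that the $c_m$ have small second moments (controlled by $\sum_k\|a_k\|^2$ together with the truncation level), which forces this maximum to be $o_P(\log^{-1/2}n)$ as well.

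Once \eqref{eq:lp_dft} is established, \eqref{eq:lp_pdg} follows quickly. Writing $x=\mathcal X_n(\omega_j)$, $y=A(\omega_j)\mathcal E_n(\omega_j)$ and using $x\otimes x-y\otimes y=(x-y)\otimes x+y\otimes(x-y)$, so that $\|x\otimes x-y\otimes y\|_{\mathcal S}\le\|x-y\|(\|x\|+\|y\|)$, the left-hand side of \eqref{eq:lp_pdg} is at most $\bigl(\max_j\|\mathcal X_n(\omega_j)-A(\omega_j)\mathcal E_n(\omega_j)\|\bigr)\bigl(\max_j\|\mathcal X_n(\omega_j)\|+\max_j\|A(\omega_j)\mathcal E_n(\omega_j)\|\bigr)$. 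Here $\max_j\|A(\omega_j)\mathcal E_n(\omega_j)\|\le\bigl(\sum_k\|a_k\|\bigr)\max_j\|\mathcal E_n(\omega_j)\|=O_P(\log^{1/2}n)$, and then, by \eqref{eq:lp_dft}, $\max_j\|\mathcal X_n(\omega_j)\|=O_P(\log^{1/2}n)$ as well, so the product is $o_P(\log^{-1/2}n)\cdot O_P(\log^{1/2}n)=o_P(1)$.

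The main obstacle is the head estimate: controlling the maximum over the $q\sim n/2$ Fourier frequencies at the sharp rate $o_P(\log^{-1/2}n)$ rather than merely $o_P(1)$. A plain union bound over the frequencies together with polynomial moment bounds does not suffice, because the per‑frequency $L^2$‑size of the remainder decays only polylogarithmically in $n$, which cannot beat the number of frequencies; one must genuinely use the discrete‑Fourier structure — a Parseval/oversampling argument combined with a periodogram‑type concentration bound, or the Gaussian‑approximation machinery the paper already develops. A secondary technical point is that the summation‑by‑parts regrouping must be carried out on the truncated process, since the rearrangement that produces the regrouped Fourier transform is only conditionally convergent when $\sum_k|k|\,\|a_k\|=\infty$.
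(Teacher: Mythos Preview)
Your derivation of \eqref{eq:lp_pdg} from \eqref{eq:lp_dft} is correct and essentially matches the paper. Your approach to \eqref{eq:lp_dft}, however, departs from the paper and contains a gap at exactly the step you flag as the main obstacle. The paper does \emph{not} use summation by parts or a head/tail split at a threshold $K_n$; it estimates each lag separately. After the same identity $\mathcal X_n(\omega_j)-A(\omega_j)\mathcal E_n(\omega_j)=n^{-1/2}\sum_k a_k(\Delta_{nk}(\omega_j))e^{-\ii k\omega_j}$, it invokes \autoref{thm:Ologn} (a standalone maximal inequality for Fourier-type linear forms in Hilbert space, proved in the Appendix) to obtain $\operatorname E\max_{1\le j\le q}\|\Delta_{nk}(\omega_j)\|^2\ll |k|\log(2|k|)\wedge n\log(2n)$, using that $\Delta_{nk}$ has only $2(|k|\wedge n)$ summands. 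Summing $n^{-1/2}\|a_k\|\bigl(|k|\log(2|k|)\bigr)^{1/2}$ over $k$ and rewriting as $\log^{-1/2}(2n)\sum_k\bigl[\tfrac{|k|}{n}\tfrac{\log(2n)}{\log(2|k|)}\bigr]^{1/2}\log(2|k|)\|a_k\|$ yields $o(\log^{-1/2}n)$ by dominated convergence, since the bracket is bounded by~$1$ and tends to~$0$ for each fixed~$k$. Your assertion that lag-by-lag estimation ``is far too lossy and would force $\sum_k|k|^{1/2}\|a_k\|<\infty$'' is true only for the naive per-lag bound $O(|k|\log n)$; the improvement over Walker comes precisely from the sharper $O(|k|\log|k|)$ supplied by \autoref{thm:Ologn}, not from any regrouping across lags.

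Your alternative route---recasting the head as a single DFT $n^{-1/2}\sum_m c_m e^{-\ii m\omega_j}$ and appealing to an unspecified maximal inequality---is not completed. The $c_m$ you construct are not independent (each involves overlapping blocks of $\varepsilon$'s), so inequalities ``in the spirit of Theorems~\ref{mainthm0}--\ref{fancythm}'' do not apply directly. If one instead expresses the head via the independent increments $\eta_l=\varepsilon_{1-l}-\varepsilon_{n+1-l}$, the operator coefficients have norms $B_l=\sum_{k\ge l}\|a_k\|$, which under the hypothesis $\sum_k\log|k|\,\|a_k\|<\infty$ may decay only like $1/\log l$; then $\sum_{l\le K_n}B_l^2$ is not bounded, and neither a second-moment union bound nor the Gaussian-approximation machinery of the paper (which targets distributional closeness on sparsely convex sets, not $o_P$ rates for a single maximum) delivers the required $o_P(\log^{-1/2}n)$. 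The missing ingredient is exactly \autoref{thm:Ologn}, applied lag by lag rather than to a regrouped sum.
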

We note that we require a weaker summability condition than in \citet{walker:1965}, where it is assumed that $\sum_{k\ne 0}|k|^{1/2}\|a_k\|<\infty$.

\autoref{lem:lp} implies that
\begin{equation}\label{e:diffdft}
	\max_{1\le j\le q}\|\mathcal X_n(\omega_j)\|^2-\max_{1\le j\le q}\|A(\omega_j)\mathcal E_n(\omega_j)\|^2
	=o_P(1)
	\quad\text{as}\quad n\to\infty.
\end{equation}
With additional assumptions on $A(\omega)$ it is possible to establish the asymptotic distribution of  $\max_{1\le j\le q}\|A^{-1}(\omega_j)\mathcal X_n(\omega_j)\|^2$ from $\max_{1\le j\le q}\|\mathcal E_n(\omega_j)\|^2$.

\begin{lem}\label{lem:lpfilter}
Suppose that $\{X_t\}_{t\in\mathbb Z}$ is given by \eqref{eq:linproc}, $\sum_{k\ne0}\log(|k|)\|a_k\|<\infty$, $A^{-1}(\omega)$ exists for each $\omega\in[-\pi,\pi]$ and $\sup_{\omega\in[0,\pi]}\|A^{-1}(\omega)\|<\infty$, where $A(\omega)$ is given by \eqref{eq:A(omega))}. Then
\[
	\max_{1\le j\le q}\|A_n^{-1}(\omega_j)\mathcal X_n(\omega_j)\|^2-\max_{1\le j\le q}\|\mathcal E_n(\omega_j)\|^2
	=o_P(1)
	\quad\text{as}\quad n\to\infty.
\]
\end{lem}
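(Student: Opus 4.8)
The plan is to deduce \autoref{lem:lpfilter} from \autoref{lem:lp} by controlling the error incurred when we replace the empirical filter $A_n^{-1}(\omega_j)$ by the true filter $A^{-1}(\omega_j)$, and simultaneously replace $\mathcal{X}_n(\omega_j)$ by $A(\omega_j)\mathcal{E}_n(\omega_j)$. Here I understand $A_n(\omega)$ to be a natural finite-sample (e.g.\ tapered or truncated) estimator of the impulse-response operator whose sole relevant property will be a uniform consistency statement of the form $\max_{1\le j\le q}\|A_n^{-1}(\omega_j)-A^{-1}(\omega_j)\| = o_P(\log^{-1/2} n)$, which under $\sum_{k\ne 0}\log(|k|)\|a_k\|<\infty$ and $\sup_{\omega}\|A^{-1}(\omega)\|<\infty$ is exactly the strength one obtains from the classical Fourier-coefficient estimates à la \citet{walker:1965}. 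First I would write, for each $j$,
\[
	A_n^{-1}(\omega_j)\mathcal{X}_n(\omega_j) - \mathcal{E}_n(\omega_j)
	= \bigl(A_n^{-1}(\omega_j)-A^{-1}(\omega_j)\bigr)\mathcal{X}_n(\omega_j)
	+ A^{-1}(\omega_j)\bigl(\mathcal{X}_n(\omega_j)-A(\omega_j)\mathcal{E}_n(\omega_j)\bigr),
\]
and bound the norm by submultiplicativity of the operator norm. The second term is handled directly by \eqref{eq:lp_dft} together with $\sup_{\omega}\|A^{-1}(\omega)\|<\infty$, giving $o_P(\log^{-1/2} n)$ uniformly in $j$.

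Next I would control $\max_j\|\mathcal{X}_n(\omega_j)\|$. From \eqref{eq:lp_dft} and $\|A(\omega_j)\|\le \sum_k\|a_k\|<\infty$ it suffices to bound $\max_j\|\mathcal{E}_n(\omega_j)\|$. Since $\{\varepsilon_t\}$ is iid with finite second moment (this is built into \eqref{eq:linproc}), a standard argument — project onto finitely many principal components of $\mathrm{Var}(\varepsilon_1)$, use that each coordinate DFT is, up to lower-order terms, a maximum of $q$ exponential-type variables, and control the tail of the remaining coordinates via Markov's inequality on $\sum_{k>d}\lambda_k^{(\varepsilon)}$ — yields $\max_j\|\mathcal{E}_n(\omega_j)\|^2 = O_P(\log n)$, hence $\max_j\|\mathcal{E}_n(\omega_j)\| = O_P(\log^{1/2} n)$ and likewise for $\mathcal{X}_n(\omega_j)$. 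Multiplying by the $o_P(\log^{-1/2} n)$ rate of the filter error, the first term above is also $o_P(1)$ uniformly in $j$. Combining, $\max_j\|A_n^{-1}(\omega_j)\mathcal{X}_n(\omega_j)-\mathcal{E}_n(\omega_j)\| = o_P(1)$.

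Finally I would pass from the DFTs to their squared norms. Writing $a_j = \|A_n^{-1}(\omega_j)\mathcal{X}_n(\omega_j)\|$ and $b_j=\|\mathcal{E}_n(\omega_j)\|$, the reverse triangle inequality gives $|a_j-b_j|\le \|A_n^{-1}(\omega_j)\mathcal{X}_n(\omega_j)-\mathcal{E}_n(\omega_j)\|$, so $|a_j^2-b_j^2| = |a_j-b_j|(a_j+b_j) \le |a_j-b_j|\,(2b_j + |a_j-b_j|)$. Taking the maximum over $j$ and using $\max_j|a_j-b_j| = o_P(1)$ together with $\max_j b_j = O_P(\log^{1/2} n)$, we get $\max_j|a_j^2-b_j^2| = o_P(1)\cdot O_P(\log^{1/2}n) + o_P(1)$, which unfortunately is only $o_P(\log^{1/2} n)$, not $o_P(1)$. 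To close this gap I would sharpen the first term: bound $\max_j\|(A_n^{-1}(\omega_j)-A^{-1}(\omega_j))\mathcal{X}_n(\omega_j)\|\le \max_j\|A_n^{-1}(\omega_j)-A^{-1}(\omega_j)\|\cdot\max_j\|\mathcal{X}_n(\omega_j)\|$ and note that it is therefore $o_P(\log^{-1/2}n)\cdot O_P(\log^{1/2}n) = o_P(1)$, but when multiplied against the $O_P(\log^{1/2}n)$ factor $(a_j+b_j)$ one needs the product $\max_j\|A_n^{-1}(\omega_j)-A^{-1}(\omega_j)\|\cdot\max_j\|\mathcal{X}_n(\omega_j)\|^2 = o_P(1)$, i.e.\ a filter-error rate of genuine order $o_P(\log^{-1}n)$. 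I expect this — obtaining the estimator-consistency rate $o_P(\log^{-1}n)$ rather than merely $o_P(\log^{-1/2}n)$ — to be the main obstacle, and it is presumably where a more careful analysis of $A_n$ (or a restatement of the lemma with $A_n$ replaced by the deterministic $A$, in which case the first term vanishes and \eqref{eq:lp_dft} suffices directly) enters. Granting the appropriate rate, the three error contributions are each $o_P(1)$ after multiplication by the $O_P(\log^{1/2}n)$ sum factor, and the claimed convergence follows.
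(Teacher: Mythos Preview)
Your confusion stems from a typo in the statement: $A_n^{-1}(\omega_j)$ should simply be $A^{-1}(\omega_j)$, the deterministic impulse-response operator. No estimator is involved, and the paper's own proof works with $A^{-1}$ throughout. You actually flag this possibility yourself at the end (``a restatement of the lemma with $A_n$ replaced by the deterministic $A$''), and that is precisely the intended reading.

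Once the typo is corrected, your ``first term'' $(A_n^{-1}-A^{-1})\mathcal X_n$ disappears, and your argument collapses to exactly the paper's proof: write
\[
A^{-1}(\omega_j)\mathcal X_n(\omega_j)-\mathcal E_n(\omega_j)
=A^{-1}(\omega_j)\bigl(\mathcal X_n(\omega_j)-A(\omega_j)\mathcal E_n(\omega_j)\bigr),
\]
so $\max_j|a_j-b_j|\le\sup_\omega\|A^{-1}(\omega)\|\cdot o_P(\log^{-1/2}n)$ by \eqref{eq:lp_dft}, and then $|a_j^2-b_j^2|=|a_j-b_j|(a_j+b_j)$ together with $\max_j(a_j+b_j)=O_P(\log^{1/2}n)$ gives $o_P(1)$. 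There is no missing log factor and no need for any estimator-consistency rate.

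One minor point: your ``standard argument'' for $\max_j\|\mathcal E_n(\omega_j)\|^2=O_P(\log n)$ is a bit hand-wavy. The paper establishes this carefully as \autoref{thm:Ologn} in the appendix (via a dyadic truncation of the $\varepsilon_t$'s and a refined Pinelis-type exponential inequality), and then passes to $\mathcal X_n$ via \autoref{lem:boundtransfer}. Your PCA-projection sketch is plausible but not quite a proof; you should cite or reproduce a genuine moment bound here.
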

The following example illustrates that the assumptions of \autoref{lem:lpfilter} are satisfied by an FAR(1) model.
\begin{ex}
Consider an FAR(1) model given by
\[
	X_t
	=\rho(X_{t-1})+\varepsilon_t
	=\sum_{j=0}^\infty\rho^j(\varepsilon_{t-j})
\]
for $t\in\mathbb Z$ with $\rho\in \mathcal{L}(H_0)$ such that $\|\rho^{n_0}\|<1$ with some $n_0\ge1$ (see Chapter~3 of \citet{bosq:2000} for more details). Since $A(\omega)$ is a Neumann series for each $\omega\in[-\pi,\pi]$, we have that $A(\omega)=(I-e^{-\ii\omega}\rho)^{-1}$ and hence $A^{-1}(\omega)=I-e^{-\ii\omega}\rho$ exists for each $\omega\in[-\pi,\pi]$, and $\sup_{\omega\in[0,\pi]}\|A^{-1}(\omega)\|<\infty$.
\end{ex}

\autoref{lem:lpfilter} allows us to obtain the following theorem.
\begin{Theorem}\label{cor:lpinfty}
Suppose that $\{X_t\}_{t\in\mathbb Z}$ is given by \eqref{eq:linproc}, the assumptions of \autoref{lem:lpfilter} are satisfied and $\{\varepsilon_t\}_{t\in\mathbb Z}$ satisfy the assumptions of \autoref{fancythm}. Then
\[
	\lambda_1^{-1}\Bigl(\max_{1\le j\le q}\|A^{-1}(\omega_j)\mathcal X_n(\omega_j)\|^2-b_q\Bigr)\convd\mathcal G
	\quad\text{as}\quad n\to\infty.
\]
 The eigenvalues $\lambda_1$ and those in the definition of $b_n$ are the eigenvalues of the covariance operator $\operatorname{Var}(\varepsilon_0)$.
\end{Theorem}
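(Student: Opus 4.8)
The plan is to read off the statement from \autoref{lem:lpfilter} and \autoref{fancythm}, combined through Slutsky's theorem; no genuinely new estimate is needed.

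First I would observe that $\{\varepsilon_t\}_{t\in\mathbb Z}$ is itself a sequence of iid mean-zero $H_0$-valued random elements and that $\mathcal E_n(\omega)$ is precisely the DFT of $\varepsilon_1,\dots,\varepsilon_n$, so that $\max_{1\le j\le q}\|\mathcal E_n(\omega_j)\|^2$ is exactly the statistic $M_n$ associated with the sequence $\{\varepsilon_t\}$. Since by hypothesis $\{\varepsilon_t\}$ satisfies the assumptions of \autoref{fancythm}, and since the constants $\lambda_1$ and $b_q$ in that theorem are by construction formed from the eigenvalues of $\operatorname{Var}(\varepsilon_0)$, \autoref{fancythm} applies verbatim to $\{\varepsilon_t\}$ and yields
\begin{equation*}
	\lambda_1^{-1}\Bigl(\max_{1\le j\le q}\|\mathcal E_n(\omega_j)\|^2-b_q\Bigr)\convd\mathcal G
	\qquad\text{as } n\to\infty .
\end{equation*}

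Next, because the assumptions of \autoref{lem:lpfilter} are in force, that lemma gives
\begin{equation*}
	\max_{1\le j\le q}\|A^{-1}(\omega_j)\mathcal X_n(\omega_j)\|^2-\max_{1\le j\le q}\|\mathcal E_n(\omega_j)\|^2=o_P(1)
	\qquad\text{as } n\to\infty .
\end{equation*}
Since $\lambda_1>0$ is a fixed constant, multiplying this difference by $\lambda_1^{-1}$ leaves an $o_P(1)$ term, whence
\begin{equation*}
	\lambda_1^{-1}\Bigl(\max_{1\le j\le q}\|A^{-1}(\omega_j)\mathcal X_n(\omega_j)\|^2-b_q\Bigr)
	=\lambda_1^{-1}\Bigl(\max_{1\le j\le q}\|\mathcal E_n(\omega_j)\|^2-b_q\Bigr)+o_P(1).
\end{equation*}
Combining the last display with the Gumbel limit obtained above and applying Slutsky's theorem gives the asserted convergence in distribution.

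Thus the theorem has no hard core of its own: all the work is already done in \autoref{lem:lp} (which underlies \autoref{lem:lpfilter}) and in \autoref{fancythm}. The only points that deserve a moment's attention are that \autoref{lem:lpfilter} is phrased directly in terms of the difference of the \emph{maxima of the squared norms}, so one does not need a separate argument to descend from the DFT approximation \eqref{eq:lp_dft} to the statistic of interest, and that the centring and scaling constants in \autoref{fancythm}, when the theorem is invoked for the innovations, must be taken from $\operatorname{Var}(\varepsilon_0)$ and not from the covariance operator of $X_0$. Both are immediate from the formulations already given, so if any step is the main obstacle it is merely being careful that the hypotheses of the two preparatory results are genuinely available in the form used.
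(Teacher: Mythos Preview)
Your proposal is correct and matches the paper's approach exactly: the paper does not even write out a separate proof of this theorem, stating only that ``\autoref{lem:lpfilter} allows us to obtain the following theorem,'' which is precisely the Slutsky-type combination of \autoref{lem:lpfilter} and \autoref{fancythm} that you spell out.
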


If we restrict our attention to the multivariate case, i.e.\ $H_0=\mathbb R^d$, then we can standardize the covariance structure of $\{\varepsilon_t\}_{t\in\mathbb Z}$. We have the following result in the finite dimensional setting. We note that in the following theorem we do not require distinct eigenvalues of $\operatorname{Var}(\varepsilon_0)$ as long as they all are positive.
\begin{Theorem}\label{thm:mv}
Suppose that $H_0=\mathbb R^d$, $\{X_t\}_{t\in\mathbb Z}$ is given by \eqref{eq:linproc}, $\sum_{k\ne0}\log(|k|)\|a_k\|<\infty$, $A^{-1}(\omega)$ exists for each $\omega\in[-\pi,\pi]$ and $\sup_{\omega\in[0,\pi]}\|A^{-1}(\omega)\|<\infty$, where $A(\omega)$ is given by \eqref{eq:A(omega))} for $\omega\in[-\pi,\pi]$. Suppose that the covariance matrix $\Sigma:=\operatorname{Var}(\varepsilon_0)$ is positive definite. Then
\[
	\max_{1\le j\le q}\|B^{-1}(\omega_j)\mathcal X_n(\omega_j)\|^2-c_n
	\xrightarrow{d}\mathcal G
	\quad\text{as}\quad n\to\infty,
\]
where $B(\omega)=A(\omega)\Sigma^{1/2}$, $c_n$ is given by \eqref{eq:stand:c_n}.
\end{Theorem}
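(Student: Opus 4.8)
The plan is to repeat the argument behind \autoref{cor:lpinfty} almost verbatim, with \autoref{fancythm} replaced by \autoref{thm:finite}: in finite dimensions the covariance of the innovations can be standardised, and once this is done \autoref{lem:lpfilter} and \autoref{thm:finite} apply directly. I use, as in \autoref{cor:lpinfty}, a finite-order moment condition $\operatorname E\|\varepsilon_0\|^r<\infty$ for some $r>2$ on the innovations (needed so that \autoref{thm:finite} is applicable).

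\emph{Step 1 (standardise the innovations).} Put $\tilde\varepsilon_t:=\Sigma^{-1/2}\varepsilon_t$. These are iid, centred, with $\operatorname E[\tilde\varepsilon_0\tilde\varepsilon_0']=I_d$ and $\operatorname E\|\tilde\varepsilon_0\|^r<\infty$. Substituting $\varepsilon_t=\Sigma^{1/2}\tilde\varepsilon_t$ into \eqref{eq:linproc} exhibits $\{X_t\}$ as a linear process driven by $\{\tilde\varepsilon_t\}$ with coefficient operators $\tilde a_k:=a_k\Sigma^{1/2}$, whose impulse-response operator is $\sum_k\tilde a_ke^{-\ii k\omega}=A(\omega)\Sigma^{1/2}=B(\omega)$. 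Moreover $\sum_{k\ne0}\log(|k|)\|\tilde a_k\|\le\|\Sigma^{1/2}\|\sum_{k\ne0}\log(|k|)\|a_k\|<\infty$, the inverse $B^{-1}(\omega)=\Sigma^{-1/2}A^{-1}(\omega)$ exists for every $\omega\in[-\pi,\pi]$, and $\sup_{\omega\in[0,\pi]}\|B^{-1}(\omega)\|\le\|\Sigma^{-1/2}\|\sup_{\omega\in[0,\pi]}\|A^{-1}(\omega)\|<\infty$. Hence the representation $X_t=\sum_k\tilde a_k(\tilde\varepsilon_{t-k})$ satisfies all the hypotheses of \autoref{lem:lpfilter}.

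\emph{Step 2 (apply the two ingredients and combine).} Write $\tilde{\mathcal E}_n(\omega)=\Sigma^{-1/2}\mathcal E_n(\omega)$ for the DFT of $\{\tilde\varepsilon_t\}$. Applying \autoref{lem:lpfilter} to the representation of Step 1 yields
\[
\max_{1\le j\le q}\|B^{-1}(\omega_j)\mathcal X_n(\omega_j)\|^2-\max_{1\le j\le q}\|\tilde{\mathcal E}_n(\omega_j)\|^2=o_P(1)\qquad\text{as }n\to\infty,
\]
while \autoref{thm:finite} applied to the iid vectors $\{\tilde\varepsilon_t\}$ gives $\max_{1\le j\le q}\|\tilde{\mathcal E}_n(\omega_j)\|^2-c_q\xrightarrow{d}\mathcal G$, with $c_q$ given by \eqref{eq:stand:c_n}. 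Slutsky's theorem combines the two displays into $\max_{1\le j\le q}\|B^{-1}(\omega_j)\mathcal X_n(\omega_j)\|^2-c_q\xrightarrow{d}\mathcal G$, which is the claim.

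\emph{Main obstacle.} The argument is essentially bookkeeping on top of results already available, so there is no serious obstacle; the one genuinely substantive point is the reduction in Step 1, namely the verification that passing from $\varepsilon_t$ to $\tilde\varepsilon_t=\Sigma^{-1/2}\varepsilon_t$ turns $\{X_t\}$ into a linear process whose impulse-response operator is exactly $B(\omega)=A(\omega)\Sigma^{1/2}$ and which still satisfies the summability and invertibility conditions. If one prefers not to invoke \autoref{lem:lpfilter} as a black box, the alternative is to redo its proof in this setting: combine \eqref{eq:lp_dft} of \autoref{lem:lp} with the uniform bound $\sup_\omega\|B^{-1}(\omega)\|<\infty$ to get $\max_{1\le j\le q}\|B^{-1}(\omega_j)\mathcal X_n(\omega_j)-\tilde{\mathcal E}_n(\omega_j)\|=o_P(\log^{-1/2}n)$, and upgrade this to the statement for squared norms via $\bigl|\,\|a\|^2-\|b\|^2\,\bigr|\le\|a-b\|^2+2\|a-b\|\,\|b\|$ together with the a priori bound $\max_{1\le j\le q}\|\tilde{\mathcal E}_n(\omega_j)\|=O_P(\sqrt{\log n})$, which is immediate from the convergence supplied by \autoref{thm:finite}.
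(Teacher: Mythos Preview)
Your proof is correct and follows essentially the same route as the paper: standardise the innovations by $\tilde\varepsilon_t=\Sigma^{-1/2}\varepsilon_t$, reduce $\max_j\|B^{-1}(\omega_j)\mathcal X_n(\omega_j)\|^2$ to $\max_j\|\tilde{\mathcal E}_n(\omega_j)\|^2$ up to $o_P(1)$, and conclude via \autoref{thm:finite}. The only cosmetic difference is that you package the standardisation into new coefficients $\tilde a_k=a_k\Sigma^{1/2}$ and invoke \autoref{lem:lpfilter} as a black box, whereas the paper redoes the inequality from that lemma's proof inline with the extra factor $\|\Sigma^{-1/2}\|$---precisely the alternative you sketch in your last paragraph.
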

We conclude by remarking that the spectral density matrix can be expressed as
\begin{equation}\label{eq:sdensity}
	F(\omega)
	=A(\omega)\Sigma A^*(\omega)
	=B(\omega)B^*(\omega).
\end{equation}
for $\omega\in[-\pi,\pi]$. Hence, we have that
\[
	\|B^{-1}(\omega)\mathcal X_n(\omega)\|^2
	=\operatorname{Tr}[F^{-1}(\omega)[\mathcal X_n(\omega)\otimes\mathcal X_n(\omega)]]
\]
for $\omega\in[-\pi,\pi]$.

\section{Detecting periodic signals}\label{test}
In this section we discuss the application of our results to testing for hidden periodicities in functional time series. Our basic framework hence is the following:  assume that the sequence $\{Y_t\}_{t\in\mathbb Z}$ is given by
\begin{equation}\label{eq:model}
	Y_t
	=\mu+s(t)+X_t 
\end{equation}
for $t\in\mathbb Z$, where $\mu\in H_0$, $s:\mathbb Z\to H_0$ is a deterministic periodic function such that $s(t)=s(t+d)$ for all $t\in\mathbb Z$ with some $d\geq 2$ and $\sum_{t=1}^ds(t)=0$. We complement the recent results of \citet{hoermann:2018}, where such tests were developed when the length of the period $d$ is assumed to be known. In the following we do not assume that $d$ is known.
We investigate the subsequent testing problem:
\begin{equation}\label{test0}
	\mathcal{H}_0\colon \text{\eqref{eq:model} holds with $\|s(t)\|\equiv 0$}
	\quad\text{versus}\quad
	\mathcal{H}_1\colon \text{\eqref{eq:model} holds with $\|s(t)\|\not\equiv 0$.}
\end{equation}
The noise process $X_t$ can follow any of the different settings discussed in the present paper (multivariate, multivariate with increasing dimension, iid data, linear processes). Of course, every setting requires different---though conceptually similar---test statistics. To keep the paper streamlined we focus here on the infinite dimensional setting. In particular we are going to assume that $X_t$ is an  FAR(1) process $X_t=\rho(X_{t-1})+\varepsilon_t$. For this setup we will work out the details. With $\rho=0$ this includes the iid case, where we can actually relax \autoref{ass:rho} below, since we do not have to estimate $\rho$ then. The proofs of this section are given in Section~\ref{s:FAR1}.

Suppose for the moment that $\Sigma=\mathrm{Var}(\varepsilon_t)$ and $\rho$ are known.  Let $\lambda_j$ be the eigenvalues of $\Sigma$. Then, under $\mathcal{H}_0$ and suitable assumptions on the innovations $\varepsilon_t$, we get by \autoref{cor:lpinfty} that the test statistic 
\[
\lambda_1^{-1}\max_{1\le j\le q}\|(I-e^{-\ii\omega_j}\rho)\mathcal Y_n(\omega_j)\|^2-\log(q)+\sum_{j=2}^\infty\log(1-\lambda_j/\lambda_1)
\]
converges to the standard Gumble distribution. Here $\mathcal Y_n(\omega_j)$ denotes the discrete Fourier transform of $Y_1,\ldots,Y_n$ (note that under $\mathcal{H}_0$ we have $\mathcal Y_n(\omega_j)=\mathcal X_n(\omega_j)$ for all $1\leq j\leq q$.) In practice, we need to replace $\rho$ and $\lambda_j$ by estimators to get a valid test statistic. We will impose the following assumption. 
\begin{assumption}\label{ass:rho}
Suppose that $\widehat\rho$\, is an estimator of $\rho$, with
$
\|\widehat\rho-\rho\|=o_P(a_n^{-1})$, where $\log n\leq a_n\leq \sqrt{n}$. Assume $\|\rho\|<1$. Assume moreover, that the innovations $\varepsilon_t$ satisfy the assumptions of \autoref{fancythm}. Finally we suppose that $\mu=0$.
\end{assumption}

\autoref{ass:rho} contains the basic assumptions on the innovations which we require in the iid case to apply our theorems. In addition we need a consistent estimator for $\rho$, which is, for example, established in \citet{bosq:2000} or \citet{hk:2015}. Rates of convergence can be found  in \citet{guillas:2001}. The requirement $\|\rho\|<1$ assures that the corresponding FAR(1) process is stationary. Assuming $\mu=0$ is a simplification. Otherwise we center the data by the sample mean. A constant shift does not alter $\mathcal Y_n(\omega_j)$ for $j=1,\ldots, q$.

\begin{Theorem}\label{th:appl}
 Define $\hat\lambda_j$ to be the eigenvalues of $\frac{1}{n-1}\sum_{k=2}^n\hat\varepsilon_k\otimes\hat\varepsilon_k$, where 
$$
\hat\varepsilon_k=X_k-\widehat\rho\,(X_{k-1}), \quad k=2,\ldots,n.
$$
Under $\mathcal{H}_0$ and \autoref{ass:rho}, we have that
\[
T_n\coloneqq\hat\lambda_1^{-1}\max_{1\le j\le q}\|(I-e^{-\ii\omega_j}\widehat\rho\,)\mathcal Y_n(\omega_j)\|^2-\log(q)+\sum_{j=2}^{a_n}\log(1-\hat\lambda_j/\hat\lambda_1)\convd\mathcal G
\quad\text{as}\quad n\to\infty.
\]
\end{Theorem}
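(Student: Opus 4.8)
The plan is to reduce $T_n$ to the idealized statistic of \autoref{cor:lpinfty} by controlling the four discrepancies introduced through estimation: replacing $\rho$ by $\widehat\rho$ in the filtered DFT, replacing $\lambda_1$ by $\widehat\lambda_1$ in the scaling, replacing the infinite centering correction $\sum_{j\ge2}\log(1-\lambda_j/\lambda_1)$ by its estimated truncation $\sum_{j=2}^{a_n}\log(1-\widehat\lambda_j/\widehat\lambda_1)$, and observing that under $\mathcal H_0$ one has $\mathcal Y_n(\omega_j)=\mathcal X_n(\omega_j)$ for $1\le j\le q$. By \autoref{cor:lpinfty} (applied to the FAR(1) noise, whose impulse-response operator is $A(\omega)=(I-e^{-\ii\omega}\rho)^{-1}$, so $A^{-1}(\omega)=I-e^{-\ii\omega}\rho$), we already know
\[
\lambda_1^{-1}\Bigl(\max_{1\le j\le q}\|(I-e^{-\ii\omega_j}\rho)\mathcal X_n(\omega_j)\|^2-b_q\Bigr)\convd\mathcal G,
\]
with $b_q=\log q-\sum_{j\ge2}\log(1-\lambda_j/\lambda_1)$. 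So it suffices to show that $T_n$ minus this quantity is $o_P(1)$, and then apply Slutsky.

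First I would handle the filter discrepancy. Writing $\widehat A^{-1}(\omega_j)=I-e^{-\ii\omega_j}\widehat\rho$ and $A^{-1}(\omega_j)=I-e^{-\ii\omega_j}\rho$, we have $\widehat A^{-1}(\omega_j)-A^{-1}(\omega_j)=-e^{-\ii\omega_j}(\widehat\rho-\rho)$, an operator of norm $o_P(a_n^{-1})$ uniformly in $j$ by \autoref{ass:rho}. Hence
\[
\max_j\bigl|\,\|\widehat A^{-1}(\omega_j)\mathcal X_n(\omega_j)\|^2-\|A^{-1}(\omega_j)\mathcal X_n(\omega_j)\|^2\,\bigr|
\le o_P(a_n^{-1})\,\max_j\|\mathcal X_n(\omega_j)\|^2\cdot\bigl(\text{lower-order cross terms}\bigr).
\]
Since by \autoref{cor:lpinfty} (or \autoref{fancythm} with $\rho=0$, via \eqref{e:diffdft}) $\max_j\|A^{-1}(\omega_j)\mathcal X_n(\omega_j)\|^2=O_P(\log q)$, the same order holds for $\max_j\|\mathcal X_n(\omega_j)\|^2$ up to the bounded operator norms $\sup_\omega\|A(\omega)\|$, $\sup_\omega\|A^{-1}(\omega)\|$; the cross term is bounded by $2\sqrt{\max_j\|\mathcal X_n(\omega_j)\|^2}\cdot o_P(a_n^{-1})\sqrt{\max_j\|\mathcal X_n(\omega_j)\|^2}$, so the whole filter discrepancy is $o_P(a_n^{-1}\log q)=o_P(1)$ because $a_n\ge\log n$ and $\log q\sim\log n$. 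Next, the residuals $\widehat\varepsilon_k=X_k-\widehat\rho(X_{k-1})=\varepsilon_k+(\rho-\widehat\rho)(X_{k-1})$ differ from the true innovations by an $o_P(a_n^{-1})$-perturbation, so the empirical operator $\frac1{n-1}\sum_k\widehat\varepsilon_k\otimes\widehat\varepsilon_k$ converges in Hilbert–Schmidt norm to $\operatorname{Var}(\varepsilon_0)$; by the standard perturbation bounds for eigenvalues of self-adjoint compact operators (Weyl/Lidskii inequalities) this gives $\widehat\lambda_j\convP\lambda_j$ for each fixed $j$, and in particular $\widehat\lambda_1^{-1}=\lambda_1^{-1}(1+o_P(1))$. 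Combined with the $O_P(\log q)$ bound on the maximum, swapping $\widehat\lambda_1$ for $\lambda_1$ in the leading term changes $T_n$ by $o_P(1)\cdot O_P(\log q)$ — this is the place where a rate rather than mere consistency is needed, and it is the step I expect to be the main obstacle: one must verify that $\widehat\lambda_1-\lambda_1=o_P(1/\log n)$, which follows from $\|\widehat\rho-\rho\|=o_P(a_n^{-1})$ with $a_n\ge\log n$ together with a moment bound on $\|X_{k-1}\|$ (available since the FAR(1) process is stationary with finite fourth moment), pushing the empirical-covariance perturbation down to $o_P(1/\log n)$.

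Finally I would control the centering correction. The tail $\sum_{j>a_n}\log(1-\lambda_j/\lambda_1)$ is $o(1)$ because $\sum_j\lambda_j<\infty$ forces $\lambda_j/\lambda_1\to0$ fast enough that $\sum_j|\log(1-\lambda_j/\lambda_1)|<\infty$ (here one uses $a_n\to\infty$); this replaces the infinite sum by its partial sum up to $a_n$ at cost $o(1)$. For the remaining finite sum $\sum_{j=2}^{a_n}[\log(1-\widehat\lambda_j/\widehat\lambda_1)-\log(1-\lambda_j/\lambda_1)]$ one needs uniform control of the eigenvalue perturbations over the growing range $j\le a_n$: using the operator-norm bound $\max_j|\widehat\lambda_j-\lambda_j|\le\|\frac1{n-1}\sum_k\widehat\varepsilon_k\otimes\widehat\varepsilon_k-\operatorname{Var}(\varepsilon_0)\|_{\mathcal S}=o_P(a_n^{-1})$ (by the rate argument above, sharpened so the bound beats $a_n^{-1}$), and the fact that $1-\lambda_j/\lambda_1$ is bounded away from $0$ uniformly in $j$ (since $\lambda_j/\lambda_1\le\lambda_2/\lambda_1<1$ under \autoref{ass:eigen} and monotonicity), the derivative of $\log$ is bounded there, so each summand is $O_P(|\widehat\lambda_j-\lambda_j|+|\widehat\lambda_1-\lambda_1|)$ and the whole sum is $a_n\cdot O_P(o_P(a_n^{-1}))=o_P(1)$. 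Assembling the three $o_P(1)$ bounds and invoking Slutsky's theorem with \autoref{cor:lpinfty} then yields $T_n\convd\mathcal G$. The only genuinely delicate point, as noted, is securing an eigenvalue perturbation rate of order $o_P(a_n^{-1})$ uniformly, rather than pointwise consistency; this should come out of writing $\widehat\varepsilon_k\otimes\widehat\varepsilon_k-\varepsilon_k\otimes\varepsilon_k$ explicitly, bounding its average Hilbert–Schmidt norm by $o_P(a_n^{-1})\cdot(\text{averages of }\|X_{k-1}\|^2,\ \|X_{k-1}\|\|\varepsilon_k\|)=o_P(a_n^{-1})$ via stationarity and the law of large numbers, and adding the $o_P(n^{-1/2})=o_P(a_n^{-1})$ fluctuation of $\frac1{n-1}\sum_k\varepsilon_k\otimes\varepsilon_k$ around $\operatorname{Var}(\varepsilon_0)$.
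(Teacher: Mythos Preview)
Your proposal is correct and follows essentially the same approach as the paper: the paper likewise reduces to \autoref{cor:lpinfty} via three lemmas establishing exactly the discrepancies you isolate, namely (i) $\bigl|\max_j\|(I-e^{-\ii\omega_j}\widehat\rho)\mathcal X_n(\omega_j)\|^2-\max_j\|(I-e^{-\ii\omega_j}\rho)\mathcal X_n(\omega_j)\|^2\bigr|=o_P(\log n/a_n)$, (ii) $\max_{j\ge1}|\hat\lambda_j-\lambda_j|=o_P(a_n^{-1})$ via Weyl's inequality and the decomposition of $\hat\varepsilon_k\otimes\hat\varepsilon_k-\varepsilon_k\otimes\varepsilon_k$ you describe, and (iii) $\sum_{j\ge2}\log(1-\lambda_j/\lambda_1)-\sum_{j=2}^{a_n}\log(1-\hat\lambda_j/\hat\lambda_1)=o_P(1)$ by the mean-value argument with the uniform lower bound on $1-\lambda_j/\lambda_1$. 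The rate $o_P(a_n^{-1})$ you flag as the delicate point is indeed the crux, and your route to it (LLN/ergodic theorem for $\frac1n\sum\|X_{t-1}\|^2$, fourth-moment CLT rate $O_P(n^{-1/2})\le o_P(a_n^{-1})$ for the true empirical covariance) matches the paper's.
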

\begin{rmrk}
In \autoref{th:appl} the truncation parameter $a_n$ in the centering constant can be replaced by any $b_n\leq a_n$ with $b_n\to\infty$.
\end{rmrk}
Our next result establishes consistency of our test statistic when $\mathcal{H}_0$ is violated. We assume that there exists $d\geq 2$ such that $s(t)=s(t+d)$ and $\|s(t)\|\not\equiv 0$.  In the formulation of the theorem below, we allow $d$ and $s(t)$ to be dependent on $n$.

\begin{Theorem}\label{th:applconsist}
Consider the assumptions of \autoref{th:appl}, but assume now that $\mathcal{H}_0$ doesn't hold. Suppose that $\max_{1\leq t\leq d}\|s(t)\|=O(1)$ and
\begin{equation}\label{e:applcons}
\frac{\sqrt{n}}{d^2}\to\infty\quad \text{and}\quad\psi_n\coloneqq\biggl\|\sum_{t=1}^d s(t)e^{-\ii \frac{2\pi}{d} t}\biggr\|\frac{1}{d^2}\sqrt{\frac{n}{\log n}}\to\infty. 
\end{equation}
Suppose moreover, that $\widehat\rho\convP \rho^\prime$, with $\|\rho^\prime\|<1$ and $\hat\lambda_j\convP \lambda_j^\prime$, with $\sum_{j\geq 1}\lambda^\prime_j<\infty$. Then we have $T_n\convP\infty$ as $n\to\infty$.
\end{Theorem}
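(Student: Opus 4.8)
The plan is to single out, among the fundamental frequencies $\omega_1,\dots,\omega_q$, the one nearest to $2\pi/d$, and to show that at that frequency the periodic part of the data forces the maximised quantity in $T_n$ to diverge strictly faster than the centring $\log q$, while the stochastic fluctuations and the plug-in errors in $\widehat\rho$ and the $\hat\lambda_j$ are asymptotically negligible by comparison. Since $\mu=0$ and $\sum_{t=1}^n e^{-\ii t\omega_j}=0$ for every fundamental frequency, I would first write $\mathcal Y_n(\omega_j)=\mathcal S_n(\omega_j)+\mathcal X_n(\omega_j)$ with $\mathcal S_n(\omega)\coloneqq n^{-1/2}\sum_{t=1}^n s(t)e^{-\ii t\omega}$, and let $j_n\in\{1,\dots,q\}$ be the integer nearest to $n/d$ (well defined for large $n$ since $\sqrt n/d^2\to\infty$ forces $d=o(\sqrt n)$), so that $|\omega_{j_n}-2\pi/d|\le\pi/n$. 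Writing $n=md+r$ with $m=\lfloor n/d\rfloor$ and $0\le r<d$, periodicity of $s$ gives
\[
\sqrt n\,\mathcal S_n(\omega_{j_n})=\Bigl(\textstyle\sum_{l=0}^{m-1}e^{-\ii l d\omega_{j_n}}\Bigr)\Bigl(\textstyle\sum_{t=1}^{d}s(t)e^{-\ii t\omega_{j_n}}\Bigr)+R_n,\qquad \|R_n\|\le(d-1)\max_{1\le t\le d}\|s(t)\|=O(d).
\]

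The heart of the argument is the deterministic lower bound $\|\mathcal S_n(\omega_{j_n})\|^{2}\ge c_1\,\psi_n^{2}d^{2}\log n$, valid for all large $n$ with some $c_1>0$. Setting $\theta_n\coloneqq d\omega_{j_n}-2\pi$ one has $|\theta_n|\le\pi d/n$ and $m|\theta_n|/2\le\pi/2$, so the Dirichlet-kernel identity together with $\sin x\ge (2/\pi)x$ on $[0,\pi/2]$ yields $\bigl|\sum_{l=0}^{m-1}e^{-\ii l d\omega_{j_n}}\bigr|=\bigl|\sum_{l=0}^{m-1}e^{-\ii l\theta_n}\bigr|\ge 2m/\pi\ge n/(\pi d)$ for $n$ large; Lipschitz continuity of $\omega\mapsto e^{-\ii t\omega}$ and $|\omega_{j_n}-2\pi/d|\le\pi/n$ give $\bigl\|\sum_{t=1}^{d}s(t)e^{-\ii t\omega_{j_n}}-c\bigr\|=O(d^{2}/n)$, where $c\coloneqq\sum_{t=1}^{d}s(t)e^{-\ii\frac{2\pi}{d}t}$ and $\|c\|=\psi_n d^{2}\sqrt{\log n/n}$ by the definition of $\psi_n$. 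Combining the three estimates, and using $\psi_n\to\infty$ to absorb the two $O(d)$ error terms, I get $\sqrt n\,\|\mathcal S_n(\omega_{j_n})\|\ge(2\pi)^{-1}\psi_n d\sqrt{n\log n}$ for $n$ large, i.e.\ $\|\mathcal S_n(\omega_{j_n})\|^{2}\ge(2\pi)^{-2}\psi_n^{2}d^{2}\log n$.

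Next I would show the noise and the plug-in errors do not spoil this. From \autoref{cor:lpinfty} applied to the FAR(1) process $X_t$ together with $\sup_{\omega}\|(I-e^{-\ii\omega}\rho)^{-1}\|<\infty$ we get $\|\mathcal X_n(\omega_{j_n})\|^{2}\le\max_{1\le j\le q}\|\mathcal X_n(\omega_j)\|^{2}=O_P(\log n)$; and since $\widehat\rho\convP\rho'$ with $\|\rho'\|<1$, on an event of probability tending to one $\|\widehat\rho\|\le 1-\kappa$ for some $\kappa\in(0,1)$, whence $\kappa\|x\|\le\|(I-e^{-\ii\omega}\widehat\rho)x\|\le 2\|x\|$ for all $\omega$ and all $x$. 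Consequently, on that event,
\[
\|(I-e^{-\ii\omega_{j_n}}\widehat\rho)\mathcal Y_n(\omega_{j_n})\|\ge\kappa\|\mathcal S_n(\omega_{j_n})\|-2\|\mathcal X_n(\omega_{j_n})\|\ge\tfrac{\kappa}{4\pi}\,\psi_n d\sqrt{\log n}
\]
for $n$ large (using $\psi_n d\to\infty$ to dominate the $O_P(\sqrt{\log n})$ term), so $\max_{1\le j\le q}\|(I-e^{-\ii\omega_j}\widehat\rho)\mathcal Y_n(\omega_j)\|^{2}\ge c_2\,\psi_n^{2}d^{2}\log n$ with probability tending to one, $c_2>0$. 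Finally $\hat\lambda_1\convP\lambda_1'>0$, $-\log q=-\log n+O(1)$, and — by repeating the part of the proof of \autoref{th:appl} that handles the centring, now with $\hat\lambda_j\convP\lambda_j'$ and $\sum_j\lambda_j'<\infty$ in place of the true eigenvalues — one gets $\sum_{j=2}^{a_n}\log(1-\hat\lambda_j/\hat\lambda_1)=\sum_{j\ge2}\log(1-\lambda_j'/\lambda_1')+o_P(1)=O_P(1)$. Assembling, $T_n\ge c_3\,\psi_n^{2}d^{2}\log n-\log n+O_P(1)$ with $c_3>0$, which tends to $+\infty$ in probability since $\psi_n^{2}d^{2}\to\infty$ (indeed already $\psi_n\to\infty$ and $d\ge2$).

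I expect the chief obstacle to be the deterministic lower bound of the second paragraph: because $n/d$ need not be an integer, the partial geometric sum $\sum_{l=0}^{m-1}e^{-\ii l d\omega_{j_n}}$ must be bounded below by a Dirichlet-kernel estimate that stays uniform as $d=d_n$ varies, and one has to track $d$, $n$, $\psi_n$ and $\max_t\|s(t)\|$ carefully enough that $\psi_n^{2}d^{2}\log n$ genuinely swamps the $O(d)$ remainder (where $\psi_n\to\infty$ enters) and the $\log q$ centring (where $\psi_n^{2}d^{2}\to\infty$ enters); the condition $\sqrt n/d^{2}\to\infty$ in \eqref{e:applcons} is precisely what keeps $d$ small enough for these comparisons and for $j_n$ to lie in $\{1,\dots,q\}$. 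A secondary, essentially bookkeeping, point is transferring the control of the centring sum $\sum_{j=2}^{a_n}\log(1-\hat\lambda_j/\hat\lambda_1)$ from \autoref{th:appl} to the present situation, which uses $\hat\lambda_j\convP\lambda_j'$, $\sum_j\lambda_j'<\infty$, and (implicitly) $\lambda_1'>\lambda_2'$.
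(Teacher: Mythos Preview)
Your proposal is correct and follows essentially the same route as the paper's proof: pick the fundamental frequency closest to $2\pi/d$, use a Dirichlet-kernel lower bound to show that the deterministic periodic part $\mathcal S_n$ at that frequency is of order at least $\psi_n\sqrt{\log n}$, control the noise contribution $\mathcal X_n$ by $O_P(\sqrt{\log n})$ via the linear-process machinery, and then use $\widehat\rho\convP\rho'$ with $\|\rho'\|<1$ to transfer these bounds through the filter $I-e^{-\ii\omega}\widehat\rho$.

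The differences are cosmetic but worth noting. You take $j_n$ to be the \emph{nearest} integer to $n/d$, whereas the paper takes $N=\lfloor n/d\rfloor$; your choice gives the tighter phase bound $|\theta_n|\le\pi d/n$, which keeps $m|\theta_n|/2\le\pi/2$ and makes the Dirichlet estimate $|\sum_{l=0}^{m-1}e^{-\ii l\theta_n}|\ge 2m/\pi$ immediate. The paper has to work a bit harder at this step (its phase $N\pi r/n$ need not stay in $[0,\pi/2]$) and ends up with the weaker bound $N/(2d)$, whence the paper's lower bound on $\|\mathcal S_n(\hat\omega)\|$ is only $\tfrac14\psi_n\sqrt{\log n}$ rather than your $c\,\psi_n d\sqrt{\log n}$. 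Both suffice, since already $\psi_n\to\infty$ beats the $\log q$ centring. Your handling of the centring sum $\sum_{j=2}^{a_n}\log(1-\hat\lambda_j/\hat\lambda_1)$ is also a touch more careful than the paper's: you explicitly flag the implicit requirement $\lambda_1'>\lambda_2'$ (and $\lambda_1'>0$) needed for this sum to stay $O_P(1)$, whereas the paper simply asserts that convergence of the $\hat\lambda_j$ forces $T_n$ to diverge.
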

Condition \eqref{e:applcons} is a technical condition which is fairly mild and which assures that the periodic signal is strong enough to be picked up by the Fourier transform. The assumptions on $\widehat\rho\,$ and $\hat\lambda_j$ are needed because the violation of $\mathcal{H}_0$ implies that our process $\{Y_t\}_{t\in\mathbb{Z}}$ is not stationary. Therefore the estimator for $\rho$---neglecting the underlying periodic signal---is in general not consistent. When the length of the period is known, then the estimator can be adapted to remain consistent under the alternative. Here we do not assume that $d$ is known and hence we use the same estimator for $\rho$ as in the stationary case. To work out the asymptotics of the estimator under the alternative is beyond the scope of this paper and hence is phrased as an assumption.

\section{Empirical study}\label{s:empirical}
In this section we compare the asymptotic theory developed in this paper to the finite sample behaviour of the statistic $T_n$ from Section~\ref{test}.
To this end we organize a simulation study which we describe now in detail. The first step is to generate suitable data. 

\subsection{Generating functional time series}\label{s:datagen}
The target in a simulation is to generate synthetic data, so that we have control over the data generating process (DGP). Often, however, we find the available simulation settings for functional data rather unrealistic. We want to explain here a setting which allows to generate synthetic and at the same time realistic data. To this end we use as our basic building block a real data set which we are well familiar with and which we have used as a toy data set in different papers, namely \texttt{PM10} curves in Graz, Austria. \texttt{PM10} is measured in $\mu g/m^3$ and describes the amount of particles with a diameter of less the 10$\mu m$ in 1 cubic-meter of air. Specifically, our data set consists of 182 observation days in the winter season 2010/2011 (October--March). The data are recorded in 30 minutes intervals, resulting in 48 observations per day. We have removed the week around New Year's Eve because of high outlying observations due to fireworks, leaving 175 days.  In the data preprocessing we have also removed a potential weekday effect, by centering the data with corresponding weekday averages.  To account for heavy tails, we have done a square-root transformation, i.e.\ we look at $\sqrt{\texttt{PM10}}$. The preprocessed data are than transformed to functional data by a basis function approach, see \citet{ramsay09}. We use the \texttt{R}-package \texttt{fda} and the command \texttt{Data2fd} with 21 Fourier basis functions. To  the resulting functional time series $Z_1,\ldots, Z_{175}$ we fit an FAR(1) model $Z_t=\psi(Z_{t-1})+e_t$. The estimator $\widehat\psi$ is a PCA based estimator defined as in \citet{bosq:2000}, p.~218. 
We set the tuning parameter $k_n=8$. This parameter determines the number of principal components to use for the estimator. In our example 8 principal components are needed to explain more than $99\%$ of the variability in the data. 
In general, a linear operator $\psi$ on the function space $L^2$ can be represented in the form $\psi=\sum_{i,j\geq 1}\psi_{i,j}v_i\otimes v_j$ where $\{v_i\colon i\geq 1\}$ are  the Fourier basis functions. Hence $\psi$ is equivalent to an infinite dimensional correspondance matrix $\Psi=((\psi_{ij}))$. In our case, since we use 21 Fourier basis functions to expand the data, $\widehat\psi$ corresponds to a $21\times 21$ matrix $\widehat\Psi$. In \autoref{fig:psi} we show the $9\times 9$ sub-matrix representing the upper left corner of $\widehat\Psi$. This $\widehat\Psi$ is close to an upper triangular matrix. It is very different from common settings where mainly diagonal or symmetric matrices are used.

\begin{figure}[h]
\captionsetup{width=0.8\textwidth}
\centering
\includegraphics[width=10cm]{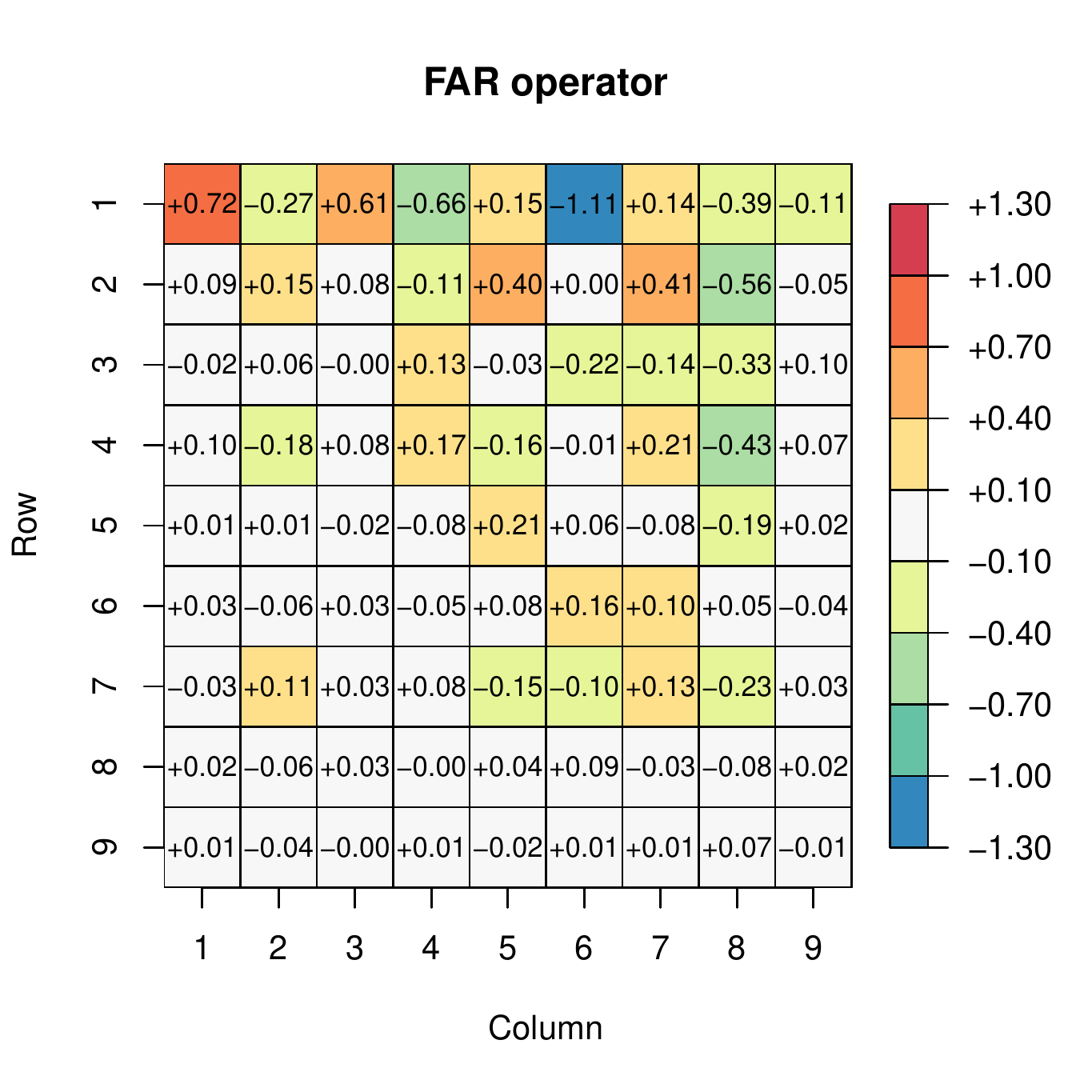}
\caption{The coefficient matrix (upper $9\times 9$ elements)  of the FAR(1) operator estimated for the \texttt{PM10} sample and used for our DGP.}
\label{fig:psi}
\end{figure}

Now we start with the actual generation of our synthetic data. To this end we compute the residuals $\hat e_t=Z_t-\widehat\psi\,(Z_{t-1})$, $2\leq t\leq 175$ and generate a functional time series $X_t=\rho(X_{t-1})+\varepsilon_t$, using $\rho=\widehat \psi$, and $\varepsilon_0,\ldots,\varepsilon_n$ being an iid bootstrap sample of size $n$ from $\hat e_2,\ldots, \hat e_{175}$. We use $X_0=\varepsilon_0$. Our construction assures that we get a functional time series which is stationary and behaves similarly as the original \texttt{PM10} data.

\subsection{Simulation setting}
The core algorithm for our simulations can be described as follows:\medskip

\noindent
\underline{{\bf Simulation algorithm:}}

\begin{enumerate}
\item Generate $n$ data from the FAR(1) process $X_t=\rho(X_{t-1})+\varepsilon_t$.
\item Generate  a $d$-periodic signal $s(t)$ and define $Y_t=s(t)+X_t$. 
\item Estimate the auto-regression operator $\rho$.
\item Calculate the residuals $\hat\varepsilon_t=X_t-\widehat \rho\,(X_{t-1})$.
\item Using $\hat\varepsilon_t$ compute estimates $\hat\lambda_j$ for the eigenvalues of $\Sigma=\mathrm{Var}(\varepsilon_0)$.
\item Compute $T_n$ and then $\delta:=I_{\{T_n>q_{1-\alpha}\}}$, where $q_{1-\alpha}=\mathcal{G}^{-1}(1-\alpha)$ and $I_A$ is the indicator function on $A$.
\item Repeat Steps 1--6 $2000$ times independently to obtain $\delta_1,\ldots, \delta_{2000}$ and calculate the empirical rejection rate $\hat r:=\mathrm{av}(\delta_i\colon 1\leq i\leq 2000)$.
\end{enumerate}

Step~1 was outlined in Section~\ref{s:datagen}. For the sample sizes we use $n=100,250,500$. The periodic signal in Step~2 we define as $s(t,u)=s(t)=a\cos(2\pi t/d)$, where $d-2$ is a Poisson-distributed random variable $P_\lambda$ with $\lambda=5$ and $\lambda=15$. (Note that we guarantee $d\geq 2$.)
For $a$ we investiage the values $a=0,1,2$. Clearly, $a=0$ corresponds to $\mathcal{H}_0$. In Step~3 we estimate $\rho$ using the estimator outlined in Section~\ref{s:datagen}, again with $k_n$ such that we explain more than $99\%$ of the variance in our sample. In Step~6 we need to choose $a_n$. We use $a_n=\text{argmin}_{j\geq 1}\{-\log(1-\hat\lambda_j/\hat\lambda_1)\leq 0.01\}$. The significance levels for our tests are $\alpha\in \{0.1,0.05,0.01\}$.

For all combinations of $n$, $\lambda$ and $a$ we run the experiment 2000 times and report $\hat r=\hat r(n,\lambda,a,\alpha)$  in \autoref{tab:1}. We can see that the respective size is captured fairly accurately even at the relatively small sample size $n=100$. Not surprisingly, the test is more powerful for shorter periods and larger sample sizes. Concerning the power we notice that under our setting with  $a=1$  the signal-to-noise ratio is
$$
\frac{1}{d}\sum_{t=1}^d\|s(t)\|^2\Big/E\|X_t\|^2\approx \frac{1}{5.5}.
$$
Here we have approximated $E\|X_t\|^2$ by $\frac{1}{n}\sum_{t=1}^n\|X_t\|^2$ with $n=10^4$. 

\begin{table}[ht]
\centering
\begin{tabular}{l|r|rrr|rrr|rrr}
\multicolumn{2}{c|}{$\hat r(n,\lambda,a,\alpha)$} &  \multicolumn{3}{c}{$a=0$ ($\equiv \mathcal{H}_0$)} & \multicolumn{3}{|c|}{$a=1$} &\multicolumn{3}{|c}{$a=2$}\\
  \hline\hline
  & $\alpha$ & 0.1 & 0.05 & 0.01 & 0.1 & 0.05 & 0.01 & 0.1 & 0.05 & 0.01 \\ 
  \hline\hline
$\lambda=5$ &$n=100$ & 0.066 & 0.029 & 0.004 & 0.861 & 0.799 & 0.670 & 1.000 & 0.999 & 0.993 \\  
 & $n=200$ &0.082 & 0.038 & 0.006 & 0.989 & 0.983 & 0.970 & 1.000 & 1.000 & 1.000 \\ 
  &$n=500$ & 0.093 & 0.054 & 0.011 & 1.000 & 1.000 & 0.999 & 1.000 & 1.000 & 1.000 \\ 
   \hline
 $\lambda=15$ &  $n=100$ & 0.082 & 0.041 & 0.005 & 0.249 & 0.165 & 0.071 & 0.818 & 0.758 & 0.606 \\ 
  &$n=200$ & 0.071 & 0.035 & 0.006 & 0.569 & 0.471 & 0.293 & 0.985 & 0.973 & 0.922 \\  
  &$n=500$ &0.096 & 0.045 & 0.007 & 0.990 & 0.978 & 0.942 & 1.000 & 1.000 & 1.000 \\ 
\end{tabular}
\caption{Empirical rejection rates in our simulation study.}
\label{tab:1}
\end{table}

\subsection{Application to real data}
We now apply the test directly to the \texttt{PM10} data set. In \citet{hoermann:2018} the same data were tested for a fixed period $d=7$ in order to reveal a potential weekday effect. It was found there, that such a weekday effect is significant. The reason being that on weekends the shape of the \texttt{PM10} curves (again we use $\sqrt{\texttt{PM10}}$ curves) changes towards a lower level during day time and higher levels during the night time. Since the test we propose here is not requiring knowledge of the period $d$, it is of course expected to have smaller power.

We consider two settings: in the first we use the data $Z_1,\ldots, Z_{175}$ as described in Section~\ref{s:datagen}, i.e.\ the detrended data, centered by the weekday averages. In addition we consider $\tilde Z_1,\ldots, \tilde Z_{175}$, where  the detrending step is skipped. This data corresponds to the actual $\sqrt{\texttt{PM10}}$ curves.

Instead of plainly computing the test statistic $T_n$ we rather show in \autoref{fig:test2} plots of
$$
T_n(j):=\|(I-e^{-\ii\omega_j}\widehat\rho\,)\mathcal Y_n(\omega_j)\|^2-\log(q)+\sum_{j=2}^{a_n}\log(1-\hat\lambda_j/\hat\lambda_1)\quad j=1,\ldots, q=87.
$$
The horizontal lines represent critical values at levels $\alpha=0.1$, $0.05$ and $0.01$. For the detrended data (left figure) we cannot find a significant violation of $\mathcal{H}_0$. Also for the non-detrended data (right figure) a weekly periodicity, corresponding to frequency $\omega_{25}=\frac{2\pi}{7}$ (marked by the dashed vertical line) does not stand out significantly. So here we are confronted with the loss in power we mentioned before. However, to our surprise, we did notice a significant periodicity at frequency $\omega_1$. A closer look into the data shows that it can be explained by a seasonal behavior of \texttt{PM10}, which we did not notice earlier. Taking a moving averages sliding over the data, we observed a slightly increasing trend of the base \texttt{PM10} level towards the high winter, followed again by a decreasing trend towards spring. We remark that it is quite difficult to notice such features by visual inspection, since plotting and visually analysing 175 functional data in a sequence is not quite obvious.

In practice it is advisable to test for a fixed frequency, if we have a particular conjecture about the length of the period. The example shows that it is well worth to complement this approach with our new test, as it may reveal periodicities which are not a priori expected.

\begin{figure}[h]
\captionsetup{width=0.8\textwidth}
\centering
\includegraphics[width=7cm]{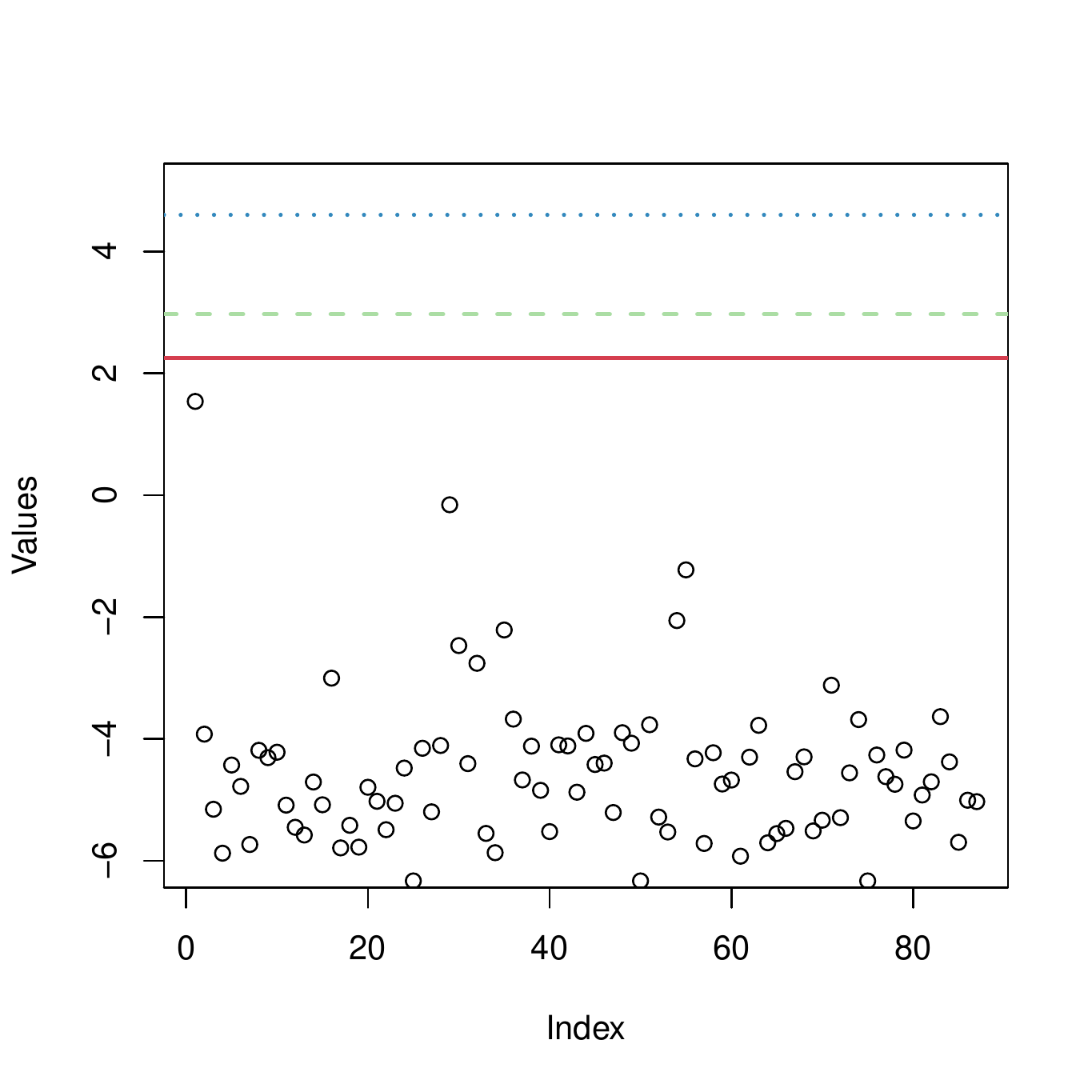}
\includegraphics[width=7cm]{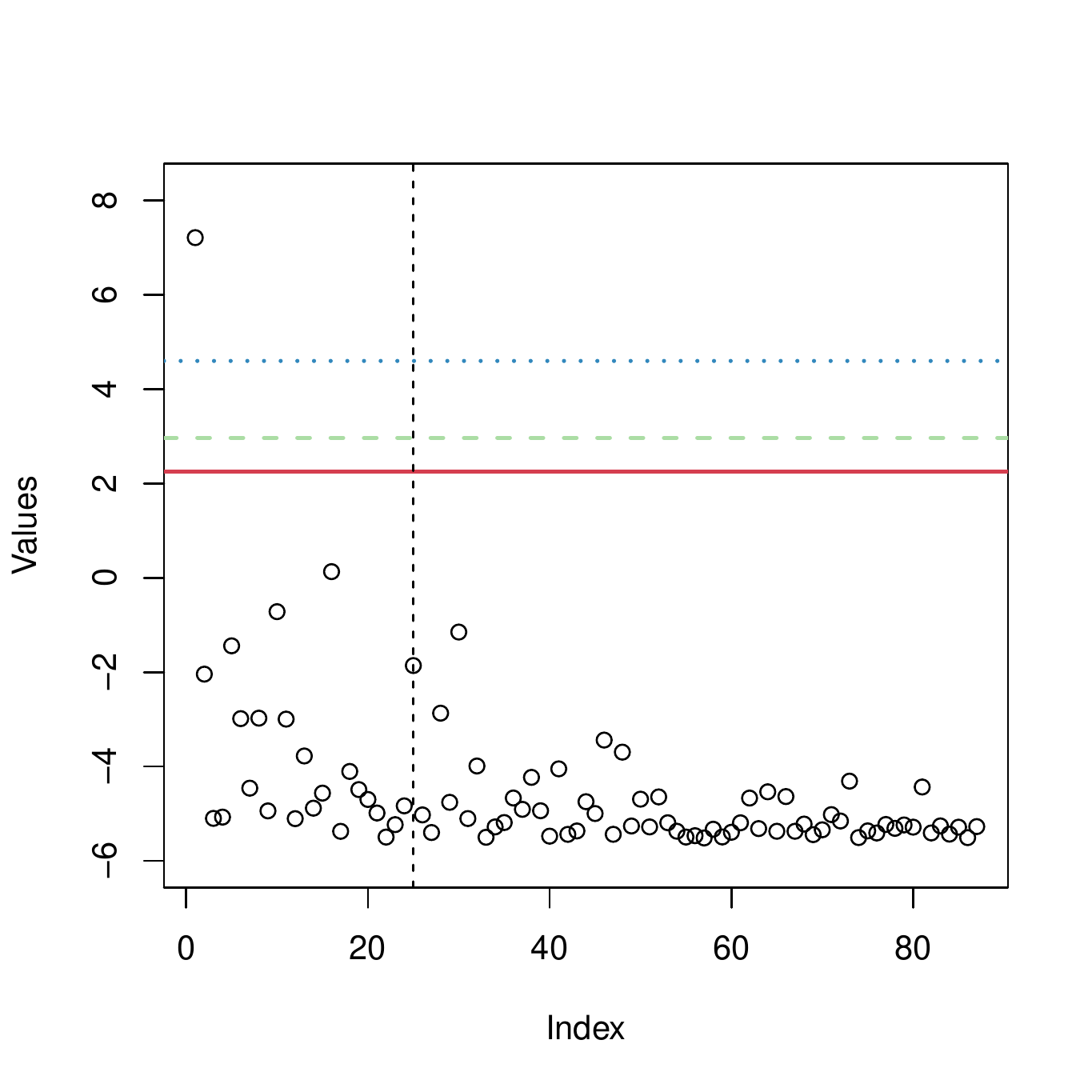}
\caption{The statistics $T_n(j)$ plotted for index $j=1,\ldots, q$. The left figure is based on $(Z_i)$ (detrended data) the right figure on $(\tilde Z_i)$ (actually \texttt{PM10} curves).}
\label{fig:test2}
\end{figure}

\section{Conclusion}\label{s:conclusion}

We have investigated the limiting distribution of the maximum norm of the periodogram operator of a Hilbert space valued random sequence. This a very useful statistic when we are interested in revealing a hidden periodic signal in functional time series. For the proof of our main results we proceed stepwise from the multivariate, to the high-dimensional (i.e.\ the dimension is diverging with sample size) and then to the infinite dimensional case. The method of proof we use is based on recent advances in the normal approximation of high dimensional data in \citet{chernozhukov:2017}. Our approach can be used to recover a classical result of \citet{davis:mikosch:1999} for univariate data. In fact, the proof for the univariate results in \citet{davis:mikosch:1999} with our approach would be much shorter. For passing to the infinite dimensional case we had to slightly adapt the result of  \citet{chernozhukov:2017} for our needs, and make the constant in the normal approximation bounds explicit. The application also demands to extend our theory beyond independent data. We have presented an extension to linear processes under quite sharp conditions.

Finally, we conducted an empirical study to investigate how this theory works with simulated as well as real data. We investigate the \texttt{PM10} data set from Graz, Austria  which we are well familiar with and which we have used as an example in different publications (see \citet{stadlober:2008} and \citet{hoermann:2018}). This is an air quality data set that contains the amount of particulate matter of up to \SI{10}{\micro\metre} in diameter measured in \SI{}{\micro\gram/\metre^3}.
The \texttt{PM10} data set is also the main building block of our simulated data. We use it to generate synthetic and at the same time realistic data via some resampling scheme. Our simulation study shows that our approach has good finite sample performance. We also compare our test with the test of \citet{hoermann:2018} using the \texttt{PM10} data set. Since here we do not require the knowledge of the period, it is expected that we have smaller power. Our test does not detect the same (weekly) periodic component as the test by \citet{hoermann:2018}, but the new approach reveals another seasonal effect which we did not notice previously. 

\section{Proofs and auxiliary lemmas}\label{proofsart3}
For the proofs we introduce the following notation and conventions. We use again $\|\cdot\|$ as norm on $H$ but also for the Euclidian norm in $\mathbb{R}^d$. The specific meaning should be clear from the context. We use $N_d(\mu,\Sigma)$ for the $d$-variate normal law with mean $\mu$ and covariance $\Sigma$. The unit-sphere in $\mathbb{R}^d$ is denoted $\mathbb S^{d-1}=\{x\in\mathbb R^d:\|x\|=1\}$. We define $\|u\|_0$ the number of non-zero components of the vector $u\in\mathbb R^d$. We will use $I_A$ for indicator function of a set $A$ and $I_d$ the identity matrix in $\mathbb{R}^d$.

The main tool of our proofs is a powerful result of~\citet{chernozhukov:2017}. Suppose that $V_1,\dots,V_n$ are independent random vectors in $\mathbb R^p$ with zero means and finite second moments. Let $W_1,\dots,W_n$ be independent Gaussian random vectors in $\mathbb R^p$ such that $W_i\sim N_p(0,E[V_iV_i'])$ for $1\le i\le n$. Set  $S^{V}_n=n^{-1/2}\sum_{i=1}^n V_i$ and $S^{W}_n=n^{-1/2}\sum_{i=1}^n W_i$ for $n\ge1$. \citet{chernozhukov:2017} establish a bound for 
\begin{equation}\label{e:cbound}
	\rho_n(\mathcal{A}^{\mathrm{sp}}(s)) = \sup_{A\in \mathcal{A}^{\mathrm{sp}}(s)}|P(S^{V}_n \in A)-P(S^{W}_n \in A)|,
\end{equation}
where $\mathcal{A}^{\mathrm{sp}}(s)$ is the class of $s$-sparsely convex subsets of $\mathbb{R}^d$. A set $A$ is an element of  $\mathcal{A}^{\mathrm{sp}}(s)$ if $A$ is an intersection of finitely many convex sets $A_k$ and if the indicator function of each $A_k$, $x\mapsto I_{A_k}(x)$, depends only on $s$ components of its argument $x=(x_1,\ldots, x_d)$. We state some conditions which will be needed:
\begin{itemize}
\item[(i)] $n^{-1}\sum_{t=1}^n E|u'V_t|^2\ge b$  for all  $u\in \mathbb{S}^{p-1}$ and $\|u\|_0\leq s$;
\item[(ii)]$n^{-1}\sum_{t=1}^n E|V_{t,j}|^{2+k}\leq B_n^k$ for all $j=1,\ldots,p$ and $k=1,2$;
\item[(iii)]$E\exp(|V_{t,j}|/B_n)\leq 2$ for all $t=1,\ldots,n$ and $j=1,\ldots,p$;
\item[(iv)]$E\max_{1\le j\le p}(|V_{t,j}|/B_n)^q\le 2$ for all $t=1,\ldots,n$,
\end{itemize}
where $b,q>0$ are some constants and $B_n\geq 1$ is a sequence of constants, possibly growing to infinity as $n\to\infty$. 
\begin{Proposition}(\citet[Proposition~3.2]{chernozhukov:2017}))\label{p:cher}
Under conditions (i), (ii) and (iii), it holds that
\begin{equation}\label{cherrho}
 \rho_n(\mathcal{A}^{\mathrm{sp}}(s)) \leq C\cdot  
\frac{B_n^{1/3}\log^{7/6}(pn)}{n^{1/6}}
\end{equation}
for $n\ge4$. The constant $C$ in \eqref{cherrho} depends only on $b$, $s$ and $q$. 
\end{Proposition}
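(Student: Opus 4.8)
This statement is a verbatim restatement of Proposition~3.2 of \citet{chernozhukov:2017}, so the only proof genuinely required is to quote that reference; nothing new happens here. For orientation I nonetheless sketch the structure of their argument, which is the route I would take if I had to reconstruct it.

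The plan is a Lindeberg-type (Slepian) interpolation between $S_n^{V}$ and $S_n^{W}$ combined with an anti-concentration estimate. Fix $A\in\mathcal A^{\mathrm{sp}}(s)$ and write $A=\bigcap_k A_k$ with each $A_k$ convex and with indicator depending on only $s$ of the $p$ coordinates. The first step is to replace $I_A$ by a $C^3$ surrogate $f_\beta$ with $0\le f_\beta\le 1$, equal to $1$ on the inner $\beta$-shrinkage $A^{-\beta}$ and to $0$ off the outer $\beta$-enlargement $A^{+\beta}$, built by composing a soft-max (log-sum-exp) representation of each convex piece with a smooth cutoff and smoothing only in the $s$ active coordinates of that piece; the soft-max structure is what keeps the third derivatives of $f_\beta$ of order $\beta^{-2}$ rather than $\beta^{-3}$. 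The second step runs the Lindeberg swap, replacing $V_t$ by $W_t$ one index at a time; a third-order Taylor expansion bounds the error of a single swap by a constant times $n^{-3/2}$ times the third-derivative bound times a third-moment factor, where condition~(ii) supplies the moment bound and condition~(iii) enters through a truncation that keeps the polylogarithmic factors in $\log(pn)$ under control. Summing over the $n$ swaps gives a bound of the form $C(b,s,q)\, n^{-1/2} B_n \beta^{-2}\log^{a}(pn)$ for $|E f_\beta(S_n^{V})-E f_\beta(S_n^{W})|$.

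The third step converts this back to the sharp indicator: since $f_\beta$ sandwiches $I_A$ between $I_{A^{-\beta}}$ and $I_{A^{+\beta}}$, the remaining discrepancy is controlled once $P(S_n^{W}\in A^{+\beta}\setminus A^{-\beta})$ is small, uniformly in $A$. Here the sparse structure is essential: the annular region lies in a union of slabs of width $O(\beta)$, each living in an $s$-dimensional coordinate subspace, and the anti-concentration inequality of \citet{nazarov:2003}, together with the non-degeneracy guaranteed by condition~(i) with constant $b$, bounds the Gaussian mass of each slab by $C(b,s)\,\beta\sqrt{\log p}$ with no dependence on the number of pieces. Balancing $\beta\sqrt{\log p}$ against $n^{-1/2}B_n\beta^{-2}\log^{a}(pn)$ and choosing $\beta$ optimally produces exactly the rate $C\,B_n^{1/3}\log^{7/6}(pn)\,n^{-1/6}$ asserted in \eqref{cherrho}, with $C=C(b,s,q)$.

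The main obstacle, and the reason the original proof is intricate rather than routine, is the bookkeeping that certifies that $C$ depends only on $b$, $s$, $q$: one must handle the (arbitrarily large, yet constant-irrelevant) number of convex pieces without paying a factor growing in their number, carry out the reduction to the $s$ active coordinates uniformly over pieces, and track precisely how $b$ in condition~(i) governs the non-degeneracy feeding the Nazarov step. Making the dependence on $b$ fully quantitative is exactly the refinement we carry out separately in \autoref{p:cher+c} and Appendix~\ref{bdep}; for the proposition as stated here it suffices to invoke \citet[Proposition~3.2]{chernozhukov:2017}.
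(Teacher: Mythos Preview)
Your identification is correct: the paper does not prove this proposition at all but simply quotes it from \citet{chernozhukov:2017}, exactly as you say in your first sentence. So your ``proof'' matches the paper's own treatment, and nothing more is required.

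One small remark on your optional sketch: the route you outline (directly smoothing $I_A$ for a sparsely convex $A$ and applying Nazarov to the annulus) is closer in spirit to the hyperrectangle case. The actual argument in \citet{chernozhukov:2017} for $\mathcal A^{\mathrm{sp}}(s)$---and the one the present paper retraces in Appendix~\ref{bdep} when proving the refined \autoref{p:cher+c}---instead approximates each sparsely convex set by an $m$-generated polytope with $m\le (pn)^{O(s^2)}$, reduces to the class $\mathcal A^{\mathrm{si}}$, and then invokes the hyperrectangle bound for the transformed vectors $(v'V_t)_{v\in\mathcal V(A^m)}$. This polytope-approximation step is precisely what lets the constant depend on $s$ but not on the (unbounded) number of convex pieces $A_k$. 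Your sketch is not wrong as a heuristic, but if you ever need to reconstruct the proof in detail, that reduction is the mechanism you would follow.
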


\subsection{Proofs of main results}

\begin{proof}[Proof of \autoref{mainthm0}]
We denote
\[
	\tilde{X}_t
	=X_t I_{\{\|X_t\|\le n^{1/r}\}}-\operatorname E[X_1 I_{\{\|X_1\|\le n^{1/r}\}}],
\]
and
\[
	\tilde X_t^d
	=\sum_{k=1}^d\langle\tilde X_t,v_k\rangle v_k
	\quad\text{and}\quad
	\tilde{\mathcal X}_n^d(\omega)
	=n^{-1/2}\sum_{t=1}^n\tilde X_t^de^{-it\omega}
\]
for $n\ge1$, $t\ge1$, $d\ge1$ and $\omega\in[-\pi,\pi]$. 
In view of \autoref{lemma:trunc} in Section~\ref{sec:aux}, it suffices to show that $\lambda_1^{-1}(\max_{1\leq j\leq q}  \|\tilde{\mathcal{X}}_n^d(\omega_j)\|^2-b_n^d)\convd\mathcal G$ as $n\to\infty$. To this end let us define $\mathbb R^{2dq}$-valued random vectors
\[
	\D
	=n^{-1/2}\sum_{t=1}^n V_t
\]
for $n\ge1$, where
$
	V_t
	=(\langle\tilde X_t,v_1\rangle f_t',\ldots,\langle\tilde  X_t,v_d\rangle f_t')'
$
with
\begin{equation}\label{eq:Wt2q}
	f_t
	=(\cos(t\omega_1),\sin(t\omega_1),\ldots,\cos(t\omega_q),\sin(t\omega_q))'\in\mathbb{R}^{2q}.
\end{equation}
Note,  that for the sake of a lighter notation  we suppress in some variables the dependence on $d$ and $q$ .

We let $V_{t,m}$ and $\Dsub{m}$ be the $m$-th element of the vectors $V_t$ and $\D$, respectively. For some ordered index set $J$ we let $V_{t,J}=(V_{t,j}\colon j\in J)'$. Analogously we define $\Dsub{J}$. Then 
$$V_{t,2q\times (\ell-1)+2k-1}=\langle \tilde X_t,v_\ell\rangle \cos(t\omega_k)\quad\text{and}\quad 
V_{t,2q\times (\ell-1)+2k}=\langle \tilde X_t,v_\ell\rangle \sin(t\omega_k),$$
for $1\leq \ell\leq d$ and $1\leq k\leq q.$
Thus, with the sets $J_k=J+2(k-1)$, where
\begin{equation}\label{eq:J}
	J=\{1,2,2q+1,2q+2,\dots,2(d-1)q+1,2(d-1)q+2\}	
\end{equation}
we obtain vectors $V_{t,J_k}\in\mathbb{R}^{2d}$, $k=1,\ldots, q$, where
$$
V_{t,J_k}=(\langle\tilde X_t,v_1\rangle \cos(t\omega_k),\langle\tilde X_t,v_1\rangle \sin(t\omega_k),\ldots, 
\langle\tilde X_t,v_d\rangle \cos(t\omega_k),\langle\tilde X_t,v_d\rangle \sin(t\omega_k))'.
$$ It holds that
\begin{align*}
 	P(\max_{1\leq k\leq q}\|\tilde{\mathcal{X}}_n^d(\omega_k)\|^2\leq x)
 	&=P( \|\tilde{\mathcal X}_n^d(\omega_k)\|^2\leq x\ \text{for all}\ k=1,\ldots,q )\\
	&=P(\|\Dsub{J_k}\|^2\leq x\ \text{for all}\ k=1,\ldots,q)\\
	&=P(\D\in \cap_{k=1}^q A_k),
\end{align*}
where 
\[
	A_k=\{y\in\mathbb R^{2dq}:\|(y_j)_{j\in J_k}\|^2\le x\}.
\]
It is important to note that $\cap_{k=1}^q A_k$ is a $2d$-sparsely convex set. 

Our target is then to apply Proposition~\ref{p:cher}. To this end we show that conditions (i), (ii) and (iii) hold. Suppose that $u\in\mathbb S^{2dq-1}$ and $u=(u_1' , \ldots ,u_d')'$ with $u_\ell\in\mathbb R^{2q}$. We obtain
\begin{align*}
	n^{-1}\sum_{t=1}^n\operatorname E|\langle V_t,u\rangle|^2
	&\ge n^{-1}\sum_{t=1}^n\operatorname E\Bigl|\sum_{k=1}^p\langle X_t,v_k\rangle f_t' u_k\Bigr|^2\\
	&+2n^{-1}\sum_{t=1}^n\sum_{k,l=1}^d\operatorname E[\langle X_t,v_k\rangle\langle \tilde X_t-X_t,v_l\rangle]f_t'u_k f_t' u_l\\
	&=T_1+T_2.
\end{align*}
We have that $T_1\ge\lambda_d/2$ (see \autoref{lemM1} in Section~\ref{sec:aux}). Since $n^{-1}\sum_{t=1}^n f_t f_t'=\frac{1}{2}I_{2q}$ and $\sum_{k,l=1}^d\langle u_l,u_k\rangle\le d^2$, we obtain
\[
	|T_2|
	\le\sum_{k,l=1}^d\operatorname E|\langle X_1,v_k\rangle\langle\tilde X_1-X_1,v_l\rangle||\langle u_l,u_k\rangle|
	\le d^2(\operatorname E\|X_1\|^2)^{1/2}(\operatorname E\|\tilde X_1-X_1\|^2)^{1/2}.
\]
Clearly $\operatorname E\|\tilde X_1-X_1\|^2\to0$, and hence $n^{-1}\sum_{t=1}^n\operatorname E|\langle V_t,u\rangle|^2\ge\lambda_d/2+o(1)$ as $n\to\infty$ and thus condition (i) is satisfied. 

To verify (ii) we first notice that for any $\ell$ and $m$
$$
|\langle \tilde X_t,v_\ell\rangle| \max\{|\cos(t\omega_m)|,|\sin(t\omega_m)|\}
\leq 2\|X_t\|I_{\{\|X_1\|\le n^{1/r}\}}.
$$ 
Hence, if $r<2+k$, then
\[
	n^{-1}\sum_{t=1}^n\operatorname E|V_{t,j}|^{2+k}
	\le2^{2+k}\operatorname E[\|X_1\|^{2+k}I_{\{\|X_1\|\le n^{1/r}\}}]
	=O(n^{(2+k)/r-1})
\]
as $n\to\infty$. For $r>2+k$, we have $n^{-1}\sum_{t=1}^n\operatorname E|V_{t,j}|^{2+k}=O(1)$. Thus, (ii) is satisfied if we set $B_n=cn^{1/r}$ for $n\ge1$ with some $c>0$. Finally, (iii) follows from
\[
	\operatorname E\exp(|V_{t,j}|/B_n)
	\le\exp(2n^{1/r}/B_n)\leq 2,
\]
if $c\ge 2/\log2$. 

Hence, (i), (ii) and (iii) hold, which in turn implies that \eqref{cherrho} holds with $B_n=cn^{1/r}$, where $c\ge 2/\log2$ and $r>2$. The bound in \eqref{cherrho} tends to 0 with $n\to\infty$. 

Suppose now that $\tilde Y_1,\tilde Y_2,\ldots$ are iid Gaussian random elements with values in $H_0$ such that $\operatorname E\tilde Y_1=0$ and $\operatorname{Var}(\tilde Y_1)=\operatorname{Var}(\tilde X_1)$. Then the $2dq$-dimensional random vectors $V_t$ have the same covariance matrices as the Gaussian random vectors $W_t$, where
\[
	W_t
	=(\langle\tilde Y_t,v_1\rangle f_t',\ldots,\langle\tilde  Y_t,v_d\rangle f_t')',\quad 1\le t\le n.
\]
In analogy to $\tilde X_t^d$ and $\tilde{\mathcal X}_n^d(\omega)$ we define now
\[
	\tilde Y_t^d
	=\sum_{k=1}^d\langle\tilde Y_t,v_k\rangle v_k
	\quad\text{and}\quad
	\tilde{\mathcal Y}_n^d(\omega)
	=n^{-1/2}\sum_{t=1}^n\tilde Y_t^de^{-it\omega}.
\]
We have shown that 
\begin{align*}
&\sup_{x\in\mathbb{R}}|P(\max_{1\leq k\leq q}\|\tilde{\mathcal{X}}_n^d(\omega_k)\|^2\leq x)-P(\max_{1\leq k\leq q}\|\tilde{\mathcal{Y}}_n^d(\omega_k)\|^2\leq x)|\\
&\quad\leq \sup_{A\in \mathcal{A}^{\text{sp}}(2d)}|P(S_n^V\in A)-P(S_n^W\in A)|\to 0,\quad\text{when  $n\to\infty$}.
\end{align*}
Therefore, it remains to prove that 
\begin{equation}\label{e:inter}
\lambda_1^{-1}\max_{1\leq j\leq q}\|\tilde{\mathcal{Y}}_n^d(\omega_j)\|^2-b_q^d=\lambda_1^{-1}
\max_{1\leq j\leq q}\sum_{k=1}^d|\langle\mathcal{\tilde Y}_n(\omega_j),v_k\rangle|^2-b_q^d\convd \mathcal G.
\end{equation}
 To this end we introduce $(\tilde{\lambda}_k,\tilde v_k)$, which are the pairs of eigenvalues and eigenfunctions of $\operatorname{Var}(\tilde X_1)$. In a first step we show that
\begin{equation}
\label{e:firststep}
\lambda_1^{-1}\max_{1\leq j\leq q}\sum_{k=1}^d|\langle\mathcal{\tilde Y}_n(\omega_j),\tilde{v}_k\rangle|^2-b_n^d\convd \mathcal G.
\end{equation}
Let us denote
\[
	C_{kj}
	=\tilde\lambda_k^{-1/2}n^{-1/2}\sum_{t=1}^n\langle\tilde Y_t,\tilde v_k\rangle\cos(t\omega_j)
	\quad\text{and}\quad S_{kj}
	=\tilde\lambda_k^{-1/2}n^{-1/2}\sum_{t=1}^n\langle \tilde Y_t,\tilde v_k\rangle\sin(t\omega_j),
\]
with $1\le k\le d$ and $1\le j\le q$. These $2dq$ variables are mutually independent and $N(0,\frac{1}{2})$ distributed. Thus
\begin{equation*}
	\max_{1\le j\le q} \sum_{k=1}^d|\langle\mathcal{\tilde Y}_n(\omega_j),\tilde v_k\rangle|^2
	=\max_{1\le j\le q}\sum_{k=1}^d\tilde\lambda_kE_{kj},
\end{equation*}
where $E_{kj}=C_{kj}^2+S_{kj}^2\stackrel{\text{iid}}{\sim}\operatorname{Exp}(1)$.
Moreover, we have 
\begin{align*}
	\Bigl|\max_{1\le j\le q}\sum_{k=1}^d\tilde\lambda_kE_{kj}
	-\max_{1\le j\le q}\sum_{k=1}^d\lambda_kE_{kj}\Bigr|
	&\le\max_{1\le j\le q}|\sum_{k=1}^d\tilde\lambda_kE_{kj}-\sum_{k=1}^p\lambda_kE_{kj}|\\
	&\le\sum_{k=1}^d|\tilde\lambda_k-\lambda_k|\max_{1\le j\le q}E_{kj}.
\end{align*}
It is a basic result that $\max_{1\le j\le q}E_{kj}=O_P(\log n)$ and \autoref{lemma:convofcovop}  yields $|\tilde\lambda_k-\lambda_k|\le\|\operatorname{Var}(\tilde X_1)-\operatorname{Var}(X_1)\|=o(n^{-(1-2/r)})$ as $n\to\infty$. Hence, combining these results with \autoref{hypofixeddim}, we get \eqref{e:firststep}.

The last step in the proof is to show \eqref{e:inter} and this in turn will follow from \eqref{e:firststep}
if we prove that
\begin{equation}\label{eq:eigenerror}
	\max_{1\le j\le q}\sum_{k=1}^d|\langle\mathcal{\tilde Y}_n(\omega_j),c_kv_k\rangle|^2
	-\max_{1\le j\le q}\sum_{k=1}^d|\langle\mathcal{\tilde Y}_n(\omega_j),\tilde v_k\rangle|^2
	=o_P(1),\quad n\to\infty.
\end{equation}
The absolute of the left-hand side in \eqref{eq:eigenerror} is bounded by
\begin{align}
	&\max_{1\le j\le q}\Bigl|\sum_{k=1}^d\{|\langle\mathcal{\tilde Y}_n(\omega_j),c_kv_k\rangle|^2
	-|\langle\mathcal{\tilde Y}_n(\omega_j),\tilde v_k\rangle|^2\}\Bigr|\notag\\
	&=\max_{1\le j\le q}\Bigl|\sum_{k=1}^d\{|\langle\mathcal{\tilde Y}_n(\omega_j),c_kv_k-\tilde v_k\rangle|^2
	+2\operatorname{Re}[\langle\mathcal{\tilde Y}_n(\omega_j),c_kv_k-\tilde v_k\rangle\langle\tilde v_k,\mathcal{\tilde Y}_n(\omega_j)\rangle]\}\Bigr|\notag\\
	&\le\sum_{k=1}^d \max_{1\le j\le q}|\langle\mathcal{\tilde Y}_n(\omega_j),c_kv_k-\tilde v_k\rangle|^2\label{eq:secterm1}\\
	&\quad+2\sum\max_{1\le j\le q}|\langle\mathcal{\tilde Y}_n(\omega_j),\tilde v_k\rangle|\max_{1\le j\le q}|\langle\mathcal{\tilde Y}_n(\omega_j),c_kv_k-\tilde v_k\rangle|.\label{eq:secterm}
\end{align}
The components of the random vector
\[
	\left(\begin{array}{c}
	n^{-1/2}\sum_{t=1}^n\langle\tilde Y_t,c_kv_k-\tilde v_k\rangle\cos(t\omega_1)\\
	n^{-1/2}\sum_{t=1}^n\langle\tilde Y_t,c_kv_k-\tilde v_k\rangle\sin(t\omega_1)\\
	\ldots\\
	n^{-1/2}\sum_{t=1}^n\langle\tilde Y_t,c_kv_k-\tilde v_k\rangle\cos(t\omega_q)\\
	n^{-1/2}\sum_{t=1}^n\langle\tilde Y_t,c_kv_k-\tilde v_k\rangle\sin(t\omega_q)
	\end{array}\right)
\]
are uncorrelated and the covariance matrix is given by $2^{-1}\operatorname E|\langle\tilde Y_1,c_kv_k-\tilde v_k\rangle|^2I_{2q}$. A summand in \eqref{eq:secterm1} is given by
\begin{align*}
	&\max_{1\le j\le q}\Bigl|n^{-1/2}\sum_{t=1}^n\langle\tilde Y_t,c_kv_k-\tilde v_k\rangle e^{-it\omega_j}\Bigr|^2=\\
	&=\operatorname E|\langle \tilde Y_1,c_kv_k-\tilde v_k\rangle|^2\max_{1\le j\le q}\Bigl|(\operatorname E|\langle \tilde Y_1,c_kv_k-\tilde v_k\rangle|^2)^{-1/2}n^{-1/2}\sum_{t=1}^n\langle\tilde Y_t,c_kv_k-\tilde v_k\rangle e^{-it\omega_j}\Bigr|^2\\
	&\le\operatorname E\|\tilde Y_1	\|^2\|c_kv_k-\tilde v_k\|^2\max_{1\le j\le q}\Bigl|(\operatorname E|\langle \tilde Y_t,c_kv_k-\tilde v_k\rangle|^2)^{-1/2}n^{-1/2}\sum_{t=1}^n\langle\tilde Y_t,c_kv_k-\tilde v_k\rangle e^{-it\omega_j}\Bigr|^2.
\end{align*}
Using \autoref{lem:convrateeigen} in Section~\ref{sec:aux}, $\|c_kv_k-\tilde v_k\|^2=o(n^{-2(1-2/r)})$ as $n\to\infty$ and
\[
	\max_{1\le j\le q}\Bigl|(\operatorname E|\langle \tilde Y_1,c_kv_k-\tilde v_k\rangle|^2)^{-1/2}n^{-1/2}\sum_{t=1}^n\langle\tilde Y_t,c_kv_k-\tilde v_k\rangle e^{-it\omega_j}\Bigr|^2
	=O_P(\log n)
\]
as $n\to\infty$ (since this is the maximum of $q$ iid standard exponential random variables) shows that \eqref{eq:secterm1} tends to 0. Similar arguments show that \eqref{eq:secterm} goes to $0$ in probability. Hence \eqref{eq:eigenerror} holds.
\end{proof}

\begin{proof}[Proof of \autoref{thm:finite}]
The proof is basically identical to the proof of \autoref{mainthm0}. The main difference here is that if we consider the approximating Gaussian process $\{Y_t\}$ with $\operatorname{Var}(Y_1)=\operatorname{Var}(X_1)$  then
\[
	\max_{1\le j\le q}\Bigl\|n^{-1/2}\sum_{t=1}^nY_te^{-it\omega_j}\Bigr\|^2
	=\max_{1\le j\le q}\Bigl\{\sum_{k=1}^dE_{kj}\Bigr\},
\]
where $E_{kj}$ are iid $\operatorname{Exp}(1)$ random variables with $1\le k\le d$ and $1\le j\le q$. Then $\sum_{k=1}^dE_{kj}$ are iid  $\operatorname{Gamma}(d,1)$ random variables. The limiting distribution of the maximum can be found in Example~1 of \citet{kang:serfozo:1999} or Table~3.4.4 of \citet{embrechts:1997}. The proof is complete.
\end{proof}

Now we let $d$ grow to infinity. In this case we need a version of \autoref{p:cher} where the dependence of the constant $C$ on $b$ and $s$ is explicit.  We provide such a result in the following proposition which may be of independent interest. The proof is outlined in Appendix~A.

\begin{Proposition}\label{p:cher+c}
Suppose (i), (ii), (iv) hold with $q\ge4$ hold and $\{B_n\}_{n\ge1}$ is a bounded sequence, then 
\begin{equation}\label{cherrhoexplicit}
	\rho_n(\mathcal{A}^{\mathrm{sp}}(s))
	\le C\cdot \frac{s^4\log^{7/6}(pn)}{b^{1/2}n^{1/6}},
\end{equation}
where $C$ is a constant that does not depend on $n$, $b$, $p$ or $s$. 
\end{Proposition}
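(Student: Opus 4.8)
The plan is to retrace the proof of \citet[Proposition~3.2]{chernozhukov:2017} (our \autoref{p:cher}) and to bookkeep at every step how the implied constant depends on the sparsity level $s$ and the lower eigenvalue bound $b$, treating $q$ and the (now bounded) sequence $\{B_n\}_{n\ge1}$ as fixed quantities that may be absorbed into the universal constant $C$. The reason a clean dependence is available is that the Chernozhukov--Chetverikov--Kato argument for sparsely convex sets reduces, via a smoothing/mollification step and the decomposition of an $s$-sparsely convex set into finitely many convex pieces each depending on only $s$ coordinates, to the Gaussian anti-concentration inequality of \citet{nazarov:2003} applied in $\mathbb{R}^s$, together with a Slepian-type interpolation (Stein's method / Lindeberg swapping) bound. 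First I would isolate the three ingredients that carry $b$ and $s$: (a) the Nazarov anti-concentration bound, which on an $s$-dimensional section of a Gaussian vector whose relevant covariance block has smallest eigenvalue $\ge b$ contributes a factor of order $\sqrt{s}/\sqrt{b}$ (more precisely $s^{1/2} b^{-1/2}$ times the Euclidean width of the smoothing band); (b) the Lindeberg replacement error, which is controlled by third moments and the sup-norm of the third derivative of the mollified indicator, the latter scaling polynomially in $s$ and inversely in the smoothing parameter; (c) the choice of the smoothing parameter, which is optimized by balancing (a) and (b).

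The key steps, in order: (1) Reduce to the case of a single convex set $A_k$ depending on $s$ coordinates by a union bound over the finitely many pieces; the number of pieces does not enter the bound because one estimates $\sup_{A}|P(S_n^V\in A)-P(S_n^W\in A)|$ for each piece and the sparsely-convex class is closed under the relevant operations — this is exactly the structure exploited in the proof of \autoref{mainthm0}, where $\cap_{k=1}^q A_k$ is $2d$-sparsely convex. (2) Mollify $I_{A_k}$ at scale $\delta$ using a smooth kernel supported on a ball, so that the smoothed function $g_\delta$ satisfies $\|\partial^3 g_\delta\|_\infty \lesssim \mathrm{poly}(s)\,\delta^{-3}$ and $g_\delta$ differs from $I_{A_k}$ only on a $\delta$-neighborhood of $\partial A_k$. (3) Apply the Gaussian comparison (Lindeberg/Stein) bound to $E g_\delta(S_n^V) - E g_\delta(S_n^W)$; here one invokes conditions (ii) and (iv) with $q\ge4$ and the boundedness of $B_n$ to control the truncated third moments and the max-type remainder term, yielding an error of order $\mathrm{poly}(s)\,\delta^{-3}\,n^{-1/2}\,\log^{c}(pn)$ with an explicit polynomial in $s$. (4) Bound the "strip" error $P(S_n^W \in (\partial A_k)^\delta)$ by first comparing $S_n^W$ to the exact Gaussian $S_n^W$ (trivial here) and then applying \citet{nazarov:2003} on the $s$-dimensional section, which gives $\lesssim s^{1/2} b^{-1/2}\delta$; similarly the strip error for $S_n^V$ is transferred to the Gaussian one up to a term of the same order as (3). (5) Optimize over $\delta$: balancing $\mathrm{poly}(s)\delta^{-3} n^{-1/2}\log^c(pn)$ against $s^{1/2}b^{-1/2}\delta$ gives $\delta \asymp (\mathrm{poly}(s) b^{1/2} n^{-1/2}\log^c(pn))^{1/4}$ and a final bound of the form $C\, s^{\alpha} b^{-\beta} n^{-1/6}\log^{7/6}(pn)$; tracking the exponents through the mollifier derivative bound and the anti-concentration factor yields the stated $s^4 b^{-1/2}$. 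I would double-check that the exponent of $n$ stays $1/6$ and the log-power stays $7/6$ — these are inherited unchanged from \autoref{p:cher} because replacing condition (iii) by (iv) with $q\ge4$ only affects how the extreme-value truncation remainder is handled, not the main rate.

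The main obstacle I anticipate is keeping the $s$-dependence honestly explicit through the mollification step and the Lindeberg expansion simultaneously: the third-order Taylor remainder in Stein's method involves $\|\partial^3 g_\delta\|_\infty$ summed appropriately over coordinates, and since $g_\delta$ is a function of $s$ coordinates this produces a genuine polynomial-in-$s$ blow-up whose exact exponent must be matched against the $s^{1/2}$ from Nazarov's inequality after the $\delta$-optimization; getting the combined exponent to come out exactly $4$ (rather than some larger power) requires being slightly careful about which norm of $\partial^3 g_\delta$ one uses and about using the sparsity to restrict the sums. A secondary, more routine point is verifying that under the bounded-$B_n$ hypothesis together with (iv) and $q\ge 4$, the tail/truncation term that in \citet{chernozhukov:2017} was handled via the exponential moment condition (iii) is still $o(n^{-1/6}\log^{7/6}(pn))$ with a constant independent of $s$ and $b$; this is straightforward but must be stated, and the full details are deferred to Appendix~\ref{bdep}.
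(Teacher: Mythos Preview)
Your plan contains a genuine gap at step~(1). An $s$-sparsely convex set $A$ is an \emph{intersection} $A=\cap_k A_k$ of convex pieces, each depending on at most $s$ coordinates, but different pieces depend on \emph{different} coordinate blocks, and the number $K$ of pieces can be as large as $p^s$. The quantity $P(S_n^V\in A)-P(S_n^W\in A)$ is not controlled by a union bound or by $\sup_k|P(S_n^V\in A_k)-P(S_n^W\in A_k)|$; there is no way to reduce to a single $s$-dimensional convex body without the combinatorics of the intersection entering. Consequently, your mollification/Lindeberg scheme on an individual $A_k$ does not yield a bound for $A$, and the claim that ``the number of pieces does not enter the bound'' is unfounded.

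The paper's route is different and avoids this difficulty. One approximates $A$ (when it contains a ball of radius $1/n$) by an $m$-generated polytope $A^m$ with $m\le(pn)^{cs^2}$ whose facet normals $v\in\mathcal V(A^m)$ satisfy $\|v\|_0\le s$. Then $\{S_n^V\in A^m\}$ becomes a hyperrectangle event for the transformed $m$-dimensional vectors $\tilde V_t=(v'V_t)_{v\in\mathcal V(A^m)}$, and one applies the hyperrectangle bound (the paper's modification of \citet[Proposition~2.1]{chernozhukov:2017}) with $m$ in place of $p$. The $b^{-1/2}$ appears because Nazarov's inequality is used, with an explicit constant, at the anti-concentration step of the hyperrectangle argument; the $s^4$ arises because conditions~(ii') and~(iv') hold with $B_n$ replaced by $B_n s^3$ and $B_n s$ respectively, and $\log m\asymp s^2\log(pn)$. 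The case where $A$ (or some $A_k$) fails to contain a ball of radius $1/n$ is handled separately via a Berry--Esseen bound in $\mathbb R^s$. A smaller point: your claim that Nazarov's inequality on an $s$-dimensional section contributes a factor $s^{1/2}b^{-1/2}$ is also off---the bound is $\delta\, b^{-1/2}(\sqrt{2\log s}+2)$, with a logarithmic rather than polynomial dimension dependence; in the paper's argument the relevant dimension is $m$, not $s$, and the $s$-power comes from $\log m$ and the moment inflation, not from Nazarov directly.
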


For the proofs of  \autoref{mainthm} and \autoref{fancythm} we don't need a truncation argument. We denote the DFT of $Y_1,\ldots,Y_n$ by $\mathcal{Y}_n(\omega)$, $\mathcal{Y}_n^d(\omega)$ is its projection onto $\{v_1,\ldots, v_d\}$ and $\widetilde M_n^{d}=\max_{j=1,\dots,q}\| \mathcal Y_n^d(\omega_j)\|^2$ and $ M_n^{d}=\max_{j=1,\dots,q}\| \mathcal X_n^d(\omega_j)\|^2$.

Now we are ready to prove \autoref{mainthm}.
\begin{proof}[Proof of \autoref{mainthm}]
We have
\begin{align}
	&|P( (M_n^d-b_q^d)/\lambda_1\leq x) - e^{-e^{-x}}|\nonumber\\
	&\le  | P(\widetilde{M}_n^d-b_q^d)/\lambda_1 \leq x ) - e^{-e^{-x}}| +\rho_n(\mathcal A^{\mathrm{sp}}(2d)),\label{rhoFG}
\end{align}
where
\begin{align} \label{defrho}
	\rho_n(\mathcal A^{\mathrm{sp}}(2d))
	=\sup_{x\in \mathbb{R}}| P(M_{n}^{d}\leq x)-P(\tilde{M}_{n}^{d}\leq x)|.
\end{align}
We consider the normalized partial sums
\[
	S_n^V
	=n^{-1/2}\sum_{t=1}^n\xi_t^d\otimes f_t
	=n^{-1/2}\sum_{t=1}^n V_t.
\]
where
\begin{equation}\label{eq:xi_t}
	\xi_t^d
	=(\langle X_t,v_1\rangle, \ldots,\langle  X_t,v_d\rangle)'
\end{equation}
and $f_t$ is defined by \eqref{eq:Wt2q}. Like we showed in the proof of Theorem~\ref{mainthm0} we have that
$$
 	P(M_n^d\le x)=P(\D\in \cap_{k=1}^q A_k).
$$
where $\cap_{k=1}^q A_k$ is  a $2d$-sparsely convex set.

Set $B=\max\{E\| X_1\|^3,(E\| X_1\|^4)^{1/2},(2^{-1}E\|X_1\|^4)^{1/4}\}$. We aim to apply~\eqref{cherrhoexplicit} with $p=2dq$ and $s=2d$. Since $|V_{t,j}|\leq \|X_t\|$ for all $j=1,\dots,2dq$, we see that (ii) and (iv) are satisfied with $B_n=B$ and condition (i) follows from \autoref{lemM1} in Section~\ref{sec:aux} with $b=\lambda_d/2$. Hence, by Proposition~\ref{p:cher+c} we get
\begin{equation}\label{rhopn}
	\rho_n(\mathcal A^{\mathrm{sp}}(2d_n))
	\le C\cdot\frac{ d_n^4 \;  \log^{7/6}(d_nn^2)  }{\lambda_{d_n}^{1/2}\; n^{ 1/6 }}
\end{equation} 
for $n\ge1$, where $C$ is a universal constant.  Under assumption~\eqref{assmain}, the right hand side of \eqref{rhopn} goes to $0$ as $n\to\infty$.

Since $Y_1,Y_2,\ldots$ are Gaussian random elements, $\|\mathcal{Y}^{(d)}_n(\omega_1)\|^2,\ldots,\|\mathcal{Y}^{(d)}_n(\omega_d)\|^2$
are iid variables followin a $\operatorname{Hypo}(\lambda_1^{-1},\ldots,\lambda_d^{-1})$ distribution. \autoref{lemalpha} implies that the first term on the right hand side of \eqref{rhoFG} goes to $0$ as $n\to\infty$ under assumptions \eqref{assLem5} and \eqref{gamma0}. The proof is complete.
\end{proof}

\begin{proof}[Proof of \autoref{fancythm}] We start by noting that
\begin{equation*}
\lambda_1^{-1}(M_n  - b_q) = \lambda_1^{-1}(M_n - M_n^{d_n})+  \lambda_1^{-1}(M_n^{d_n} - b_q^{d_n})   + \lambda_1^{-1} (b_n^{d_n} - b_q).
\end{equation*}
The third term converges to zero by \autoref{munp}, and the second term converges in distribution to a Gumbel random variable under the assumptions of \autoref{mainthm}. Hence, by Slutsky's theorem, the convergence in distribution of $\lambda_1^{-1}(M_n  - b_q) $ to the standard Gumbel distribution holds if we verify that the first term tends to zero in probability. 

To this end, we define
\[
	\delta_j
	=\| \mathcal{X}_n(\omega_j)\|^2-\| \mathcal{X}_n^{d}(\omega_j)\|^2
	=\sum_{k>d}  \frac{1}{n}\Bigl| \sum_{t=1}^n  \langle X_t,v_k\rangle e^{-\ii t\omega_j}\Bigr|^2.
\]
For any $a>0$, we have
\begin{align}
\notag P( |M_n-M_n^{d} | >a ) &= P( M_n-M_n^{d}>a ) \\
\notag &=  P\Bigl( \max_{j=1,\dots,q} \Bigl\lbrace \| \mathcal{X}^{d}(\omega_j)\|^2+\delta_j\Bigr\rbrace -M_n^{d} >a \Bigr)  \\ 
\notag &\leq   P\Bigl( \max_{j=1,\dots,q} \delta_j >a\Bigr) \\ 
\notag &\leq  \sum_{j=1}^q \sum_{k>d}  P\Bigl( \frac{1}{n}\Bigl| \sum_{t=1}^n  \langle X_t,v_k\rangle \cos(t\omega_j) \Bigr|^2  > a\ell_k /2 \Bigr)\\ 
 &\qquad+ \sum_{j=1}^q \sum_{k>d}P\Bigl( \frac{1}{n}\Bigl| \sum_{t=1}^n  \langle X_t,v_k\rangle \sin(t\omega_j) \Bigr|^2 > a\ell_k/2  \Bigr)\label{eq:sinterm}.
\end{align}

Since $\{\langle X_t,v_k\rangle \cos(t\omega_j)\}_{1\le t\le n}$ are independent random variables with zero means and $\operatorname E|\langle X_t,v_k\rangle \cos(t\omega_j)|^r\le\operatorname E\|X_1\|^r<\infty$ for some $r>2$, Markov's inequality and Rosenthal's inequality (see \citet{rosenthal:1970}) lead to
\begin{align*}
&P\Bigl( \frac{1}{n}\Bigl| \sum_{t=1}^n  \langle X_t,v_k\rangle \cos(t\omega_j) \Bigr|^2  > a\ell_k /2 \Bigr)\le\\
&\quad\leq C_r \, (n a\ell_k /2)^{-r/2}\Big[\sum_{t=1}^nE|  \langle X_t,v_k\rangle|^r + \Bigl(\sum_{t=1}^n E|\langle X_t,v_k\rangle|^2\Bigr)^{r/2}\Bigr] \\
&\quad \leq C_r  \,   (n a\ell_k /2)^{-r/2} [n E|  \langle X_1,v_k\rangle|^r+  ( n\lambda_k )^{r/2}]  \\
 &  \quad \leq C_r (2/a)^{r/2}[  n^{1-r/2}\ell_k^{-r/2} E|  \langle X_1,v_k\rangle|^r  + (\lambda_k/\ell_k)^{r/2}],
\end{align*}
where $C_r$ is a constant depending only on $r$. \eqref{eq:sinterm} can be bounded in an analogous way and summation over $j$ and $k$ gives conditions \eqref{uglygood} and \eqref{condlemmMMb}. The proof is complete.
\end{proof}

\subsection{Domain of attraction of Gumbel distribution}\label{sec:Gum}
First, we show that, for fixed $d\ge1$, the hypoexponential distribution with strictly increasing parameters belongs to the domain of attraction of the Gumbel distribution.
\begin{lem}\label{hypofixeddim}
Let $d\ge1$ be fixed. Suppose that $\xi_1,\ldots,\xi_n$ are iid $\operatorname{Hypo}(\lambda_1^{-1},\ldots\lambda_d^{-1})$ random variables with $\lambda_k>\lambda_{k+1}$ for all $1\le k\le d$. Then
\begin{equation}\label{eq:convtogum}
	\lambda_1^{-1}(\max\{\xi_1,\ldots,\xi_n\}-b_n^d)\convd\mathcal G\quad\text{as}\quad n\to\infty,
\end{equation}
where $b_n^d$ is given by \eqref{defseq}.
\end{lem}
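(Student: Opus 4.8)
The plan is to use the standard Fisher--Tippett--Gnedenko criterion: to show that a distribution function $F$ lies in the Gumbel domain of attraction with given norming constants, it suffices to exhibit the tail behavior of $1-F$ and check the von Mises--type condition, or more directly, to show that $n(1-F(\lambda_1 x + b_n^d)) \to e^{-x}$ as $n\to\infty$ for every $x\in\mathbb R$; this is equivalent to \eqref{eq:convtogum}. So the whole problem reduces to a sufficiently sharp asymptotic expansion of the survival function of a $\operatorname{Hypo}(\lambda_1^{-1},\dots,\lambda_d^{-1})$ random variable.

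First I would recall the exact form of the hypoexponential survival function. Since $\xi_1 \sim \sum_{k=1}^d E_k$ with $E_k$ independent $\operatorname{Exp}(\lambda_k^{-1})$ and the rates $\lambda_k^{-1}$ distinct (guaranteed by $\lambda_k > \lambda_{k+1}$), a partial-fraction decomposition of the Laplace transform $\prod_{k=1}^d (1+\lambda_k s)^{-1}$ gives
\[
	P(\xi_1 > t) = \sum_{k=1}^d A_k e^{-t/\lambda_k}, \qquad A_k = \prod_{\substack{j=1\\ j\ne k}}^d \frac{1}{1-\lambda_j/\lambda_k}.
\]
Because $\lambda_1$ is the strictly largest rate-inverse, the term $k=1$ dominates: $P(\xi_1 > t) = A_1 e^{-t/\lambda_1}(1 + o(1))$ as $t\to\infty$, where $A_1 = \prod_{j=2}^d (1-\lambda_j/\lambda_1)^{-1} = \alpha_{1,d}$, exactly the constant appearing in \eqref{defseq}. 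The remaining terms are $O(e^{-t/\lambda_2})$ with $\lambda_2 < \lambda_1$, hence exponentially negligible relative to the leading term.

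Next I would substitute $t = \lambda_1 x + b_n^d$ with $b_n^d = \lambda_1\log(n\alpha_{1,d})$. Then $e^{-t/\lambda_1} = e^{-x} e^{-b_n^d/\lambda_1} = e^{-x}/(n\alpha_{1,d})$, so
\[
	n\, P(\xi_1 > \lambda_1 x + b_n^d) = n\cdot \alpha_{1,d}\cdot \frac{e^{-x}}{n\alpha_{1,d}}(1+o(1)) + n\cdot O\!\left(e^{-(\lambda_1 x + b_n^d)/\lambda_2}\right) \longrightarrow e^{-x},
\]
because the error term is $n \cdot O(n^{-\lambda_1/\lambda_2}) = o(1)$ since $\lambda_1/\lambda_2 > 1$. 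Then $P(\lambda_1^{-1}(\max_{i\le n}\xi_i - b_n^d) \le x) = (1 - P(\xi_1 > \lambda_1 x + b_n^d))^n = (1 - e^{-x}/n + o(1/n))^n \to \exp(-e^{-x})$, which is precisely the Gumbel law $\mathcal G$. The only mild technical point is to make the $o(1)$ uniform enough on compacts in $x$, but since $b_n^d\to\infty$ and $x$ ranges over a bounded set in each instance of the convergence, the expansion $P(\xi_1 > t) = \alpha_{1,d} e^{-t/\lambda_1} + O(e^{-t/\lambda_2})$ handles this without difficulty. I do not anticipate a genuine obstacle here: the main (and essentially only) step requiring care is the partial-fraction computation identifying the leading coefficient as $\alpha_{1,d}$, and verifying that the subdominant exponentials die fast enough after multiplication by $n$; everything else is the routine Poisson-approximation argument for maxima of iid variables.
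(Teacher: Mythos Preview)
Your proof is correct and follows essentially the same route as the paper: both rely on the explicit partial-fraction representation of the hypoexponential distribution, $1-F^{(d)}(t)=\sum_{k=1}^d\alpha_{k,d}e^{-t/\lambda_k}$ with $\alpha_{k,d}=\prod_{j\ne k}(1-\lambda_j/\lambda_k)^{-1}$, and identify the dominant term $\alpha_{1,d}e^{-t/\lambda_1}$. The only difference is that the paper, after writing down this representation, invokes Theorem~2 of \citet{kang:serfozo:1999} to read off the Gumbel limit and the norming constants, whereas you carry out that step explicitly by checking $n\,P(\xi_1>\lambda_1x+b_n^d)\to e^{-x}$ and applying the standard $(1-c/n+o(1/n))^n\to e^{-c}$ argument; your version is thus self-contained while the paper's is a one-line citation.
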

\begin{proof}[Proof of \autoref{hypofixeddim}]
Since $\lambda_k>\lambda_{k+1}$ for all $1\le k\le d$, the cdf of  $\operatorname{Hypo}(\lambda_1^{-1},\ldots\lambda_d^{-1})$ is given by 
\begin{equation}\label{eq:cdfofhypo}
	F^{(d)}(x)=\sum_{k=1}^d\alpha_{k,d}F_k(x)
\end{equation}
for $x\in\mathbb R$, where $\alpha_{k,d}=\prod_{j=1,j\neq k}^d(1-\lambda_j/\lambda_k)^{-1}$ and $F_k$ is the cdf of $\operatorname{Exp}(\lambda_k^{-1})$ for $k\ge1$. Hence, Theorem~2 of \citet{kang:serfozo:1999} implies that $\max\{\xi_1,\ldots,\xi_n\}$ is asymptotically distributed as the standard Gumbel distribution with the normalizing constants given by \eqref{defseq}.
\end{proof}
Under certain assumptions on the parameters of the hypoexponential distribution and the the growth rate of $d=d_n$, we show that convergence \eqref{eq:convtogum} still holds even if $d=d_n\to\infty$ as $n\to\infty$.

\begin{lem} \label{lemalpha} 
Suppose that condition \eqref{assLem5} is satisfied and that $d=d_n = O(n^{\gamma_0})$ as $n\to\infty$, with $\gamma_0$ satisfying \eqref{gamma0}. Then convergence \eqref{eq:convtogum} holds.
\end{lem}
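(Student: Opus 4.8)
The plan is to deduce the result from the tail behaviour of $\operatorname{Hypo}(\lambda_1^{-1},\dots,\lambda_{d_n}^{-1})$ together with the elementary criterion for iid maxima to converge to the Gumbel law. By \autoref{ass:eigen} the parameters are distinct, so the survival function is $\bar F^{(d)}(x)=\sum_{k=1}^{d}\alpha_{k,d}\,e^{-x/\lambda_k}$ with $\alpha_{k,d}=\prod_{j\ne k}(1-\lambda_j/\lambda_k)^{-1}$, exactly as in \eqref{eq:cdfofhypo}. Since $\xi_1,\dots,\xi_n$ are iid with this law,
\[
	P\bigl(\lambda_1^{-1}(\max_i\xi_i-b_n^{d_n})\le x\bigr)=\bigl(1-\bar F^{(d_n)}(b_n^{d_n}+\lambda_1 x)\bigr)^{n},
\]
so it is enough to prove that $n\,\bar F^{(d_n)}(b_n^{d_n}+\lambda_1 x)\to e^{-x}$ as $n\to\infty$ for every fixed $x$. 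With $b_n^{d_n}=\lambda_1\log(n\alpha_{1,d_n})$ from \eqref{defseq} and $b_n^{d_n}+\lambda_1x=\lambda_1\log(n\alpha_{1,d_n}e^{x})$, the $k=1$ term equals $n\alpha_{1,d_n}(n\alpha_{1,d_n}e^{x})^{-1}=e^{-x}$ exactly, so the whole problem reduces to showing that the sum over $k\ge2$ is $o(1)$.

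The heart of the argument --- and the step I expect to be the main obstacle --- is a bound on $|\alpha_{k,d}|$ that is uniform in $k$ and $d$. For $k=1$, every factor of $\alpha_{1,d}$ exceeds $1$ while $\sum_k\lambda_k<\infty$ gives $\log\alpha_{1,d}\le(1-\lambda_2/\lambda_1)^{-1}\sum_{j\ge2}\lambda_j/\lambda_1<\infty$, so $1\le\alpha_{1,d}\le\alpha_1<\infty$ for all $d$; this is what later lets us replace $(n\alpha_{1,d_n}e^{x})^{-\lambda_1/\lambda_k}$ by $(ne^{x})^{-\lambda_1/\lambda_k}$. For $k>k_0$ I would split $|\alpha_{k,d}|=\prod_{j\ne k}|1-\lambda_j/\lambda_k|^{-1}$ into the $k_0$ factors with $j\le k_0$ and the rest. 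The first group is bounded by a fixed constant $C_1$ built from the gaps near $k_0$ (e.g.\ $\lambda_j-\lambda_k\ge\lambda_{k_0}-\lambda_{k_0+1}$ in the relevant range and $\lambda_k\le\lambda_{k_0+1}$); this is precisely where the finitely many indices at which \eqref{assLem5} may fail get absorbed. For the remaining factors one has $j,k>k_0$, so \eqref{assLem5} can be chained to give $j\lambda_j\ge k\lambda_k$ when $k_0<j<k$ and $j\lambda_j\le k\lambda_k$ when $j>k$, hence $(\lambda_j/\lambda_k-1)^{-1}\le j/(k-j)$ and $(1-\lambda_j/\lambda_k)^{-1}\le j/(j-k)$ respectively, and the two products telescope, $\prod_{k_0<j<k}\tfrac{j}{k-j}\le k^{k_0}$ and $\prod_{j>k}\tfrac{j}{j-k}=\binom{d}{k}$. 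The finitely many $k\le k_0$ are handled separately: for each such $k$ the product $\prod_{j>k}(1-\lambda_j/\lambda_k)^{-1}$ converges (again by $\sum_k\lambda_k<\infty$), so $\sup_d|\alpha_{k,d}|<\infty$. Combining these cases gives $|\alpha_{k,d}|\le C_0\,k^{k_0}\binom{d}{k}$ for all $2\le k\le d$, with $C_0$ depending only on the eigenvalue sequence.

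Finally one inserts the bounds. Writing the $k$-th term ($k\ge2$) as $n|\alpha_{k,d_n}|(n\alpha_{1,d_n}e^{x})^{-\lambda_1/\lambda_k}$ and using $\alpha_{1,d_n}\ge1$, $|\alpha_{k,d_n}|\le C_0k^{k_0}\binom{d_n}{k}$ and $\binom{d_n}{k}\le d_n^{k}$, the $k\ge2$ part of $n\bar F^{(d_n)}(b_n^{d_n}+\lambda_1x)$ is at most $C_0\,n\sum_{k=2}^{d_n}k^{k_0}d_n^{k}(ne^{x})^{-\lambda_1/\lambda_k}$ --- note the nuisance factor $e^{-\lambda_1x/\lambda_k}$ has already merged with $n^{-\lambda_1/\lambda_k}$. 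Since the $\lambda_k$ are strictly decreasing and summable we have $k\lambda_k\to0$, hence $\tfrac1k(\lambda_1/\lambda_k-1)\to\infty$, so the minimum in \eqref{gamma0} is attained and there is a $\delta>0$ with $\gamma_0+\delta<\tfrac1k(\lambda_1/\lambda_k-1)$, equivalently $1+k\gamma_0-\lambda_1/\lambda_k<-k\delta$, for all $k\ge2$. Combined with $d_n=O(n^{\gamma_0})$, a direct computation (passing to $M:=ne^{x}\to\infty$) bounds each summand by $C\,k^{k_0}r^{k}$ with $r=r(n)\to0$, uniformly in $2\le k\le d_n$, so the sum over $k\ge2$ is $O(r^{2})=o(1)$. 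Hence $n\bar F^{(d_n)}(b_n^{d_n}+\lambda_1x)\to e^{-x}$, and the Gumbel criterion yields \eqref{eq:convtogum}.
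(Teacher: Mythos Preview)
Your argument is correct and follows essentially the same route as the paper: reduce to showing that the $k\ge2$ part of the hypoexponential tail is $o(1/n)$, bound $|\alpha_{k,d}|$ through the product representation together with the eventual monotonicity \eqref{assLem5}, and then close with the growth restriction \eqref{gamma0}. The only difference is organizational: the paper splits the sum at an index $k_1\ge k_0$ (chosen so that $d_n(d_n/n)^{k_1-1}\to0$), using $\lambda_1/\lambda_k\ge k$ for $k\ge k_1$ to control the tail and \eqref{gamma0} only for the finitely many $k<k_1$, whereas you carry a single bound $|\alpha_{k,d}|\le C_0k^{k_0}\binom{d}{k}$ for all $k\ge2$ and invoke \eqref{gamma0} uniformly via the observation that $\tfrac1k(\lambda_1/\lambda_k-1)\to\infty$ makes the minimum strict. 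Your packaging is slightly more streamlined (no case split), at the cost of the harmless extra factor $k^{k_0}$; the underlying mechanism is identical.
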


\begin{proof}[Proof of \autoref{lemalpha}]
Fix $x\in\mathbb R$. Using \eqref{eq:cdfofhypo} and the fact that $\sum_{k=1}^d\alpha_{k,d}=1$, we obtain
\begin{align}
	P(\lambda_1^{-1}(\max\{\xi_1,\ldots,\xi_n\}-b_n^d) \leq x)
	\notag&=F^{(d)}\big(\lambda_1 x + b_n^d \big)^n\\
	\label{Finf}&=\Bigl[1-\frac{e^{-x}}n-\sum_{k=2}^d\alpha_{k,d}\Big(\frac{e^{-x}}{n\alpha_{1,d}}\Bigr)^{\lambda_1/\lambda_k}\Bigr]^n.
\end{align}
We need to show that
\begin{equation}\label{impterm}
	\sum_{k=2}^d\alpha_{k,d}\Big(\frac{e^{-x}}{n\alpha_{1,d}}\Bigr)^{\lambda_1/\lambda_k}=o(n^{-1})
\end{equation}
as $n\to\infty$, which implies that \eqref{Finf} converges to the Gumbel distribution function.

Denote 
\[
a_{k,n}:=n\alpha_{k,d}\Big(\frac{e^{-x}}{n\alpha_{1,d}}\Bigr)^{\lambda_1/\lambda_k}
\]
and let $d_n = O(n^{\gamma_0})$ as $n\to\infty$ for some $0<\gamma_0<1$.
We first remark that
\begin{equation} \label{alpha1}
\frac{1}{\alpha_{1,d}} =   \prod_{j=2}^{d}(1-\lambda_j/\lambda_1) \leq (1-\lambda_d/\lambda_1)^{d-1} \leq 1.
\end{equation}
Observe that condition \eqref{assLem5} is equivalent to the following condition: there exists $k_0\ge1$ such that
\begin{equation}\label{eq:ratio}
	\lambda_j/\lambda_k\ge k/j
\end{equation}
for each $k\ge k_0$ and $1\le j\le k-1$. Let $k_1\geq  k_0$ be such that $d_n(d_n/n)^{k_1 -1}\to 0$, which is possible since $\gamma_0<1$. Denote
\begin{equation} \label{splitAn}
\underline{A}_n =  \sum_{k=2}^{k_1-1} a_{k,n}  \quad \text{ and }\quad \overline{A}_n =  \sum_{k=k_1}^{d_n}a_{k,n}.
\end{equation}

For $k\geq k_1$, using \eqref{eq:ratio} and the fact that $\lambda_j/\lambda_k\le k/j$ for $j>k$,
\begin{equation} \label{boundalphakp1}
|\alpha_{k,d}|
 \leq \prod_{j=1}^{k-1}\frac{j}{k-j}   \prod_{j=k+1}^{d}\frac{j}{j-k}  =    {d\choose k}\leq \biggl(\frac{e d}{k}\biggr)^k.
\end{equation}
Choose $n_0\ge1$ such that $e^{-x}/n\le 1$ for $n\ge n_0$. For $n\ge n_0$, using \eqref{alpha1}, \eqref{eq:ratio} and \eqref{boundalphakp1}, we obtain
\begin{align*}
	 |\overline{A}_n|
	 &\leq\sum_{k=k_1}^{d_n}n\Bigl(\frac{e d_n }{k}\Bigr)^k\Bigl(\frac{e^{-x}}{n}\Bigr)^{\lambda_1/\lambda_k }\\
	 &\leq  d_n\sum_{k=k_1}^{d_n} \Bigl(\frac{e^{1-x}}{k}\Bigr)^k \Bigl(\frac{d_n}{n}\Bigr)^{k -1}\\
	  &\leq d_n\Bigl(\frac{d_n}{n}\Bigr)^{k_1 -1} \sum_{k=1}^{\infty} \Bigl(\frac{e^{1-x}}{k}\Bigr)^k,
\end{align*}
where $d_n(d_n/n)^{k_1 -1}\to 0$ as $n\to\infty$.

Next, for $k<k_1$, set $\nu_k=\prod_{j=1,j\neq k}^{k_1-1}|1-\lambda_j/\lambda_k|^{-1}$. Since $\lambda_j/\lambda_k\le k/j$ for $j\ge k_1$ and $k<k_1$ using \eqref{assLem5}, we obtain
\begin{align}
	|\alpha_{k,d}|
	 \notag&=  \nu_k\prod_{j=k_1}^{d}(1-\lambda_j/\lambda_k)^{-1}\\
	  \notag&\leq   \nu_k\prod_{j=k_1}^{d}\frac{j}{j-k}\\
	   \notag&= \nu_k \cdot  \frac{ d! (k_1-k-1)!}{(k_1-1)! (d-k)!}\\
	\label{eq:secondbound}&\leq \nu_k \cdot d^k \cdot  \frac{  (k_1-k-1)!}{(k_1-1)!}.
\end{align}
Thus, using \eqref{alpha1} and \eqref{eq:secondbound},
\begin{equation}\label{eq:finalterm}
|\underline{A}_n|  \leq  \sum_{k=2}^{k_1} \nu_k \cdot d^k_n \,  \cdot  \frac{  (k_1-k-1)!}{(k_1-1)!}\cdot  e^{-x\lambda_1/\lambda_k}\biggl(\frac1n\biggr)^{\lambda_1/\lambda_k -1}  =   O\Big(\max_{2\leq k\leq k_1}\Bigl\{ \frac{ d_n^k}{n^{\lambda_1/\lambda_k -1}}\Bigr\} \Bigr)
\end{equation}
as $n\to\infty$. If $\gamma_0<\min_{2\leq k\leq k_1}\{k^{-1}(\lambda_1/\lambda_k -1)\}$, \eqref{eq:finalterm} tends to $0$ as $n\to\infty$. The proof is complete.
\end{proof}

\subsection{Linear processes}\label{sec:lp}
The method for transferring the iid setting to linear processes is similar as for the central limit theorem and the functional central limit theorem for linear processes under the absolute summability of $a_k$'s (see e.g.\ \citet{merleved:peligrad:utev:1997} and \citet{rackauskas:suquet:2010}).
\begin{lem}\label{lem:boundtransfer}
Suppose that $\{X_t\}_{t\in\mathbb Z}$ is a linear process defined by \eqref{eq:linproc} such that $\sum_{k=-\infty}^\infty\|a_k\|<\infty$. Then for $n\geq 3$ we have
\begin{equation}\label{e:lin1}
	\operatorname E\max_{1\le j\le q}\Bigl\|\sum_{t=1}^nX_te^{-it\omega_j}\Bigr\|^2
	\le\Bigl(\sum_{k=-\infty}^\infty\|a_k\|\Bigr)^2\operatorname E\max_{1\le j\le q}\Bigl\|\sum_{t=1}^n\varepsilon_te^{-it\omega_j}\Bigr\|^2.
\end{equation}
\end{lem}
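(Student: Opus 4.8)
The plan is to expand $X_t = \sum_{k} a_k(\varepsilon_{t-k})$ inside the DFT and then use the triangle inequality together with the sub-additivity of the operator norm. First I would write
\[
	\sum_{t=1}^n X_t e^{-\ii t\omega_j}
	=\sum_{t=1}^n\sum_{k=-\infty}^\infty a_k(\varepsilon_{t-k})e^{-\ii t\omega_j}
	=\sum_{k=-\infty}^\infty a_k\Bigl(\sum_{t=1}^n\varepsilon_{t-k}e^{-\ii t\omega_j}\Bigr),
\]
where the interchange of the two sums is justified by absolute summability of $\{\|a_k\|\}_{k\in\mathbb Z}$ and $\operatorname E\|\varepsilon_1\|<\infty$ (so the inner object is a well-defined $H$-valued random element). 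Factoring out $e^{-\ii k\omega_j}$ from the inner sum, the bracket becomes $e^{\ii k\omega_j}$ times $\sum_{s=1-k}^{n-k}\varepsilon_s e^{-\ii s\omega_j}$, a ``shifted'' partial DFT of the innovations.

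The key step is then to control this shifted sum by the un-shifted one. Since $\omega_j=2\pi j/n$ is a Fourier frequency, the exponentials satisfy $e^{-\ii(s+n)\omega_j}=e^{-\ii s\omega_j}$, so the map $s\mapsto \varepsilon_s e^{-\ii s\omega_j}$ is, after identifying the innovation indices modulo $n$, $n$-periodic in the phase; consequently any block of $n$ consecutive terms $\sum_{s=1-k}^{n-k}$ rearranges to $\sum_{s=1}^{n}\varepsilon_{s}^{(k)}e^{-\ii s\omega_j}$ for a suitably relabelled iid copy, or more simply one argues directly that the distribution of $\bigl\|\sum_{s=1-k}^{n-k}\varepsilon_s e^{-\ii s\omega_j}\bigr\|$ equals that of $\bigl\|\sum_{s=1}^{n}\varepsilon_s e^{-\ii s\omega_j}\bigr\|$ because the $\varepsilon_s$ are iid and the frequency is a multiple of $2\pi/n$. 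Taking operator norms and then the maximum over $j$, the triangle inequality gives
\[
	\max_{1\le j\le q}\Bigl\|\sum_{t=1}^nX_te^{-\ii t\omega_j}\Bigr\|
	\le\sum_{k=-\infty}^\infty\|a_k\|\;\max_{1\le j\le q}\Bigl\|\sum_{s=1-k}^{n-k}\varepsilon_s e^{-\ii s\omega_j}\Bigr\|.
\]

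Finally I would square both sides, apply the Cauchy--Schwarz inequality in the form $\bigl(\sum_k \|a_k\| X_k\bigr)^2\le\bigl(\sum_k\|a_k\|\bigr)\bigl(\sum_k\|a_k\|X_k^2\bigr)$ with $X_k=\max_j\|\sum_{s=1-k}^{n-k}\varepsilon_s e^{-\ii s\omega_j}\|$, take expectations, and use the distributional identity above to replace $\operatorname E X_k^2$ by the single quantity $\operatorname E\max_{1\le j\le q}\bigl\|\sum_{t=1}^n\varepsilon_t e^{-\ii t\omega_j}\bigr\|^2$ for every $k$. Pulling $\sum_k\|a_k\|$ out of the sum once more yields exactly \eqref{e:lin1}. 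The main obstacle is making the ``shifted DFT has the same law as the unshifted DFT'' step fully rigorous: one must handle the wrap-around of the index range $\{1-k,\dots,n-k\}$ versus $\{1,\dots,n\}$ carefully, using both the $n$-periodicity of $e^{-\ii s\omega_j}$ in $s$ and the exchangeability of the iid sequence $\{\varepsilon_s\}$; a clean way is to note that for fixed $j$ the random vector $(\varepsilon_s e^{-\ii s\omega_j})_{s}$, viewed cyclically modulo $n$, has a law invariant under index shifts, so the norm of any $n$-term cyclic block is distributed as the norm of $\sum_{s=1}^n\varepsilon_s e^{-\ii s\omega_j}$, uniformly in $j$.
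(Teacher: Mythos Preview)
Your proof is correct and follows the same route as the paper: expand $X_t$ via the linear representation, apply the triangle inequality to get $\max_j\|\cdot\|\le\sum_k\|a_k\|\max_j\|\sum_{s=1-k}^{n-k}\varepsilon_s e^{-\ii s\omega_j}\|$, then square, use Cauchy--Schwarz in the form $(\sum_k\|a_k\|X_k)^2\le(\sum_k\|a_k\|)\sum_k\|a_k\|X_k^2$, and invoke stationarity of the innovations so that $\operatorname E X_k^2$ is the same for every $k$. The paper's write-up is much terser --- it stops at the triangle-inequality display and declares the proof complete --- so you are really just spelling out the two steps the authors leave implicit.

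One small clean-up: the ``cyclic block'' language at the end is a distraction and not quite right, since the indices $\{1-k,\dots,n-k\}$ involve innovations \emph{outside} $\{1,\dots,n\}$ rather than a cyclic rearrangement of them. Your simpler argument earlier is the correct one and all you need: writing $\sum_{s=1-k}^{n-k}\varepsilon_s e^{-\ii s\omega_j}=e^{\ii k\omega_j}\sum_{t=1}^n\varepsilon_{t-k}e^{-\ii t\omega_j}$ and using that $(\varepsilon_{1-k},\dots,\varepsilon_{n-k})\stackrel{d}{=}(\varepsilon_1,\dots,\varepsilon_n)$ gives the distributional identity jointly over $j=1,\dots,q$ (and in fact for any frequencies, not just the Fourier ones). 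Drop the cyclic paragraph and you are done.
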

\begin{proof}
The left-hand side of \eqref{e:lin1} is given as
\begin{align*}
	&\operatorname E\max_{1\le j\le q}\Bigl\|\sum_{k=-\infty}^\infty a_k\Bigl(\sum_{s=1-k}^{n-k}\varepsilon_se^{-is\omega_j}\Bigr)e^{-ik\omega_j}\Bigr\|^2\le\\
	&\le\operatorname E\max_{1\le j\le q}\Bigl(\sum_{k=-\infty}^\infty\Bigl\|a_k\Bigl(\sum_{s=1-k}^{n-k}\varepsilon_se^{-is\omega_j}\Bigr)e^{-ik\omega_j}\Bigr\|\Bigr)^2\\
	&\le\operatorname E\max_{1\le j\le q}\Bigl(\sum_{k=-\infty}^\infty\|a_k\|\Bigl\|\sum_{s=1-k}^{n-k}\varepsilon_se^{-is\omega_j}\Bigr\|\Bigr)^2.
\end{align*}
The proof is complete.
\end{proof}

\begin{proof}[Proof of \autoref{lem:lp}]
Some little algebra shows that
\begin{align*}
	\mathcal X_n(\omega)-A(\omega)\mathcal{E}_n(\omega)
	&=n^{-1/2}\sum_{k=-\infty}^\infty a_k\Bigl(\sum_{s=1-k}^{n-k}\varepsilon_se^{-is\omega}-\sum_{t=1}^n\varepsilon_te^{-it\omega}\Bigr)e^{-ik\omega}\\
	&=n^{-1/2}\sum_{k=-\infty}^\infty a_k(\Delta_{nk}(\omega))e^{-ik\omega}.
\end{align*}
Since $\Delta_{nk}(\omega)$ has $2(|k|\wedge n)$ summands, it follows from \autoref{thm:Ologn}
that
\[
	\operatorname E\max_{1\le j\le q}\|\Delta_{nk}(\omega_j)\|
	\le(\operatorname E\max_{1\le j\le q}\|\Delta_{nk}(\omega_j)\|^2)^{1/2}
	\le(2C\{|k|\log(2|k|)\wedge n\log(2n)\})^{1/2},
\]
where $C>0$ is some constant. 
Hence,
\begin{align*}
	&\operatorname E\max_{1\le j\le q}\|\mathcal X_n(\omega_j)-A(\omega_j)\mathcal E_n(\omega_j)\|\le\\
	&\le n^{-1/2}\sum_{k=-\infty}^\infty\|a_k\|\operatorname E\max_{1\le j\le q}\|\Delta_{nk}(\omega_j)\|\\  	&\ll n^{-1/2}\Bigl[\sum_{|k|<n\atop k\neq 0}(|k|\log(2|k|))^{1/2}\|a_k\|	+(n\log(2n))^{1/2}\sum_{|k|\ge n}\|a_k\|\Bigr]\\
	&\ll \log^{-1/2}(2n)\Bigl[\sum_{|k|<n\atop k\neq 0}\Bigl[\frac{|k|}n\frac{\log(2n)}{\log(2|k|)}\Bigr]^{1/2}\log(2|k|)\|a_k\|+\sum_{|k|\ge n}\log(2|k|)\|a_k\|\Bigr].
\end{align*}
It follows that $\operatorname E\max_{1\le j\le q}\|\mathcal X_n(\omega_j)-A(\omega_j)\mathcal E_n(\omega_j)\|=o(\log^{-1/2}n)$ as $n\to\infty$, which is a sufficient condition for \eqref{eq:lp_dft}. Since
\begin{align*}
	&\mathcal X_n(\omega)\otimes\mathcal X_n(\omega)
	-A(\omega)\mathcal E_n(\omega)\otimes A(\omega)\mathcal E_n(\omega)=\\
	&=(\mathcal X_n(\omega)-A(\omega)\mathcal E_n(\omega))\otimes(\mathcal X_n(\omega)-A(\omega)\mathcal E_n(\omega))\\
	&+(\mathcal X_n(\omega)-A(\omega)\mathcal E_n(\omega))\otimes A(\omega)\mathcal E_n(\omega)\\
	&+A(\omega)\mathcal E_n(\omega)\otimes(\mathcal X_n(\omega)-A(\omega)\mathcal E_n(\omega)),
\end{align*}
for $\omega\in[-\pi,\pi]$ and $\|x\otimes y\|=\|x\|\|y\|$ for $x,y\in H$, we obtain
\begin{align}
	&\max_{1\le j\le q}\|\mathcal X_n(\omega_j)\otimes\mathcal X_n(\omega_j)
	-A(\omega_j)\mathcal E_n(\omega_j)\otimes A(\omega_j)\mathcal E_n(\omega_j)\|_{\mathcal S}\notag\\
	&\quad\le\max_{1\le j\le q}\|\mathcal X_n(\omega_j)-A(\omega_j)\mathcal E_n(\omega_j)\|^2\notag\\
	&\qquad+2\max_{1\le j\le q}\|A(\omega_j)\mathcal E_n(\omega_j)\|\max_{1\le j\le q}\|\mathcal X_n(\omega_j)-A(\omega_j)\mathcal E_n(\omega_j)\|.\label{eq:lp_bound}
\end{align}
Also,
\begin{align}
	\operatorname E\max_{1\le j\le q}\|A(\omega_j)\mathcal E_n(\omega_j)\|
	&\le\max_{1\le j\le q}\|A(\omega_j)\|(\operatorname E\max_{1\le j\le q}\|\mathcal E_n(\omega_j)\|^2)^{1/2}\notag\\
	&\le\max_{1\le j\le q}\|A(\omega_j)\|(C\log n)^{1/2}.\label{eq:lp_max}
\end{align}
Then \eqref{eq:lp_bound} together with \eqref{eq:lp_dft} and \eqref{eq:lp_max} implies \eqref{eq:lp_pdg}. The proof is complete.
\end{proof}

\begin{proof}[Proof of \autoref{lem:lpfilter}]
We have that
\begin{align*}
	&\Bigl|\max_{1\le j\le q}\|A^{-1}(\omega_j)\mathcal X_n(\omega_j)\|^2-\max_{1\le j\le q}\|\mathcal E_n(\omega_j)\|^2\Bigr|\\
	&\le\max_{1\le j\le q}\bigl|\|A^{-1}(\omega_j)\mathcal X_n(\omega_j)\|^2-\|\mathcal E_n(\omega_j)\|^2\bigr|\\
	&=\max_{1\le j\le q}\{
	(
	\|A^{-1}(\omega_j)\mathcal X_n(\omega_j)\|-\|\mathcal E_n(\omega_j)\|
	)(\|A^{-1}(\omega_j)\mathcal X_n(\omega_j)\|+\|\mathcal E_n(\omega_j)\|)
	\}\\
	&\le\max_{1\le j\le q}\|A^{-1}(\omega_j)\mathcal X_n(\omega_j)-\mathcal E_n(\omega_j)\|\Bigl(\max_{1\le j\le q}\Bigl\|A^{-1}(\omega_j)\mathcal X_n(\omega_j)\|+\max_{1\le j\le q}\|\mathcal E_n(\omega_j)\|\Bigr)\\
	&\le\sup_{\omega\in[0,\pi]}\|A^{-1}(\omega)\|\max_{1\le j\le q}\|\mathcal X_n(\omega_j)-A(\omega_j)\mathcal E_n(\omega_j)\|\\
	&\qquad\times\Bigl(\ \sup_{\omega\in[0,\pi]}\|A^{-1}(\omega)\|\max_{1\le j\le q}\|\mathcal X_n(\omega_j)\|+\max_{1\le j\le q}\|\mathcal E_n(\omega_j)\|\Bigr).
\end{align*}
According to \autoref{lem:lp},
\[
	\max_{1\le j\le q}\|\mathcal X_n(\omega_j)-A(\omega_j)\mathcal E_n(\omega_j)\|
	=o_p(\log^{-1/2}n)
\]
as $n\to\infty$. Also, it follows from \autoref{lem:boundtransfer} and \autoref{thm:Ologn} that
\[
	\max_{1\le j\le q}\|\mathcal E_n(\omega_j)\|=O_p(\log^{1/2}n)
	\quad\text{and}\quad
	\max_{1\le j\le q}\|\mathcal X_n(\omega_j)\|=O_p(\log^{1/2}n)
\]
as $n\to\infty$, which completes the proof.
\end{proof}

\begin{proof}[Proof of \autoref{thm:mv}]
Denote $Z_t=\Sigma^{-1/2}\varepsilon_t$ with $t=1,\ldots,n$ so that $\mathcal Z_n(\omega)=\Sigma^{-1/2}\mathcal E_n(\omega)$ for $\omega\in[-\pi,\pi]$. Similarly as in the proof of \autoref{lem:lpfilter},
\begin{align*}
	&\Bigl|\max_{1\le j\le q}\|B^{-1}(\omega_j)\mathcal X_n(\omega_j)\|^2-\max_{1\le j\le q}\|\mathcal Z_n(\omega_j)\|^2\Bigr|\\
	&\le\|\Sigma^{-1/2}\|\sup_{\omega\in[0,\pi]}\|A^{-1}(\omega)\|\max_{1\le j\le q}\|\mathcal X_n(\omega_j)-A(\omega_j)\mathcal X_n(\omega_j)\|\\
	&\qquad\times\Bigl(\|\Sigma^{-1/2}\|\sup_{\omega\in[0,\pi]}\|A^{-1}(\omega)\|\max_{1\le j\le q}\|\mathcal X_n(\omega_j)\|+\max_{1\le j\le q}\|\mathcal Z_n(\omega_j)\|\Bigr).
\end{align*}
Using \autoref{lem:lp}, \autoref{lem:boundtransfer} and \autoref{thm:Ologn},
\[
	\Bigl|\max_{1\le j\le q}\|B^{-1}(\omega_j)\mathcal X_n(\omega_j)\|^2-\max_{1\le j\le q}\|\mathcal Z_n(\omega_j)\|^2\Bigr|
	=o_p(1),
\]
as $n\to\infty$ and we use \autoref{thm:finite} to conclude.
\end{proof}

\subsection{Application to FAR(1) models}\label{s:FAR1}
The proof of \autoref{th:appl} is a simple consequence of the following three lemmas. We remark that we can work with $\mathcal{X}_n(\omega_j)$ instead of $\mathcal{Y}_n(\omega_j)$, as those quantities are identical under $H_0$ for any $j=1,\ldots,q$.
\begin{lem}\label{l:appl1}
Under Assumption~\ref{ass:rho} we have
$$
\Bigl|\max_{1\le j\le q}\|(I-e^{-\ii\omega_j}\widehat\rho\,)\mathcal X_n(\omega_j)\|^2-\max_{1\le j\le q}\|(I-e^{-\ii\omega_j}\rho\,)\mathcal X_n(\omega_j)\|^2\Bigr|=o_P\Bigl(\frac{\log n}{a_n}\Bigr).
$$
\end{lem}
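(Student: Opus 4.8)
The plan is to expand the difference of the two maxima using the elementary bound $|\max_j a_j - \max_j b_j| \le \max_j |a_j - b_j|$ and then to control $\big|\|(I-e^{-\ii\omega_j}\widehat\rho\,)\mathcal X_n(\omega_j)\|^2 - \|(I-e^{-\ii\omega_j}\rho\,)\mathcal X_n(\omega_j)\|^2\big|$ uniformly in $j$. Writing $u_j = (I-e^{-\ii\omega_j}\rho\,)\mathcal X_n(\omega_j)$ and $v_j = (I-e^{-\ii\omega_j}\widehat\rho\,)\mathcal X_n(\omega_j)$, so that $v_j - u_j = -e^{-\ii\omega_j}(\widehat\rho-\rho)\mathcal X_n(\omega_j)$, I would use the factorization $\big|\|v_j\|^2 - \|u_j\|^2\big| = \big|\|v_j\| - \|u_j\|\big|\cdot\big(\|v_j\| + \|u_j\|\big) \le \|v_j - u_j\|\,\big(\|v_j - u_j\| + 2\|u_j\|\big)$.

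Next I would estimate the two factors. For $\|v_j - u_j\|$: since $\|v_j-u_j\| \le \|\widehat\rho-\rho\|\cdot\|\mathcal X_n(\omega_j)\|$, taking the maximum over $j$ gives
\[
	\max_{1\le j\le q}\|v_j - u_j\| \le \|\widehat\rho-\rho\|\,\max_{1\le j\le q}\|\mathcal X_n(\omega_j)\|.
\]
By Assumption~\ref{ass:rho} we have $\|\widehat\rho-\rho\| = o_P(a_n^{-1})$, and since $X_t$ is an FAR(1) process (hence a linear process with absolutely summable, indeed logarithmically summable, coefficients) \autoref{lem:boundtransfer} together with \autoref{thm:Ologn} gives $\max_{1\le j\le q}\|\mathcal X_n(\omega_j)\| = O_P(\log^{1/2}n)$. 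Hence $\max_j\|v_j-u_j\| = o_P(\log^{1/2}n / a_n)$. For $\max_j\|u_j\|$, note $\|u_j\| \le \|I-e^{-\ii\omega_j}\rho\|\,\|\mathcal X_n(\omega_j)\| \le (1+\|\rho\|)\|\mathcal X_n(\omega_j)\|$, so $\max_j\|u_j\| = O_P(\log^{1/2}n)$ as well. Combining,
\[
	\max_{1\le j\le q}\big|\|v_j\|^2-\|u_j\|^2\big|
	\le \max_j\|v_j-u_j\|\big(\max_j\|v_j-u_j\| + 2\max_j\|u_j\|\big)
	= o_P\!\Bigl(\frac{\log n}{a_n^2}\Bigr) + o_P\!\Bigl(\frac{\log n}{a_n}\Bigr),
\]
and since $a_n \ge \log n \to\infty$ the first term is dominated by the second, giving the claimed rate $o_P(\log n / a_n)$.

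The only genuine subtlety is making sure that $X_t = \rho(X_{t-1})+\varepsilon_t$ really does satisfy the summability hypotheses of \autoref{lem:boundtransfer} (absolute summability of the $a_k$) and of \autoref{lem:lp} if needed — but this is immediate from $\|\rho\|<1$ (as assumed in Assumption~\ref{ass:rho}), which forces $a_k=\rho^k$ to decay geometrically, so $\sum_k\|a_k\|<\infty$ and a fortiori $\sum_{k\ne 0}\log(|k|)\|a_k\|<\infty$. Everything else is the routine $|\max-\max|$ inequality plus the product rule for $\|\cdot\|^2$; I do not anticipate any real obstacle. One should only be slightly careful that the operator norm $\|I-e^{-\ii\omega_j}\rho\|$ is bounded uniformly in $j$, which again follows from $\|\rho\|<1$.
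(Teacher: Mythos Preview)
Your proposal is correct and follows essentially the same approach as the paper: bound $|\max_j a_j-\max_j b_j|$ by $\max_j|a_j-b_j|$, expand the difference of squared norms, and factor out $\|\widehat\rho-\rho\|=o_P(a_n^{-1})$ together with $\max_j\|\mathcal X_n(\omega_j)\|^2=O_P(\log n)$. The only cosmetic difference is that the paper justifies the $O_P(\log n)$ bound via \eqref{e:diffdft} and \autoref{fancythm}, whereas you invoke \autoref{lem:boundtransfer} and \autoref{thm:Ologn}; both routes are valid.
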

\begin{proof}
Let $$v=(I-e^{-\ii\omega_j}\rho\,)\mathcal X_n(\omega_j)\quad\text{and} \quad h=e^{-\ii\omega_j}(\rho-\widehat\rho\,)\mathcal X_n(\omega_j).$$
Then, using $\|v+h\|^2-\|v\|^2=\|h\|^2+\langle v,h\rangle+\overline{\langle v,h\rangle}$ and thus
$$
|\|v+h\|^2-\|v\|^2| \leq \|h\|^2+2\|h\|\|v\|
$$ we get
\begin{align*}
&\Bigl|\max_{1\le j\le q}\|(I-e^{-\ii\omega_j}\widehat\rho\,)\mathcal X_n(\omega_j)\|^2-\max_{1\le j\le q}\|(I-e^{-\ii\omega_j}\rho)\mathcal X_n(\omega_j)\|^2\Bigr|\\
&\quad\leq \max_{1\le j\le q}\bigl|\|(I-e^{-\ii\omega_j}\widehat\rho\,)\mathcal X_n(\omega_j)\|^2-\|(I-e^{-\ii\omega_j}\rho)\mathcal X_n(\omega_j)\|^2\bigr|\\
&\quad\leq (\|\widehat \rho-\rho\|^2+2(1+\|\rho\|)\|\widehat\rho-\rho\|)\max_{1\le j\le q}\|\mathcal X_n(\omega_j)\|^2.
\end{align*}
By \eqref{e:diffdft} $\max_{1\le j\le q}\|\mathcal X_n(\omega_j)\|=O_P(\max_{1\le j\le q}\|\mathcal E_n(\omega_j)\|)$, which in turn is $O_P(\log n)$ by \autoref{fancythm}.
\end{proof}

\begin{lem}\label{l:appl2}
Under Assumption~\ref{ass:rho} we have
$$
\max_{j\geq 1}|\lambda_j-\hat\lambda_j|=o_P\Bigl(\frac{1}{a_n}\Bigr).
$$
\end{lem}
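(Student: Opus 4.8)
The plan is to reduce the claim to a Hilbert--Schmidt bound on the empirical covariance operator of the residuals and then to peel off the error caused by estimating $\rho$. Writing $\widehat\Sigma_\varepsilon=\frac1{n-1}\sum_{k=2}^n\hat\varepsilon_k\otimes\hat\varepsilon_k$ and $\Sigma_\varepsilon=\operatorname{Var}(\varepsilon_0)=\operatorname E[\varepsilon_0\otimes\varepsilon_0]$, both operators are self-adjoint, positive and compact, so the standard eigenvalue perturbation inequality (the same one already invoked in the proof of \autoref{mainthm0}; see also \citet{bosq:2000}) yields $\max_{j\ge1}|\lambda_j-\hat\lambda_j|\le\|\widehat\Sigma_\varepsilon-\Sigma_\varepsilon\|_{\mathcal S}$. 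It therefore suffices to prove $\|\widehat\Sigma_\varepsilon-\Sigma_\varepsilon\|_{\mathcal S}=o_P(a_n^{-1})$.

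For this I would use that under $\mathcal H_0$ we have $\hat\varepsilon_k=X_k-\widehat\rho(X_{k-1})=\varepsilon_k+\Delta_n(X_{k-1})$ with $\Delta_n:=\rho-\widehat\rho$, and expand the rank-one operators:
\begin{align*}
	\widehat\Sigma_\varepsilon-\Sigma_\varepsilon
	&=\Bigl(\tfrac1{n-1}\sum_{k=2}^n\varepsilon_k\otimes\varepsilon_k-\Sigma_\varepsilon\Bigr)\\
	&\quad+\tfrac1{n-1}\sum_{k=2}^n\bigl(\varepsilon_k\otimes\Delta_nX_{k-1}+\Delta_nX_{k-1}\otimes\varepsilon_k+\Delta_nX_{k-1}\otimes\Delta_nX_{k-1}\bigr).
\end{align*}
The first term is the centred empirical mean of the iid Hilbert--Schmidt-valued variables $\varepsilon_k\otimes\varepsilon_k$, which have finite second moment $\operatorname E\|\varepsilon_0\otimes\varepsilon_0\|_{\mathcal S}^2=\operatorname E\|\varepsilon_0\|^4<\infty$ (this is where $r\ge4$ enters), so a Chebyshev-type second-moment bound gives it size $O_P(n^{-1/2})$. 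Each mixed term is controlled in $\|\cdot\|_{\mathcal S}$ by the triangle inequality, the identity $\|x\otimes y\|_{\mathcal S}=\|x\|\,\|y\|$, the bound $\|\Delta_nX_{k-1}\|\le\|\Delta_n\|\,\|X_{k-1}\|$ and Cauchy--Schwarz, leading to the estimate $\|\Delta_n\|\bigl(\tfrac1{n-1}\sum_{k=2}^n\|\varepsilon_k\|^2\bigr)^{1/2}\bigl(\tfrac1{n-1}\sum_{k=2}^n\|X_{k-1}\|^2\bigr)^{1/2}$; by \autoref{ass:rho} $\|\Delta_n\|=o_P(a_n^{-1})$, and since $\{X_t\}$ is a stationary ergodic FAR$(1)$ with $\operatorname E\|X_0\|^2<\infty$ (which follows from $\|\rho\|<1$ and $\operatorname E\|\varepsilon_0\|^r<\infty$) both averages are $O_P(1)$ by the law of large numbers, so these terms are $o_P(a_n^{-1})$. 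The last term is at most $\|\Delta_n\|^2\tfrac1{n-1}\sum_{k=2}^n\|X_{k-1}\|^2=o_P(a_n^{-2})=o_P(a_n^{-1})$ since $a_n\to\infty$. Adding up, $\|\widehat\Sigma_\varepsilon-\Sigma_\varepsilon\|_{\mathcal S}=O_P(n^{-1/2})+o_P(a_n^{-1})$.

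The only genuinely quantitative ingredient is the $O_P(n^{-1/2})$ rate for the empirical covariance of the innovations, and this is the one place where the fourth-moment hypothesis $r\ge4$ is truly needed and where one must reconcile $n^{-1/2}$ with the truncation level $a_n$: since in the FAR$(1)$ applications the convergence rate of $\widehat\rho$, hence $a_n^{-1}$, is of strictly larger order than $n^{-1/2}$ (see \citet{guillas:2001}), we get $O_P(n^{-1/2})=o_P(a_n^{-1})$ and the bound is complete. Everything else is a routine splitting of the expansion, so I do not expect any real obstacle beyond bookkeeping the four terms and checking the moment and ergodicity properties of $\{X_t\}$ needed for the laws of large numbers, both of which are immediate from \autoref{ass:rho}.
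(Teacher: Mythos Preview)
Your argument is essentially the paper's own proof: reduce to a covariance-operator bound via the eigenvalue perturbation inequality (the paper cites Weyl's lemma), split $\widehat\Sigma_\varepsilon-\Sigma_\varepsilon$ into the iid piece $\tfrac1{n-1}\sum\varepsilon_k\otimes\varepsilon_k-\Sigma_\varepsilon=O_P(n^{-1/2})$ (using the fourth moment) and cross terms controlled by $\|\widehat\rho-\rho\|$ together with the ergodic/strong law bounds $\tfrac1n\sum\|X_{t-1}\|^2=O_P(1)$ and $\tfrac1n\sum\|\varepsilon_t\|^2=O_P(1)$. The only cosmetic difference is that the paper bounds via $\|\hat\varepsilon_t-\varepsilon_t\|$ rather than expanding the tensors, and for the final step it simply uses the standing constraint $a_n\le\sqrt n$ from \autoref{ass:rho} instead of appealing to external FAR(1) rate results.
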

\begin{proof}
By Weyl's lemma it suffices to show that
$\big\|\frac{1}{n}\sum_{t=1}^n \hat\varepsilon_t\otimes\hat\varepsilon_t-\Sigma\big\|=o_P(a_n^{-1})$. (For the sake of simplicity take averages from 1 to $n$.) Since we require 4 moments for the $\varepsilon_t$ it follows that $\big\|\frac{1}{n}\sum_{t=1}^n \varepsilon_t\otimes\varepsilon_t-\Sigma\big\|=O_P(n^{-1/2})$. Hence we have that
\begin{align*}
&\big\|\frac{1}{n}\sum_{t=1}^n \hat\varepsilon_t\otimes\hat\varepsilon_t-\Sigma\big\| \leq \frac{1}{n}\sum_{t=1}^n \big\|\hat\varepsilon_t\otimes\hat\varepsilon_t-\varepsilon_t\otimes\varepsilon_t\big\|+\big\|\frac{1}{n}\sum_{t=1}^n \varepsilon_t\otimes\varepsilon_t-\Sigma\big\|\\
&\quad\leq \frac{1}{n}\sum_{t=1}^n \|\hat\varepsilon_t\otimes\hat\varepsilon_t-\varepsilon_t\otimes\varepsilon_t\big\|+O_P(n^{-1/2})\\
&\quad\leq \frac{2}{n}\sum_{t=1}^n \|\varepsilon_t\|\|\hat\varepsilon_t-\varepsilon_t\|+\frac{1}{n}\sum_{t=1}^n\|\hat\varepsilon_t-\varepsilon_t\|^2+O_P(n^{-1/2})\\
&\quad\leq 2\Bigl(\frac{1}{n}\sum_{t=1}^n \|\varepsilon_t\|^2\Bigr)^{1/2}\Bigl(\frac{1}{n}\sum_{t=1}^n \|\hat\varepsilon_t-\varepsilon_t\|^2\Bigr)^{1/2}+\frac{1}{n}\sum_{t=1}^n\|\hat\varepsilon_t-\varepsilon_t\|^2+O_P(n^{-1/2}).
\end{align*}
Now we have 
$$
\frac{1}{n}\sum_{t=1}^n\|\hat\varepsilon_t-\varepsilon_t\|^2=\frac{1}{n}\sum_{t=1}^n \|(\widehat\rho -\rho)X_{t-1}\|^2\leq \|\widehat\rho -\rho\|^2\times\frac{1}{n}\sum_{t=1}^n \| X_{t-1}\|^2=o_P(a_n^{-2}).
$$
Here we used that by the ergodic theorem $\frac{1}{n}\sum_{t=1}^n \|X_{t-1}\|^2=O_P(1)$ and by the law of large numbers $\frac{1}{n}\sum_{t=1}^n\|\varepsilon_t\|^2=O_P(1)$. Hence the claim follows.
\end{proof}

\begin{lem}\label{l:appl3}
Under \autoref{ass:eigen} and \autoref{ass:rho} we have $$\sum_{j\geq 2}\log(1-\lambda_j/\lambda_1)-\sum_{j=2}^{a_n}\log(1-\hat\lambda_j/\hat\lambda_1)=o_P(1).$$
\end{lem}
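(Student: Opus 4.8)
The plan is to split the difference into a part that matches against the corresponding truncation of $\sum_{j\ge 2}\log(1-\lambda_j/\lambda_1)$ plus a deterministic tail:
\[
\sum_{j\ge2}\log(1-\lambda_j/\lambda_1)-\sum_{j=2}^{a_n}\log(1-\hat\lambda_j/\hat\lambda_1)
=\sum_{j=2}^{a_n}\bigl[\log(1-\lambda_j/\lambda_1)-\log(1-\hat\lambda_j/\hat\lambda_1)\bigr]
+\sum_{j>a_n}\log(1-\lambda_j/\lambda_1).
\]
Since $\{\lambda_j\}_{j\ge1}$ is summable, $\lambda_j/\lambda_1\to0$, hence $|\log(1-\lambda_j/\lambda_1)|\le 2\lambda_j/\lambda_1$ for all large $j$ and $\sum_{j\ge2}|\log(1-\lambda_j/\lambda_1)|<\infty$; because $a_n\ge\log n\to\infty$, the last sum is a deterministic tail that tends to $0$. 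The remaining task is to show that the comparison sum is $o_P(1)$.

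First I would record two uniform facts. By \autoref{ass:eigen} all eigenvalues are strictly positive and $\delta:=1-\lambda_2/\lambda_1>0$; by monotonicity, $1-\lambda_j/\lambda_1\ge\delta$ for every $j\ge2$. By \autoref{l:appl2}, $\eta_n:=\max_{j\ge1}|\hat\lambda_j-\lambda_j|=o_P(a_n^{-1})$, which in particular gives $\hat\lambda_1\convP\lambda_1$, so there are events $\Omega_n$ with $P(\Omega_n)\to1$ on which $\hat\lambda_1\ge\lambda_1/2$ and $\eta_n\le\delta\lambda_1/8$. On $\Omega_n$, using $\lambda_j\le\lambda_1$,
\[
\Bigl|\frac{\hat\lambda_j}{\hat\lambda_1}-\frac{\lambda_j}{\lambda_1}\Bigr|
=\frac{\bigl|(\hat\lambda_j-\lambda_j)\lambda_1-\lambda_j(\hat\lambda_1-\lambda_1)\bigr|}{\hat\lambda_1\lambda_1}
\le\frac{2\eta_n}{\hat\lambda_1}\le\frac{4\eta_n}{\lambda_1}\le\frac\delta2
\]
uniformly in $j\ge2$, and consequently $1-\hat\lambda_j/\hat\lambda_1\ge\delta-\delta/2=\delta/2>0$ for every $j\ge2$ simultaneously, so all logarithms above are well defined on $\Omega_n$. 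Since $u\mapsto\log u$ is $(2/\delta)$-Lipschitz on $[\delta/2,1]$ and both $1-\lambda_j/\lambda_1$ and $1-\hat\lambda_j/\hat\lambda_1$ lie in that interval,
\[
\bigl|\log(1-\lambda_j/\lambda_1)-\log(1-\hat\lambda_j/\hat\lambda_1)\bigr|
\le\frac2\delta\Bigl|\frac{\hat\lambda_j}{\hat\lambda_1}-\frac{\lambda_j}{\lambda_1}\Bigr|
\le\frac{8\eta_n}{\delta\lambda_1}
\]
on $\Omega_n$, uniformly in $j\ge2$.

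Summing this last bound over the $a_n-1$ indices $j=2,\dots,a_n$ gives a quantity that on $\Omega_n$ is at most $(a_n-1)\cdot 8\eta_n/(\delta\lambda_1)$, which is $o_P(1)$ because $a_n\eta_n=o_P(1)$; together with the vanishing deterministic tail this completes the proof. The one delicate point is the uniform-in-$j$ lower bound $1-\hat\lambda_j/\hat\lambda_1\ge\delta/2$: this is where \autoref{ass:eigen} genuinely enters, supplying the fixed spectral gap $\delta=1-\lambda_2/\lambda_1>0$, and it must be combined with the \emph{uniform} rate $\max_j|\hat\lambda_j-\lambda_j|=o_P(a_n^{-1})$ from \autoref{l:appl2} so that a single high-probability event controls every index at once; everything else is a Lipschitz estimate together with the bookkeeping that one is adding $a_n$ terms, each of order $o_P(a_n^{-1})$.
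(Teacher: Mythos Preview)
Your proof is correct and follows essentially the same approach as the paper's: split off the deterministic tail $\sum_{j>a_n}\log(1-\lambda_j/\lambda_1)$, then control each of the remaining $a_n-1$ terms by a mean-value/Lipschitz bound for $\log$ combined with the uniform rate $\max_j|\hat\lambda_j-\lambda_j|=o_P(a_n^{-1})$ from \autoref{l:appl2}. The only cosmetic difference is that the paper phrases the Lipschitz step via the mean-value theorem and bounds the derivative factor by $\max\{\lambda_1/(\lambda_1-\lambda_2),\hat\lambda_1/(\hat\lambda_1-\hat\lambda_2)\}=O_P(1)$, whereas you establish the same control by working on an explicit high-probability event where $1-\hat\lambda_j/\hat\lambda_1\ge\delta/2$; the substance is identical.
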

\begin{proof}
Since the series $\sum_{j\geq 2}\log(1-\lambda_j/\lambda_1)$ is convergent and $a_n\to\infty$, it suffices to show that
$$\sum_{j\geq 2}^{a_n}\bigl(\log(1-\lambda_j/\lambda_1)-\log(1-\hat\lambda_j/\hat\lambda_1)\bigr)=o_P(1).$$
To this end we note that by the mean value theorem and the monotonicity of $\log'x=x^{-1}$ we have
\begin{align*}
&|\log(1-\lambda_j/\lambda_1)-\log(1-\hat\lambda_j/\hat\lambda_1)|\\
&\quad\leq |\hat\lambda_j/\hat\lambda_1-\lambda_j/\lambda_1|\times\max\Bigl\{\frac{\lambda_1}{\lambda_1-\lambda_j},\frac{\hat\lambda_1}{\hat\lambda_1-\hat\lambda_j}\Bigr\}\\
&\quad\leq |\hat\lambda_j/\hat\lambda_1-\lambda_j/\lambda_1|\times\max\Bigl\{\frac{\lambda_1}{\lambda_1-\lambda_2},\frac{\hat\lambda_1}{\hat\lambda_1-\hat\lambda_2}\Bigr\}.
\end{align*}
By \autoref{l:appl2} we have $\max\{\frac{\lambda_1}{\lambda_1-\lambda_2},\frac{\hat\lambda_1}{\hat\lambda_1-\hat\lambda_2}\}=O_P(1)$ and
\begin{align*}
\max_{j\geq 2}|\hat\lambda_j/\hat\lambda_1-\lambda_j/\lambda_1|&\leq \max_{j\geq 2}|\hat\lambda_j/\hat\lambda_1-\lambda_j/\hat\lambda_1|+\max_{j\geq 2}|\lambda_j/\hat\lambda_1-\lambda_j/\lambda_1|\\
&\leq \frac{1}{\hat\lambda_1}\max_{j\geq 2}(|\hat\lambda_j-\lambda_j|+|\hat\lambda_1-\lambda_1|)\\
&\leq \frac{2}{\hat\lambda_1}\max_{j\geq 1}|\lambda_j-\hat\lambda_j|=o_P(a_n^{-1}).
\end{align*}

\end{proof}

\begin{proof}[Proof of \autoref{th:applconsist}]
Define $N=\lfloor n/d\rfloor$, $r=n-dN\in \{0,\ldots, d-1\}$ and set $\hat\omega=2\pi N/n$. Clearly, when $r=0$, then $\hat\omega=2\pi/d$ and then $\mathcal{S}_n(\hat\omega)=\sqrt{\frac{N}{d}}\sum_{t=1}^d s(t)e^{-\ii \frac{2\pi t}{d}}$. Let us elaborate the term when $r\neq 0$.
$$
\mathcal{S}_n(\hat\omega):=\frac{1}{\sqrt{n}}\sum_{t=1}^n s(t) e^{-\ii \hat\omega t}.
$$
By the $d$-periodicity of $s(t)$ and letting $R_n=\frac{1}{\sqrt{n}}\sum_{t=n-r+1}^n s(t)e^{-\ii \hat\omega t}$ we get
\begin{align*}
\mathcal{S}_n(\hat\omega)&=\frac{1}{\sqrt{n}}\sum_{t=1}^d s(t)\sum_{m=0}^{N-1} e^{-\ii\frac{2\pi(t+md)N}{n}}+R_n\\
&=\sum_{t=1}^d s(t) e^{-\ii \hat\omega t}\times \frac{1}{\sqrt{n}}\sum_{m=0}^{N-1} e^{-\ii\frac{2\pi m(n-r)}{n}}+R_n\\
&=\sum_{t=1}^d s(t) e^{-\ii \hat\omega t}\times \frac{1}{\sqrt{n}}\sum_{m=0}^{N-1} e^{\ii\frac{2\pi rm}{n}}+R_n.
\end{align*}
Now using the formula $\sum_{m=0}^{N-1} e^{\ii mx}=\frac{\sin(Nx/2)}{\sin(x/2)}e^{\ii x\frac{N-1}{2}}$ we have with  $x=2\pi r/n$
$$
\Bigl|\sum_{m=0}^{N-1} e^{\ii\frac{2\pi rm}{n}}\Bigr|=\Bigl|\frac{\sin(\frac{N\pi r}{n})}{\sin(\frac{\pi r}{n})}\Bigr|\geq \frac{\sin(\pi\min\{\frac{d-r}{d},\frac{Nr}{n}\})}{\sin(\frac{\pi r}{n})}.
$$
For the last inequality we use that $\pi\min\{\frac{d-r}{d},\frac{Nr}{n}\}\in [0,\pi/2]$ and that $\sin(x)$ is increasing in this interval. Recall, moreover, that $x/2\leq \sin(x)\leq x$ for $x \in [0,\pi/2]$. Hence $d/n\to 0$ implies that for large enough $n$ 
$$
\frac{\sin(\pi\min\{\frac{d-r}{d},\frac{Nr}{n}\})}{\sin(\frac{\pi r}{n})}\geq \frac{n}{2 r}\min\Bigl\{\frac{d-r}{d},\frac{Nr}{n}\Bigr\}\geq \frac{N}{2d}.
$$

Because of $\|R_n\|=O(n^{-1/2})$ we can conclude that for  $n\geq n_0$ we have
$$
\|\mathcal{S}_n(\hat\omega)\|\geq \frac{\sqrt{N}}{3d^{3/2}}\Bigl\|\sum_{t=1}^d s(t) e^{-\ii \hat\omega t}\Bigr\|.
$$
We note, moreover, that
$$
\Bigl\|\sum_{t=1}^d s(t) \Bigl(e^{-\ii \hat\omega t}-e^{-\ii \frac{2\pi}{d} t}\Bigr)\Bigr\|\leq \sum_{t=1}^d \|s(t)\|\Bigl|\hat\omega-\frac{2\pi}{d}\Bigr|t\leq \sum_{t=1}^d \|s(t)\|\Bigl|\frac{2\pi r}{n}\Bigr|=O(d/N).
$$
Because of \eqref{e:applcons} we may hence conclude that for a large enough $n$
$$
\|\mathcal{S}_n(\hat\omega)\|\geq \frac{\sqrt{n}}{4d^{2}}\Bigl\|\sum_{t=1}^d s(t) e^{-\ii \frac{2\pi t}{d}}\Bigr\|,
$$
and thus
\begin{align*}
&\max_{1\leq j\leq q}\|(I-e^{-\ii\omega_j}\widehat\rho\,)\mathcal Y_n(\omega_j)\|=\max_{1\leq j\leq q}\|(I-e^{-\ii\omega_j}\widehat\rho\,)\mathcal (S_n(\omega_j)+\mathcal X_n(\omega_j))\|\\
&\quad\geq \|(I-e^{-\ii\hat\omega}\widehat\rho\,)\mathcal S_n(\hat\omega)\|-
\max_{1\leq j\leq q}\|(I-e^{-\ii\omega_j}\widehat\rho\,)\mathcal X_n(\omega_j)\|\\
&\quad\geq (1-\|\widehat\rho\,\|)\frac{\psi_n\sqrt{\log n}}{4}- \max_{1\leq j\leq q}\|(I-e^{-\ii\omega_j}\widehat\rho\,)\mathcal X_n(\omega_j)\|.
\end{align*}
Then we have
\begin{align*}
\max_{1\leq j\leq q}\|(I-e^{-\ii\omega_j}\widehat\rho\,)\mathcal X_n(\omega_j)\|&=\max_{1\leq j\leq q}\|(I-e^{-\ii\omega_j}\widehat\rho\,)(I-e^{-\ii\omega_j}\rho)^{-1}(I-e^{-\ii\omega_j}\rho)\mathcal X_n(\omega_j)\|\\
&\leq \frac{1+\|\widehat\rho\,\|}{1-\|\rho\|} \max_{1\leq j\leq q}\|\mathcal X_n(\omega_j)\|=O_P(\sqrt{\log n}).
\end{align*}
This last bound is obtained by $\|\widehat\rho-\rho^\prime\|\convP 0$, \autoref{lem:lpfilter} and \autoref{fancythm}.
So by \eqref{e:applcons} it follows that $\max_{1\leq j\leq q}\|(I-e^{-\ii\omega_j}\widehat\rho\,)\mathcal X_n(\omega_j)\|=o_P(\psi_n\sqrt{\log n})$. This shows that $\max_{1\leq j\leq q}\|(I-e^{-\ii\omega_j}\widehat\rho\,)\mathcal Y_n(\omega_j)\|^2$ diverges at least at rate $\psi_n^2\log n$. Since the estimated eigenvalues in the definition $T_n$ converge by assumption, the statistic $T_n$ must diverge.
\end{proof}

\subsection{Auxiliary lemmas}\label{sec:aux}
\subsubsection*{Auxiliary lemmas for standardising sequences}
\begin{lem} \label{munp}
Let $d=d_n\to\infty.$ Suppose $\lambda_1>\lambda_2$. Then we have that $b_n-b_n^d\to 0$ as $n\to\infty$, where $b_n=\lambda_1\log(q\prod_{j=2}^\infty(1-\lambda_j/\lambda_1)^{-1})$.
\end{lem}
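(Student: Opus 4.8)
The plan is to reduce the statement to the convergence of a single numerical series and then invoke summability of the eigenvalues. First I would rewrite the two centring constants. By \eqref{defseq},
\[
b_n^d = \lambda_1\log n - \lambda_1\sum_{j=2}^{d}\log\!\bigl(1-\lambda_j/\lambda_1\bigr),
\qquad
b_n = \lambda_1\log n - \lambda_1\sum_{j=2}^{\infty}\log\!\bigl(1-\lambda_j/\lambda_1\bigr),
\]
so that the leading logarithmic term cancels and
\[
b_n - b_n^d = -\lambda_1\sum_{j=d_n+1}^{\infty}\log\!\bigl(1-\lambda_j/\lambda_1\bigr).
\]
Hence it suffices to show that the series $\sum_{j\ge2}\log(1-\lambda_j/\lambda_1)$ converges; its tail then tends to $0$ as $d_n\to\infty$. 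Incidentally, this also shows that $b_n$ — equivalently, the infinite product appearing in its definition — is well defined.

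Next I would check that each summand is finite and estimate its modulus. Since the eigenvalues are arranged in non-increasing order and $\lambda_1>\lambda_2$, for every $j\ge2$ we have $0\le\lambda_j/\lambda_1\le\lambda_2/\lambda_1=:\kappa<1$, so $\log(1-\lambda_j/\lambda_1)$ is a finite number in $(-\infty,0]$. On the compact interval $[0,\kappa]$ the map $x\mapsto-\log(1-x)$ is continuous and vanishes at the origin, hence $-\log(1-x)\le C_\kappa\,x$ for a constant $C_\kappa$ depending only on $\kappa$. Consequently
\[
\sum_{j\ge2}\bigl|\log(1-\lambda_j/\lambda_1)\bigr|
\le C_\kappa\,\lambda_1^{-1}\sum_{j\ge2}\lambda_j
\le C_\kappa\,\lambda_1^{-1}\,E\|X_1\|^2<\infty,
\]
because the eigenvalues of $\operatorname{Var}(X_1)=E[X_1\otimes X_1]$ are summable with sum $E\|X_1\|^2$ under the standing moment assumption $E\|X_1\|^2<\infty$. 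The series thus converges absolutely, its tail $\sum_{j>d_n}\log(1-\lambda_j/\lambda_1)$ tends to $0$ as $n\to\infty$, and multiplying by $-\lambda_1$ gives $b_n-b_n^d\to0$.

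I do not anticipate any genuine difficulty here. The only two points requiring a moment's attention are keeping the arguments of the logarithm bounded away from $1$ — which is precisely what $\lambda_1>\lambda_2$ together with monotonicity of $\{\lambda_k\}_{k\ge1}$ guarantees — and recalling the elementary identity $\sum_k\lambda_k=E\|X_1\|^2<\infty$. Everything else is just the observation that $b_n-b_n^d$ is, up to the factor $-\lambda_1$, a tail of a convergent series.
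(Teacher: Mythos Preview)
Your proof is correct and follows essentially the same route as the paper: cancel the leading logarithm, recognise $b_n-b_n^d$ as $-\lambda_1$ times the tail $\sum_{j>d_n}\log(1-\lambda_j/\lambda_1)$, bound each summand linearly in $\lambda_j$ using $\lambda_1>\lambda_2$, and conclude from $\sum_j\lambda_j<\infty$. The only cosmetic difference is that the paper writes the summands as $\log(1+\lambda_j/(\lambda_1-\lambda_j))$ and applies $\log(1+x)\le x$ directly, whereas you invoke a Lipschitz bound on $[0,\lambda_2/\lambda_1]$; these are equivalent.
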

\begin{proof}
We have that
\[
	b_n-b_n^d=
	\lambda_1\sum_{j=d+1}^\infty\log(1+\lambda_j/(\lambda_1-\lambda_j))
\]
and for any $j>1$
\[
	\log(1+\lambda_j/(\lambda_1-\lambda_j))
	\le\lambda_j/(\lambda_1-\lambda_j)
	\le\lambda_j/(\lambda_1-\lambda_2).
\]
The claim follows from $\sum_{j=1}^\infty\lambda_j<\infty$.
\end{proof}

\subsubsection*{Auxiliary lemmas for truncation}
\begin{lem}\label{lemma:trunc}
Suppose that $d\ge1$ is fixed and $\operatorname E\|X_t\|^r<\infty$ with $r>2$. Then
\[
	M_n-\tilde M_n:=\max_{1\leq k\leq q}\|\mathcal{X}_n^d(\omega_k)\|^2-\max_{1\leq k\leq q}\|\tilde{\mathcal{X}}_n^d(\omega_k)\|^2=o_P(1)\quad\text{as}\quad n\to\infty.
\]
\end{lem}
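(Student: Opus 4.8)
The plan is a truncation argument built on the maximal inequality \autoref{thm:Ologn} for discrete Fourier transforms of independent summands. First I would linearise the difference of the two maxima: since $|\max_k a_k-\max_k b_k|\le\max_k|a_k-b_k|$ and, for any $x,y$ in a Hilbert space, $|\|x\|^2-\|y\|^2|\le\|x-y\|(\|x\|+\|y\|)$, one gets
\begin{equation*}
 |M_n-\tilde M_n|\le\Bigl(\max_{1\le k\le q}\bigl\|\mathcal X_n^d(\omega_k)-\tilde{\mathcal X}_n^d(\omega_k)\bigr\|\Bigr)\Bigl(\max_{1\le k\le q}\|\mathcal X_n^d(\omega_k)\|+\max_{1\le k\le q}\|\tilde{\mathcal X}_n^d(\omega_k)\|\Bigr),
\end{equation*}
so it suffices to show that the first factor is $o_P(1)$ at a polynomial rate, while the second factor is $O_P(\log^{1/2}n)$.

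Since $\mathcal X_n^d(\omega)=P_d\bigl(n^{-1/2}\sum_{t=1}^nX_te^{-it\omega}\bigr)$ with $P_d$ the orthogonal projection onto $\operatorname{span}\{v_1,\dots,v_d\}$, the norm of $\mathcal X_n^d(\omega)$ never exceeds the norm of the full DFT, so \autoref{thm:Ologn} gives $E\max_{1\le k\le q}\|\mathcal X_n^d(\omega_k)\|^2\le C\,E\|X_1\|^2\log n$ (note $E\|X_1\|^2<\infty$ follows from $E\|X_1\|^r<\infty$), hence $\max_k\|\mathcal X_n^d(\omega_k)\|=O_P(\log^{1/2}n)$. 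For the first factor I would write $X_t-\tilde X_t=X_tI_{\{\|X_t\|>n^{1/r}\}}+E[X_1I_{\{\|X_1\|\le n^{1/r}\}}]$; the deterministic term drops out because $\sum_{t=1}^ne^{-it\omega_k}=0$ at every Fourier frequency $\omega_k$, so
\begin{equation*}
 \mathcal X_n^d(\omega_k)-\tilde{\mathcal X}_n^d(\omega_k)=P_d\Bigl(n^{-1/2}\sum_{t=1}^nY_te^{-it\omega_k}\Bigr),\qquad Y_t:=X_tI_{\{\|X_t\|>n^{1/r}\}}-E\bigl[X_1I_{\{\|X_1\|>n^{1/r}\}}\bigr],
\end{equation*}
where for each fixed $n$ the $Y_t$ are iid and centred. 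A second application of \autoref{thm:Ologn} gives $E\max_{1\le k\le q}\|\mathcal X_n^d(\omega_k)-\tilde{\mathcal X}_n^d(\omega_k)\|^2\le C\,E\|Y_1\|^2\log n$, and the moment hypothesis supplies the rate $E\|Y_1\|^2\le E[\|X_1\|^2I_{\{\|X_1\|>n^{1/r}\}}]\le n^{-(r-2)/r}E\|X_1\|^r$, so $\max_k\|\mathcal X_n^d(\omega_k)-\tilde{\mathcal X}_n^d(\omega_k)\|=O_P(n^{-(r-2)/(2r)}\log^{1/2}n)=o_P(1)$. By the triangle inequality this also yields $\max_k\|\tilde{\mathcal X}_n^d(\omega_k)\|=O_P(\log^{1/2}n)$, and substituting into the first display gives $|M_n-\tilde M_n|=O_P(n^{-(r-2)/(2r)}\log n)=o_P(1)$ since $r>2$.

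The only delicate point is the balance of rates: each maximum over the $q\asymp n$ Fourier frequencies costs a factor $\log n$, so the decay of the truncated tail has to beat a power of $\log n$. This is precisely where the full strength of $E\|X_1\|^r<\infty$ with $r>2$ is used---$E\|X_1\|^2<\infty$ alone would give only $E[\|X_1\|^2I_{\{\|X_1\|>n^{1/r}\}}]=o(1)$ without a rate---together with the elementary fact that $n^{-c}\log n\to0$ for every $c>0$. Everything else is the routine bookkeeping of a truncation argument.
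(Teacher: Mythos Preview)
Your argument is more elaborate than needed and has a gap in the key step. When you write ``a second application of \autoref{thm:Ologn} gives $E\max_k\|\mathcal X_n^d(\omega_k)-\tilde{\mathcal X}_n^d(\omega_k)\|^2\le C\,E\|Y_1\|^2\log n$'', this is not what the theorem delivers. The proof of \autoref{thm:Ologn} normalises to $E\|X_1\|^r=1$ and produces an absolute constant times $\log n$; by scaling, the bound for general innovations is $C_r\,(E\|Y_1\|^r)^{2/r}\log n$, not $C\,E\|Y_1\|^2\log n$. For the truncated remainder $Y_t$ you only know $E\|Y_1\|^r\le 2^rE[\|X_1\|^rI_{\{\|X_1\|>n^{1/r}\}}]=o(1)$ \emph{without a rate}, so $(E\|Y_1\|^r)^{2/r}\log n$ need not tend to zero and the product with the $O_P(\log^{1/2}n)$ factor does not close. (The crude bound $E\max_k\|\cdot\|^2\le q\,E\|Y_1\|^2$ is even worse, giving $O(n^{2/r})$.)

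The paper's proof bypasses all of this in one line, and in fact you have already written down the crucial observation: the deterministic centering $E[X_1I_{\{\|X_1\|\le n^{1/r}\}}]$ cancels at every Fourier frequency. Hence $\mathcal X_n^d(\omega_k)-\tilde{\mathcal X}_n^d(\omega_k)=P_d\bigl(n^{-1/2}\sum_t X_tI_{\{\|X_t\|>n^{1/r}\}}e^{-it\omega_k}\bigr)$ is \emph{identically zero} on the event $\bigcap_{t=1}^n\{\|X_t\|\le n^{1/r}\}$. Therefore
\[
P(|M_n-\tilde M_n|>\varepsilon)\le P(M_n\ne\tilde M_n)\le nP(\|X_1\|>n^{1/r})\le E\bigl[\|X_1\|^rI_{\{\|X_1\|^r>n\}}\bigr]\to0,
\]
using $E\|X_1\|^r<\infty$. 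No maximal inequality, no balancing of logarithms against polynomial rates---the two statistics literally coincide with probability tending to one.
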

\begin{proof}
We have that $\cap_{t=1}^n\{X_t=\tilde X_t\}\subset\{M_n=\tilde M_n\}$. Hence,
\begin{align*}
	P(|M_n-\tilde M_n|>\varepsilon)
	&\le P(M_n\ne\tilde M_n)\\
	&\le P(\cup_{t=1}^n\{X_t\ne\tilde X_t\})\\
	&\le nP(\|X_1\|>n^{1/r})\to0	
\end{align*}
as $n\to\infty$ for each $\varepsilon>0$ since $X_t$'s have the same distribution and $\operatorname E\|X_1\|^r<\infty$. The proof is complete.
\end{proof}

\begin{lem}\label{lemma:convofcovop}
Suppose that  $\operatorname E\|X_1\|^r<\infty$ with some $r\ge2$. Then
\[
	\|\operatorname{Var}(X_1)-\operatorname{Var}(\tilde X_1)\|
	=o(n^{-(1-2/r)})
	\quad\text{as}\quad n\to\infty.
\]
\end{lem}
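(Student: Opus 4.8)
The plan is to use the fact that the truncated and recentred variable $\tilde X_1$ already has mean zero, so that its covariance operator is a plain second moment, and then to split $\operatorname{Var}(X_1)-\operatorname{Var}(\tilde X_1)$ into a truncation tail and a centring correction, the latter being of strictly smaller order. Recall from the proof of \autoref{mainthm0} that $\tilde X_1 = Y - m$ with $Y = X_1 I_{\{\|X_1\|\le n^{1/r}\}}$ and $m = \operatorname E Y$; since $\operatorname E X_1 = 0$ we have $m = -\operatorname E[X_1 I_{\{\|X_1\|>n^{1/r}\}}]$ and also $\operatorname E\tilde X_1 = 0$, hence $\operatorname{Var}(\tilde X_1) = \operatorname E[\tilde X_1\otimes\tilde X_1] = \operatorname E[Y\otimes Y] - m\otimes m$. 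Using $\operatorname{Var}(X_1) = \operatorname E[X_1\otimes X_1]$ and $Y\otimes Y = (X_1\otimes X_1)I_{\{\|X_1\|\le n^{1/r}\}}$ we get
\begin{equation*}
\operatorname{Var}(X_1)-\operatorname{Var}(\tilde X_1) = \operatorname E[(X_1\otimes X_1)I_{\{\|X_1\|> n^{1/r}\}}] + m\otimes m.
\end{equation*}
Taking operator norms, applying the triangle inequality for Bochner integrals together with $\|x\otimes y\| = \|x\|\|y\|$, it suffices to bound $\operatorname E[\|X_1\|^2 I_{\{\|X_1\|>n^{1/r}\}}]$ and $\|m\|^2$.

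For the first term, on $\{\|X_1\|>n^{1/r}\}$ we have $\|X_1\|^2 = \|X_1\|^r\|X_1\|^{-(r-2)}\le n^{-(r-2)/r}\|X_1\|^r$ (using $r\ge2$), whence
\begin{equation*}
\operatorname E[\|X_1\|^2 I_{\{\|X_1\|>n^{1/r}\}}] \le n^{-(1-2/r)}\operatorname E[\|X_1\|^r I_{\{\|X_1\|>n^{1/r}\}}],
\end{equation*}
and the expectation on the right tends to $0$ by dominated convergence since $\operatorname E\|X_1\|^r<\infty$; this yields the asserted $o(n^{-(1-2/r)})$ rate. The same device gives $\|m\|\le \operatorname E[\|X_1\|I_{\{\|X_1\|>n^{1/r}\}}]\le n^{-(r-1)/r}\operatorname E[\|X_1\|^r I_{\{\|X_1\|>n^{1/r}\}}]$, so $\|m\otimes m\| = \|m\|^2 = o(n^{-2(r-1)/r})$, which is negligible relative to $n^{-(1-2/r)} = n^{-(r-2)/r}$ because $2(r-1)>r-2$. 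Adding the two estimates finishes the argument.

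There is no genuine obstacle here; the only point needing a little care is the edge case $r=2$, where the target is merely $o(1)$: then $n^{-(r-2)/r}=1$, and one simply uses $\operatorname E[\|X_1\|^2 I_{\{\|X_1\|>n^{1/2}\}}]\to0$ together with $\operatorname E[\|X_1\| I_{\{\|X_1\|>n^{1/2}\}}]\to0$ (the latter being legitimate since $\operatorname E\|X_1\|\le(\operatorname E\|X_1\|^2)^{1/2}<\infty$).
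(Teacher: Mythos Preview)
Your proof is correct and follows essentially the same route as the paper: you derive the identical decomposition $\operatorname{Var}(X_1)-\operatorname{Var}(\tilde X_1)=\operatorname E[(X_1\otimes X_1)I_{\{\|X_1\|>n^{1/r}\}}]+m\otimes m$ and then bound $\operatorname E[\|X_1\|^2 I_{\{\|X_1\|>n^{1/r}\}}]$. The only cosmetic differences are that the paper absorbs the centring term $m\otimes m$ into a factor~$2$ via $\|m\|^2\le\operatorname E[\|X_1\|^2 I_{\{\|X_1\|>n^{1/r}\}}]$ (Jensen), whereas you show it is of strictly smaller order, and the paper invokes H\"older where you use the equivalent pointwise bound $\|X_1\|^2\le n^{-(1-2/r)}\|X_1\|^r$ on the tail set.
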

\begin{proof}
We have that
\[
\operatorname{Var}(\tilde X_1)
	=\operatorname E[I_{\{\|X_1\|\le n^{1/r}\}}(X_1\otimes X_1)]
	-\operatorname E[X_1I_{\{\|X_1\|\le n^{1/r}\}}]\otimes\operatorname E[X_1I_{\{\|X_1\|\le n^{1/r}\}}]
\]
and
\[
	\operatorname E[X_1I_{\{\|X_1\|\le n^{1/r}\}}]=-\operatorname E[X_1I_{\{\|X_1\|>n^{1/r}\}}]
\]
since $\operatorname EX_1=0$. Hence,
\begin{align*}
	&\|\operatorname{Var}(X_1)-\operatorname{Var}(\tilde X_1)\|=\\
	&=\|\operatorname E[(X_1\otimes X_1)I_{\{\|X_1\|>n^{1/r}\}}]+
	\operatorname E[X_1I_{\{\|X_1\|>n^{1/r}\}}]\otimes\operatorname E[X_1I_{\{\|X_1\|>n^{1/r}\}}]\|\\
	&\le2\operatorname E[\|X_1\|^2I_{\{\|X_1\|>n^{1/r}\}}]\\
	&\le2(\operatorname E[\|X_1\|^rI_{\{\|X_1\|>n^{1/r}\}}])^{2/r}\cdot n^{-(1-2/r)}.
\end{align*}
In the last step we used the H\"older inequality. The proof is complete.
\end{proof}

\begin{lem}\label{lem:convrateeigen}
Suppose that \autoref{ass:eigen} holds. Denote the eigenvectors of $\operatorname{Var}(\tilde X_1)$ by $\tilde v_1,\tilde v_2,\ldots$ with the corresponding eigenvalues $\tilde\lambda_1,\tilde\lambda_2,\ldots$ and $c_k=\operatorname{sgn}\langle v_k,\tilde v_k\rangle$ for $k\ge1$. Then
\[
	\|\tilde v_k-c_kv_k\|
	=o(n^{-(1-2/r)})
	\quad\text{as}\quad n\to\infty
\]
for each $k\ge1$.
\end{lem}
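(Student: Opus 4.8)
The plan is to combine a classical eigenvector perturbation inequality with the operator-norm estimate already established in \autoref{lemma:convofcovop}. Write $\Delta_n=\operatorname{Var}(\tilde X_1)-\operatorname{Var}(X_1)$, a self-adjoint trace-class operator, and recall from \autoref{lemma:convofcovop} that $\|\Delta_n\|=o(n^{-(1-2/r)})$ as $n\to\infty$. Fix $k\ge1$ and let $\delta_k$ denote the spectral gap of $\operatorname{Var}(X_1)$ at level $k$, i.e.\ $\delta_1=\lambda_1-\lambda_2$ and $\delta_k=\min\{\lambda_{k-1}-\lambda_k,\lambda_k-\lambda_{k+1}\}$ for $k\ge2$. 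By \autoref{ass:eigen} we have $\delta_k>0$.

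First I would record that for all $n$ large enough the $k$-th eigenvalue of $\operatorname{Var}(\tilde X_1)$ is simple: by Weyl's inequality $|\tilde\lambda_j-\lambda_j|\le\|\Delta_n\|$ for every $j$, so once $\|\Delta_n\|<\delta_k/3$ the eigenvalue $\tilde\lambda_k$ is isolated from the rest of the spectrum of $\operatorname{Var}(\tilde X_1)$, hence simple, and the associated eigenvector $\tilde v_k$ is well defined up to sign. The sign is pinned down by the choice $c_k=\operatorname{sgn}\langle v_k,\tilde v_k\rangle$. Then the standard perturbation bound for eigenvectors of compact self-adjoint operators (for instance Lemma~4.3 of \citet{bosq:2000}, or equivalently a Davis--Kahan $\sin\theta$ estimate) gives, for all such $n$,
\[
	\|\tilde v_k-c_kv_k\|\le \frac{C_k}{\delta_k}\,\|\Delta_n\|,
\]
where $C_k$ is an absolute constant (one may take $C_k=2\sqrt{2}$).

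Since $k$ is held fixed, $\delta_k$ is a positive constant not depending on $n$, and therefore $\|\tilde v_k-c_kv_k\|=O(\|\Delta_n\|)=o(n^{-(1-2/r)})$ as $n\to\infty$, which is exactly the assertion. The argument is essentially a one-line substitution once the perturbation inequality is invoked; the only mildly delicate point is the preliminary verification that $\tilde\lambda_k$ is simple for large $n$ so that the inequality applies, and this is immediate from Weyl's inequality together with \autoref{ass:eigen}. Thus I do not anticipate a genuine obstacle here, beyond citing the appropriate perturbation result.
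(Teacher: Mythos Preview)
Your argument is correct and is essentially the same as the paper's: the paper invokes Lemma~2.3 of \citet{horvath2012} to obtain the bound $\|\tilde v_k-c_kv_k\|\le(2\sqrt2/\alpha_k)\|\operatorname{Var}(X_1)-\operatorname{Var}(\tilde X_1)\|$ with $\alpha_k$ equal to your $\delta_k$, and then applies \autoref{lemma:convofcovop}. Your extra care in checking via Weyl's inequality that $\tilde\lambda_k$ is simple for large $n$ is a nice touch, but otherwise the proofs coincide.
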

\begin{proof}
Using Lemma~2.3 of \citet{horvath2012},
\[
	\|\tilde v_k-c_kv_k\|
	\le\frac{2\sqrt 2}{\alpha_k}\|\operatorname{Var}(X_1)-\operatorname{Var}(\tilde X_1)\|,
\]
where $\alpha_1=\lambda_1-\lambda_2$ and $\alpha_k=\min\{\lambda_{k-1}-\lambda_k,\lambda_k-\lambda_{k+1}\}$ for $k>1$. We use \autoref{lemma:convofcovop} to conclude the proof.
\end{proof}

\begin{lem}\label{lem:higherordermoment}
Suppose that $\operatorname EX_1=0$ and $\operatorname E\|X_1\|^r<\infty$ with some $r\ge2$. Then for any $v>r$ we have
\[
	\operatorname E\|\tilde X_1\|^v
	=O(n^{v/r-1})\quad\text{as $n\to\infty$}
\]
\end{lem}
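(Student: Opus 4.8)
The plan is to split $\tilde X_1$ into its truncated part and the (nonrandom) centering term, bound the $v$-th moment of each separately, and observe that the centering term is of strictly smaller order. Recall that
\[
	\tilde X_1 = X_1 I_{\{\|X_1\|\le n^{1/r}\}} - \operatorname E[X_1 I_{\{\|X_1\|\le n^{1/r}\}}],
\]
so by the $c_v$-inequality $(a+b)^v\le 2^{v-1}(a^v+b^v)$ it suffices to estimate $\operatorname E[\|X_1\|^v I_{\{\|X_1\|\le n^{1/r}\}}]$ and $\|\operatorname E[X_1 I_{\{\|X_1\|\le n^{1/r}\}}]\|^v$.

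For the truncated part I would use that on the event $\{\|X_1\|\le n^{1/r}\}$ one has $\|X_1\|^{v-r}\le n^{(v-r)/r}$ (this is where $v>r$ enters), hence
\[
	\operatorname E[\|X_1\|^v I_{\{\|X_1\|\le n^{1/r}\}}]
	\le n^{(v-r)/r}\operatorname E\|X_1\|^r = O(n^{v/r-1}),
\]
using $\operatorname E\|X_1\|^r<\infty$.

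For the centering term I would first rewrite it via $\operatorname EX_1=0$ as $\operatorname E[X_1 I_{\{\|X_1\|\le n^{1/r}\}}]=-\operatorname E[X_1 I_{\{\|X_1\|> n^{1/r}\}}]$, so that its norm is at most $\operatorname E[\|X_1\| I_{\{\|X_1\|>n^{1/r}\}}]$. On $\{\|X_1\|>n^{1/r}\}$, since $r\ge2>1$ we have $\|X_1\|^{1-r}\le n^{(1-r)/r}$, giving
\[
	\operatorname E[\|X_1\| I_{\{\|X_1\|>n^{1/r}\}}]\le n^{(1-r)/r}\operatorname E\|X_1\|^r=O(n^{1/r-1}).
\]
Raising this to the $v$-th power yields $O(n^{v/r-v})$, and combining the two estimates gives the claimed bound $\operatorname E\|\tilde X_1\|^v=O(n^{v/r-1})$.

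There is no substantial obstacle; the only point requiring a moment's care is verifying that the centering contribution is genuinely negligible relative to the truncated contribution, i.e.\ that $v/r-v\le v/r-1$, which holds precisely because $v>r\ge2>1$. (I note that the same splitting argument with $r$ in place of $v$, together with \autoref{lemma:convofcovop}, controls the low-order moments of $\tilde X_1$ used elsewhere in the proofs.)
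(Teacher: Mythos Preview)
Your proof is correct and follows essentially the same route as the paper: both split via the $c_v$-inequality and control the truncated moment by writing $\|X_1\|^v=\|X_1\|^r\|X_1\|^{v-r}$ on the event $\{\|X_1\|\le n^{1/r}\}$. The only difference is in the centering term: the paper simply bounds $(\operatorname E[\|X_1\|I_{\{\|X_1\|\le n^{1/r}\}}])^v\le\operatorname E[\|X_1\|^vI_{\{\|X_1\|\le n^{1/r}\}}]$ by Jensen, obtaining the same $O(n^{v/r-1})$ rate for both pieces, whereas you exploit $\operatorname EX_1=0$ to get the sharper $O(n^{v/r-v})$ for the centering; your extra precision is not needed here, but it does no harm.
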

\begin{proof}
We have that
\begin{align*}
	\operatorname E\|\tilde X_1\|^v
	&\le\operatorname E\|X_1 I_{\{\|X_1\|\le n^{1/r}\}}-\operatorname E[X_1I_{\{\|X_1\|\le n^{1/r}\}}]\|^v\\
	&\le\operatorname E(\|X_1\| I_{\{\|X_1\|\le n^{1/r}\}}+\operatorname E[\|X_1\| I_{\{\|X_1\|\le n^{1/r}\}}])^v\\
	&\le2^v\operatorname E[\|X_1\|^v I_{\{\|X_1\|\le n^{1/r}\}}]\\
	&=2^v\operatorname E[\|X_1\|^r\|X_1\|^{v-r} I_{\{\|X_1\|\le n^{1/r}\}}]\\
	&=2^v\operatorname E \|X_1\|^r\cdot n^{v/r-1}.
\end{align*}
The proof is complete.
\end{proof}

\subsubsection*{Auxiliary lemma for CLT}
\begin{lem} \label{lemM1} Set  $V_t :=\xi_t^{d}\otimes f_t$ (for vectors $\otimes$ denotes the Kronecker product) with $1\le t\le n$ where $\xi_t^d$ and $f_t$ are given by \eqref{eq:xi_t} and \eqref{eq:Wt2q} respectively. Then
\begin{align} \label{Mbounds}
\frac{\lambda_d}{2}  \;  \leq n^{-1}\sum_{t=1}^n E|u' V_{t}|^2 \, \leq \frac{\lambda_1}{2}
\end{align}
for all $u\in \mathbb{S}^{2dq-1}$.
\end{lem}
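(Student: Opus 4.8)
The plan is to unfold the Kronecker structure of $V_t=\xi_t^{d}\otimes f_t$ and then exploit two orthogonality facts: the orthonormality of the eigenbasis $\{v_k\}$ of $\operatorname{Var}(X_1)$, and the orthogonality of the trigonometric system at the Fourier frequencies.

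First I would write $u=(u_1',\dots,u_d')'$ with each block $u_\ell\in\mathbb R^{2q}$, so that, by definition of the Kronecker product and of $\xi_t^d$ in \eqref{eq:xi_t},
\[
	u'V_t=\sum_{\ell=1}^d\langle X_t,v_\ell\rangle\,(u_\ell'f_t).
\]
Since $\operatorname E X_1=0$ and $\{v_k\}$ are the eigenvectors of $\operatorname{Var}(X_1)$ with eigenvalues $\{\lambda_k\}$, we have $\operatorname E[\langle X_t,v_\ell\rangle\langle X_t,v_k\rangle]=\lambda_k\,\delta_{k\ell}$, whence
\[
	\operatorname E|u'V_t|^2=\sum_{\ell=1}^d\lambda_\ell\,(u_\ell'f_t)^2 .
\]
Averaging over $t$ and using the identity $n^{-1}\sum_{t=1}^n f_tf_t'=\tfrac12 I_{2q}$ (the same identity already invoked in the proof of \autoref{mainthm0}, which follows from $\sum_{t=1}^n\cos(t\omega_j)\cos(t\omega_k)=\sum_{t=1}^n\sin(t\omega_j)\sin(t\omega_k)=\tfrac n2\delta_{jk}$ and $\sum_{t=1}^n\cos(t\omega_j)\sin(t\omega_k)=0$ for $1\le j,k\le q$, since $\omega_j\pm\omega_k\not\equiv 0\pmod{2\pi}$ unless $j=k$ there), I obtain
\[
	n^{-1}\sum_{t=1}^n\operatorname E|u'V_t|^2
	=\sum_{\ell=1}^d\lambda_\ell\,u_\ell'\Bigl(n^{-1}\sum_{t=1}^n f_tf_t'\Bigr)u_\ell
	=\frac12\sum_{\ell=1}^d\lambda_\ell\|u_\ell\|^2 .
\]

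Finally, since $u\in\mathbb S^{2dq-1}$ we have $\sum_{\ell=1}^d\|u_\ell\|^2=\|u\|^2=1$, and since the eigenvalues are arranged in non-increasing order, $\lambda_d\le\lambda_\ell\le\lambda_1$ for $1\le\ell\le d$; multiplying through by $\|u_\ell\|^2\ge0$ and summing yields
\[
	\frac{\lambda_d}{2}=\frac{\lambda_d}{2}\sum_{\ell=1}^d\|u_\ell\|^2
	\le\frac12\sum_{\ell=1}^d\lambda_\ell\|u_\ell\|^2
	\le\frac{\lambda_1}{2}\sum_{\ell=1}^d\|u_\ell\|^2=\frac{\lambda_1}{2},
\]
which is exactly \eqref{Mbounds}. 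I do not anticipate any genuine obstacle here; the only step that warrants care is the verification of the trigonometric identity $n^{-1}\sum_{t}f_tf_t'=\tfrac12 I_{2q}$, which is classical and has already been used earlier in the paper.
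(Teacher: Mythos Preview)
Your proof is correct and follows essentially the same route as the paper's: decompose $u$ into $d$ blocks in $\mathbb R^{2q}$, use the diagonal covariance of the scores and the identity $n^{-1}\sum_t f_tf_t'=\tfrac12 I_{2q}$ to arrive at $\tfrac12\sum_\ell\lambda_\ell\|u_\ell\|^2$, then bound via $\sum_\ell\|u_\ell\|^2=1$. The only difference is cosmetic---the paper phrases the last step as ``maximized when $\|u_1\|=1$, minimized when $\|u_d\|=1$'' whereas you write out the convex-combination bound explicitly.
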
 

\begin{proof}[Proof of \autoref{lemM1}] Denote $u=(u_1',\dots,u_d')'\in\mathbb{R}^{2dq}$ with $u_k'\in\mathbb{R}^{2q}$ for $1\le k \le d$. Since
\[
	\operatorname E[\xi_t^{(d)}(\xi_t^{(d)})']
	=\operatorname{diag}(\lambda_1,\ldots,\lambda_d),
\]
we obtain
\begin{align*}
\frac{1}{n}\sum_{t=1}^n E|u' V_{t}|^2 &=  \frac{1}{n}\sum_{t=1}^n  \sum_{j,k=1}^d E[\langle X_t,v_j \rangle \langle X_t,v_k \rangle] u_j'f_t u_k'f_t     \\
&=  \sum_{j=1}^d \lambda_j u_j'\Bigl(\frac{1}{n}\sum_{t=1}^n f_t f_t' \Bigr)u_j.
 \end{align*}
But note that $n^{-1}\sum_{t=1}^n f_tf_t'=\frac{1}{2}I_{2q}$. Hence,
\begin{equation}\label{WW}
	\frac{1}{n}\sum_{t=1}^n E|u'V_{t}|^2 =  \frac{1}{2} \sum_{j=1}^d \lambda_j \|u_j \|^2
 \end{equation}
and \eqref{WW} is maximized if $\|u_1\|=1$ and minimized if $\|u_d\|=1$. 
\end{proof}

\section*{Acknowledgments}
Vaidotas Characiejus would like to acknowledge the support of the Communauté française de Belgique, Actions de Recherche Concertées, Projects Consolidation 2016-2021. The authors would like to thank Professor Kengo Kato for sharing a detailed proof of Nazarov's inequality (\citet{chernozhukov:2017b}) and Professor Fedor Nazarov who kindly communicated the proof of \autoref{thm:Ologn} in the univariate case on MathOverflow.

\newpage
\appendix

\section*{Appendix}

\section{Constants in the high-dimensional CLT}\label{bdep}
The constant $C$ in \eqref{cherrho} depends on the parameters $b$ and $s$ (in our setting, this corresponds to $\lambda_d$ and $2d$). When we let $d\to\infty$ then we need to make this dependence explicit. This is the purpose of our \autoref{p:cher+c}, which is an extension of Proposition~3.2 of \citet{chernozhukov:2017}. We outline here the modifications needed.
Proposition~3.2 of \citet{chernozhukov:2017} is based on a series of other results which we are now formulating in the adapted version.
An important step in this extension is the following lemma, which is a refinement of Lemma~A.1 of~\citet{chernozhukov:2017}. This result is originally due to~\citet{nazarov:2003}. For the proof, we refer to \citet{chernozhukov:2017b}.

\begin{lem} \label{lemNaz} Let $Y\sim N_p(0,\Sigma)$ be such that $EY_j^2 \geq b$ for all $j=1,\dots,p$ and with $b>0$. Then for every $y\in \mathbb{R}^p$ and $\delta>0$,
\begin{equation} \label{ineqNaz}
 P( Y\leq y+ \delta)-P(Y \leq y) \leq \frac{\delta}{b^{1/2}}(\sqrt{2\log p}+2),
\end{equation}
where the inequalities between vectors are coordinatewise.
\end{lem}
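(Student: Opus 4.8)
The plan is to recast the inequality as a one-dimensional anti-concentration bound. Writing $Z:=\max_{1\le j\le p}(Y_j-y_j)$, the coordinatewise inequality $Y\le y+\delta$ means exactly $Z\le\delta$, so the left-hand side of \eqref{ineqNaz} equals $P(Z\le\delta)-P(Z\le0)=P(0<Z\le\delta)$; since $P(Z\le v)=P(Y\le y+v\mathbf 1)$, the law of $Z$ has a density $f_Z$, and \eqref{ineqNaz} follows once we show $\sup_{v\in\mathbb R}f_Z(v)\le b^{-1/2}(\sqrt{2\log p}+2)$ and integrate over $[0,\delta]$. To smooth $Z$ I would use the log-sum-exp soft-max $\Phi_\beta(x)=\beta^{-1}\log\sum_{j=1}^pe^{\beta(x_j-y_j)}$, $\beta>0$, which satisfies $\max_j(x_j-y_j)\le\Phi_\beta(x)\le\max_j(x_j-y_j)+\beta^{-1}\log p$, has gradient $\pi(x)=\nabla\Phi_\beta(x)$ with $\pi_j>0$ and $\sum_j\pi_j\equiv1$, and has Hessian obeying $\sum_{j,k}|\partial^2_{jk}\Phi_\beta|\le2\beta$. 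Setting $U_\beta=\Phi_\beta(Y)$, the sandwich gives $Z\le U_\beta\le Z+\beta^{-1}\log p$, hence $P(0<Z\le\delta)\le P(0<U_\beta\le\delta+\beta^{-1}\log p)\le(\delta+\beta^{-1}\log p)\,\sup_v f_{U_\beta}(v)$; so it is enough to prove $\sup_v f_{U_\beta}(v)\le b^{-1/2}(\sqrt{2\log p}+2)$ uniformly in $\beta$ and then let $\beta\to\infty$.

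Since $U_\beta$ is a smooth function of the Gaussian vector $Y$ with nonvanishing gradient, its density at $v$ is the limit of $E[\varphi(U_\beta)]$ over nonnegative smooth mollifiers $\varphi$ with $\int_{\mathbb R}\varphi=1$ concentrating at $v$, so the task is to bound $E[\varphi(U_\beta)]$ for any such $\varphi$. Letting $\Psi$ be the antiderivative of $\varphi$ with $0\le\Psi\le1$, the chain rule gives the key identity $\sum_{j=1}^p\partial_{Y_j}\bigl[\Psi(\Phi_\beta(Y))\bigr]=\Psi'(U_\beta)\sum_j\pi_j(Y)=\varphi(U_\beta)$, whence $E[\varphi(U_\beta)]=\sum_jE\bigl[\partial_{Y_j}H(Y)\bigr]$ with $H=\Psi\circ\Phi_\beta$ and $\|H\|_\infty\le1$.

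The final step is to estimate $\sum_jE[\partial_{Y_j}H(Y)]$ by Gaussian integration by parts. After a harmless regularisation $\Sigma\rightsquigarrow\Sigma+tI_p$, $t\downarrow0$, to avoid degenerate conditional laws, one applies Stein's identity coordinatewise, the point being to do so while exploiting that the weights $\pi_j(Y)$ appearing in $\partial_{Y_j}H=\varphi(U_\beta)\pi_j(Y)$ are nonnegative with $\sum_j\pi_j\equiv1$, so that the sum over $j$ is in effect a convex average of one-dimensional scores localised to the active coordinate; at that stage the marginal variance lower bound $\Sigma_{jj}\ge b$ supplies the factor $b^{-1/2}$, and an exponential-moment (union-type) estimate for $\max_j(Y_j-y_j)$ supplies the factor $\sqrt{2\log p}$. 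Tracking the contribution of the $\beta$-dependent Hessian term and choosing the remaining free parameters carefully yields $\sup_v f_{U_\beta}(v)\le b^{-1/2}(\sqrt{2\log p}+2)$, uniformly in $\beta$, and letting $\beta\to\infty$ completes the argument.

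\textbf{Main obstacle.} The routine part is the soft-max smoothing and the reduction to bounding $E[\varphi(U_\beta)]$; the genuine difficulty is the integration-by-parts estimate, namely extracting the \emph{sharp} constant $\sqrt{2\log p}+2$ (rather than a crude $C\sqrt{\log p}$) and doing so \emph{uniformly} over all admissible $\Sigma$ -- including singular ones, and ones in which a coordinate is almost a deterministic function of the others, where naive coordinatewise conditioning degenerates. This delicate bookkeeping is exactly what is carried out in \citet{chernozhukov:2017b}, to which we refer for the complete proof.
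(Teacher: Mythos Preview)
The paper does not give a proof of this lemma at all: it simply states that the result is originally due to \citet{nazarov:2003} and refers to \citet{chernozhukov:2017b} for the proof. In that sense your proposal, which ultimately also defers the complete argument to \citet{chernozhukov:2017b}, is consistent with what the paper does.

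That said, the sketch you outline is not the argument used in the cited reference. The proof in \citet{chernozhukov:2017b} follows Nazarov's original geometric idea: one studies the function $G(r)=P(Y\le y+r\mathbf 1)$ directly, shows that $G$ is absolutely continuous with derivative representable as a sum of one-dimensional Gaussian densities of the marginals $Y_j$ conditioned on the remaining inequalities, and then bounds $G'(r)/G(r)$ by an optimisation over all admissible $\Sigma$ together with a tail bound for $\max_j Y_j/\sqrt{\Sigma_{jj}}$; this is what produces the exact constant $\sqrt{2\log p}+2$ with no smoothing parameter involved. Your soft-max route is the natural device for Gaussian \emph{comparison} inequalities and will comfortably give an anti-concentration bound of the form $C\delta b^{-1/2}\sqrt{\log p}$ for some absolute $C$, but the step where you ``track the contribution of the $\beta$-dependent Hessian term and choose the remaining free parameters carefully'' to land exactly on $\sqrt{2\log p}+2$, uniformly in $\beta$ and in $\Sigma$, is not a routine calculation and is in fact not how the sharp constant is obtained. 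Since you end by pointing to \citet{chernozhukov:2017b} for precisely this delicate part, the proposal is fine as a heuristic, but you should be aware that the actual proof there bypasses the soft-max altogether.
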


For the rest of this section we will use essentially the same notation as in \citet{chernozhukov:2017}, with exception of the constants $K_i$ for $i\geq 1$, which in our case are independent of the parameters $n$, $p$, $b$ and $s$. Moreover, we use $V_t$ and $W_t$ in \eqref{e:cbound} instead of  $X_t$  and $Y_t$, since the latter variables already have different usage in this paper. It will be assumed throughout that $p\geq 3$. Here is some notation needed later.
\begin{align*}
L_n&=\max_{1\leq j\leq p} n^{-1}\sum_{i=1}^n E|V_{i,j}|^3;\\
M_{n,V}(\phi)&=n^{-1}\sum_{i=1}^n E\Bigl(\max_{1\leq j\leq p} |V_{i,j}|^3I_{\{\max_{1\leq j\leq p} |V_{i,j}|>\sqrt{n}/(4\phi\log p)\}}\Bigr);\\
M_{n}(\phi)&=M_{n,V}(\phi)+M_{n,W}(\phi);\\
\end{align*}

\begin{lem}[Modification of Lemma 5.1 in \citet{chernozhukov:2017}]
Denote 
\begin{align*}
\varrho_n &= \sup_{y\in\mathbb{R}^p,v\in[0,1]}|P(\sqrt{v}S_n^V+\sqrt{1-v}S_n^W\leq y)-P(S_n^W\leq y)|.
\end{align*} 
Suppose that there exists some constant $b>0$ such that $n^{-1}\sum_{i=1}^n E[V_{i,j}^2]\geq b$ for all $j=1,\ldots, p$. Then for all $\phi\geq 1$ it holds that
$$
\varrho_n\leq  K_1 \Bigl\lbrace \frac{\phi^2\log^2p}{n^{1/2}}\Bigl( \phi L_n \varrho_n +  L_n \frac{\log^{1/2}p}{ b^{1/2}}  + \phi M_n(\phi) \Bigr)  +  \frac{\log^{1/2}p}{\phi \, b^{1/2}} \Bigr\rbrace.
$$
\end{lem}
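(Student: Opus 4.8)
The plan is to adapt, essentially step by step, the Lindeberg--Slepian interpolation argument that \citet{chernozhukov:2017} use to prove their abstract high-dimensional central limit theorem (their Lemma~5.1 and the lemmas it rests on), being careful at each stage about the places where their universal constant silently absorbs $b$ and the sparsity parameter, and replacing their anti-concentration input by our \autoref{lemNaz}, which keeps the factor $b^{-1/2}$ explicit.

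First I would fix $y\in\mathbb R^p$ and $v\in[0,1]$ and replace the orthant event $\{w\le y\}$ by a smooth functional. Using the soft-max $F_\beta(w)=\beta^{-1}\log\sum_{j=1}^p e^{\beta w_j}$, which satisfies $0\le F_\beta(w)-\max_{1\le j\le p}w_j\le\beta^{-1}\log p$, together with a fixed thrice continuously differentiable $g_0\colon\mathbb R\to[0,1]$ with $g_0\equiv1$ on $(-\infty,0]$ and $g_0\equiv0$ on $[1,\infty)$, the composition $m(w)=g_0\bigl(\phi F_\beta(w-y)\bigr)$ sandwiches $w\mapsto I\{w\le y\}$ up to a thin transition slab of width of order $\phi^{-1}+\beta^{-1}\log p$. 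The choice $\beta\asymp\phi\log p$ balances the two smoothing errors, so the slab has width $O(\phi^{-1})$.

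Second, I would interpolate: with $Z(t)=\sqrt t\,S_n^V+\sqrt{1-t}\,S_n^W$ one has $E[m(Z(1))]-E[m(Z(0))]=\int_0^1\tfrac{d}{dt}E[m(Z(t))]\,dt$. Differentiating, applying Gaussian integration by parts to the $S_n^W$-part and a third-order Taylor expansion around the leave-one-out vector $Z_i(t)=Z(t)-\sqrt{t/n}\,V_i-\sqrt{(1-t)/n}\,W_i$ to the $S_n^V$-part, the matching of first and second moments of $V_i$ and $W_i$ cancels the leading terms and leaves a remainder controlled by the third partial derivatives of $m$. Computing $\partial_{jkl}m$ in terms of derivatives of $F_\beta$ (where the identity $\sum_j\partial_jF_\beta\equiv1$ keeps the sums of mixed partials under control) bounds this remainder, after the choice $\beta\asymp\phi\log p$, by a constant times $\phi^3\log^2 p\cdot n^{-1/2}$ multiplied by the third absolute moments of the $V_i$ restricted to the slab of $m$. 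Splitting those moments into a small-jump part, bounded by $L_n$ times the probability that $Z(t)$ lies in the slab, and a large-jump part, bounded by $M_n(\phi)$, and then bounding the slab probability by $\varrho_n$ plus the Gaussian anti-concentration estimate $C\,b^{-1/2}\phi^{-1}\log^{1/2}p$ obtained from \autoref{lemNaz} applied to $S_n^W$, produces exactly the three contributions $\phi L_n\varrho_n$, $L_n\,b^{-1/2}\log^{1/2}p$ and $\phi M_n(\phi)$ inside the brace, each carrying the prefactor $\phi^2\log^2 p\,n^{-1/2}$. The remaining error, that of replacing the smoothed functionals at the two endpoints $t=0,1$ by the genuine orthant probabilities, is handled by comparing with the Gaussian endpoint $S_n^W$, to which \autoref{lemNaz} applies directly; it contributes the last additive term $b^{-1/2}\phi^{-1}\log^{1/2}p$, with no further self-reference in $\varrho_n$. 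Taking the supremum over $y$ and $v\in[0,1]$ yields the stated recursion.

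The routine part of this is the differential calculus for $m$ and the moment-matching Taylor estimates. The real obstacle is the bookkeeping: one must verify that \emph{every} constant produced along the way is genuinely independent of $n$, $p$, $b$ and $s$. Concretely, this means confirming that the derivative bounds for $m$ depend on $p$ only through the displayed logarithmic factors; that the sparsity parameter $s$ does not enter at this stage at all --- it is introduced only afterwards, when one passes from orthants to intersections of finitely many $s$-dimensional convex sets; that the truncation level $\sqrt n/(4\phi\log p)$ in the definition of $M_n(\phi)$ is exactly the threshold produced by the third-order remainder estimate; and, most importantly for our purposes, that the factor $b^{-1/2}$ enters with the correct power in each term, which is precisely what \autoref{lemNaz} (the sharpened Nazarov inequality of \citet{nazarov:2003}, in the form of \citet{chernozhukov:2017b}) is designed to guarantee.
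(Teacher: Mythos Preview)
Your proposal is correct and follows essentially the same approach as the paper: both proceed by re-running the Lindeberg--Slepian interpolation argument of \citet{chernozhukov:2017} for their Lemma~5.1, substituting the sharpened anti-concentration bound of \autoref{lemNaz} for their Lemma~A.1 at the two places where it enters (the slab-probability estimate inside the interpolation and the smoothing error at the endpoints), and checking that no other constant depends on $b$. The paper's version simply states this substitution without spelling out the underlying interpolation structure, whereas you have unpacked it in full.
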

\begin{proof}
Replace in the proof of Lemma 5.1 in \citet{chernozhukov:2017} the bound obtained  from their Lemma~A.1 by the Lemma~\ref{lemNaz}. This lemma is used in two places. At all other places the constant $K_1$ required in \citet{chernozhukov:2017} is not affected by the value of $b$.
\end{proof}

The lemma can be easily extended to hyperrectangles. Let $\mathcal{A}^{\text{re}}$ be the class of hyperrectangles in $\mathbb{R}^p$.

\begin{lem}[Modification of Corollary 5.1 in \citet{chernozhukov:2017}]\label{l:cor5.1}
Denote 
\begin{align*}
\varrho_n' &= \sup_{A\in\mathcal{A}^{\mathrm{re}},v\in[0,1]}|P(\sqrt{v}S_n^V+\sqrt{1-v}S_n^W\in A)-P(S_n^W\in A)|.
\end{align*} 
Suppose that there exists some constant $b>0$ such that $n^{-1}\sum_{i=1}^n E[V_{i,j}^2]\geq b$ for all $j=1,\ldots, p$. Then for all $\phi\geq 1$ it holds that
$$
\varrho_n'\leq  K_2 \Bigl\lbrace \frac{\phi^2\log^2p}{n^{1/2}}\Bigl( \phi L_n \varrho_n' +  L_n \frac{\log^{1/2}p}{ b^{1/2}}  + \phi M_n(2\phi) \Bigr)  +  \frac{\log^{1/2}p}{\phi \, b^{1/2}} \Bigr\rbrace.
$$
\end{lem}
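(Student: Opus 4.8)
The plan is to deduce the hyperrectangle bound from the already established orthant (coordinatewise half-space) bound by the standard ``doubling'' trick, carried out in dimension $2p$.

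First I would introduce the linear map $\iota\colon\mathbb{R}^p\to\mathbb{R}^{2p}$ given by $\iota(z)=(z',-z')'$ and record the elementary observation that, for any hyperrectangle $A=\prod_{j=1}^p[a_j,b_j]$ (endpoints allowed in $\mathbb{R}\cup\{\pm\infty\}$), one has $z\in A$ if and only if $\iota(z)\le(b_1,\dots,b_p,-a_1,\dots,-a_p)'$, the inequality being coordinatewise. Since $\iota$ is linear, $\iota(\sqrt v\,S_n^V+\sqrt{1-v}\,S_n^W)=\sqrt v\,n^{-1/2}\sum_{i=1}^n\iota(V_i)+\sqrt{1-v}\,n^{-1/2}\sum_{i=1}^n\iota(W_i)$, and the vectors $\iota(W_i)=(W_i',-W_i')'$ are independent Gaussians with $\iota(W_i)\sim N_{2p}(0,E[\iota(V_i)\iota(V_i)'])$, so $\{\iota(W_i)\}_{i\le n}$ is a legitimate Gaussian companion for $\{\iota(V_i)\}_{i\le n}$. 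Denoting by $\tilde\varrho_n$ the orthant quantity associated with $\{\iota(V_i)\}$ and $\{\iota(W_i)\}$ in $\mathbb{R}^{2p}$, the above identity shows $\varrho_n'=\tilde\varrho_n$; in particular $\varrho_n'\le\tilde\varrho_n$.

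Next I would invoke the preceding lemma (the modification of Lemma~5.1 in \citet{chernozhukov:2017}) for these $2p$-dimensional vectors. Its hypothesis holds because $n^{-1}\sum_{i=1}^nE[\iota(V_i)_j^2]\ge b$ for every $j=1,\dots,2p$, each coordinate of $\iota(V_i)$ being $\pm V_{i,k}$ for some $k$. It then remains to track how the auxiliary quantities transform under $\iota$: since $\max_{1\le j\le 2p}|\iota(V_i)_j|=\max_{1\le j\le p}|V_{i,j}|$ (and likewise for $W$), the third-moment term is unchanged, $L_{n,\iota(V)}=L_n$; and using $\log(2p)\le2\log p$ for $p\ge3$, the truncation threshold satisfies $\sqrt n/(4\phi\log(2p))\ge\sqrt n/(4(2\phi)\log p)$, whence $M_{n,\iota}(\phi)\le M_n(2\phi)$. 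Finally $\log^{1/2}(2p)\le\sqrt2\,\log^{1/2}p$ and $\log^2(2p)\le4\log^2p$.

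Substituting these into the conclusion of that lemma and replacing $\tilde\varrho_n$ by $\varrho_n'$ on both sides, all the numerical factors ($4$, $\sqrt2$) can be absorbed into a single universal constant $K_2=4\sqrt2\,K_1$, which yields precisely the asserted inequality. The only mildly delicate point is the bookkeeping $M_n(\phi)\mapsto M_n(2\phi)$ --- this is exactly why the statement carries $M_n(2\phi)$ rather than $M_n(\phi)$ --- but it reduces to using $\log(2p)\le2\log p$ in the correct direction; everything else is linear algebra and a harmless rescaling of constants.
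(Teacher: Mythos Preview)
Your proposal is correct and is precisely the standard doubling trick from \citet{chernozhukov:2017} that the paper has in mind when it merely says ``the lemma can be easily extended to hyperrectangles'' without giving a proof. Your tracking of the constants---in particular the observation that $\log(2p)\le 2\log p$ forces the truncation threshold to shift from $\phi$ to $2\phi$, which is exactly why $M_n(2\phi)$ appears in the statement---is accurate, and the identification $\varrho_n'=\tilde\varrho_n$ (not just an inequality) is important so that the self-referential $\varrho_n'$ on the right-hand side is legitimate.
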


\begin{lem}[Modification of Theorem 2.1 in \citet{chernozhukov:2017}]\label{l:th21}
Suppose that there exists some constant $b\in (0,1]$ such that $n^{-1}\sum_{i=1}^n E[V_{i,j}^2]\geq b$ for all $j=1,\ldots, p$. Then if $\overline{L}_n\geq L_n$, 
\begin{align*}
\rho_n(\mathcal{A}^{\mathrm{re}}) \leq  K_3 \Bigl\lbrace   \frac{ \log^{7/6}p \,\overline{L}_n^{1/3} }{b^{1/2}\, n^{1/6}}+
\frac{M_n(\phi_n)}{ \overline{L}_n} 
\Bigr\rbrace,
\end{align*}
where $\phi_n = \gamma\,\frac{n^{1/6}}{\overline{L}_n^{1/3} \log^{2/3} p}$ and $\gamma=\frac{1}{ K_2\vee 1 }$. 
\end{lem}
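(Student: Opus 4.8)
The plan is to obtain \autoref{l:th21} from \autoref{l:cor5.1} by optimising the free parameter $\phi$ in the recursive bound there and then absorbing the self-referential term. First I would record the elementary reduction $\rho_n(\mathcal{A}^{\mathrm{re}})\le\varrho_n'\le 1$: the first inequality is the case $v=1$, and the second holds because $\varrho_n'$ is a supremum of differences of probabilities. Hence it suffices to bound $\varrho_n'$. I would also dispose of the degenerate regime: if $\phi_n<2$, then $\gamma n^{1/6}<2\overline{L}_n^{1/3}\log^{2/3}p$, and since $p\ge3$ and $b\le1$ this forces $\overline{L}_n^{1/3}\log^{7/6}p/(b^{1/2}n^{1/6})$ to exceed a fixed positive constant; so the asserted bound holds trivially once $K_3$ is large enough, and we may henceforth assume $\phi_n\ge2$.

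Next I would apply \autoref{l:cor5.1} with $\phi=\phi_n/2\ge1$, which is exactly the choice that turns the $M_n(2\phi)$ appearing there into $M_n(\phi_n)$. The crucial point is the coefficient of $\varrho_n'$ on the right-hand side: it equals $K_2\phi^3 L_n\log^2 p/n^{1/2}=K_2\phi_n^3 L_n\log^2 p/(8n^{1/2})$, and substituting $\phi_n^3=\gamma^3 n^{1/2}/(\overline{L}_n\log^2 p)$ together with $L_n\le\overline{L}_n$ bounds it by $K_2\gamma^3/8$, which is $\le1/8$ by the choice $\gamma=1/(K_2\vee1)$. Since $\varrho_n'\le1<\infty$, this term may be moved to the left-hand side at the cost of a factor $(1-1/8)^{-1}\le2$, giving
\[
\varrho_n'\le 2K_2\left\{\frac{\phi_n^2\log^2 p}{4n^{1/2}}\left(L_n\frac{\log^{1/2}p}{b^{1/2}}+\frac{\phi_n}{2}M_n(\phi_n)\right)+\frac{2\log^{1/2}p}{\phi_n b^{1/2}}\right\}.
\]

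It then remains only to substitute $\phi_n=\gamma n^{1/6}/(\overline{L}_n^{1/3}\log^{2/3}p)$ into the three resulting terms and simplify the exponents. A short computation gives $\phi_n^2\log^2 p/n^{1/2}=\gamma^2 n^{-1/6}\overline{L}_n^{-2/3}\log^{4/3}p$ and $\phi_n^3\log^2 p/n^{1/2}=\gamma^3/\overline{L}_n$. Consequently the first term is a constant times $L_n\overline{L}_n^{-2/3}\log^{7/6}p/(b^{1/2}n^{1/6})\le\overline{L}_n^{1/3}\log^{7/6}p/(b^{1/2}n^{1/6})$ (using $L_n\le\overline{L}_n$ once more), the term with $M_n$ is a constant times $M_n(\phi_n)/\overline{L}_n$, and the last term $\log^{1/2}p/(\phi_n b^{1/2})$ is a constant times $\overline{L}_n^{1/3}\log^{7/6}p/(b^{1/2}n^{1/6})$. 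Collecting the two terms of the first kind and absorbing all numerical constants into $K_3$ yields the claim.

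The only step that is not pure bookkeeping is the absorption argument in the second paragraph: one must verify that the factor multiplying $\varrho_n'$ on the right can be forced strictly below $1$ uniformly in $n$, $p$ and $b$ by the explicit choice of $\phi_n$ and $\gamma$, and that $\varrho_n'$ is a priori finite so that the subtraction is legitimate (both are immediate here, but they are the conceptual heart of the matter). I would also emphasise the source of the explicit $b^{-1/2}$ dependence: \autoref{l:cor5.1} is itself obtained by inserting the Nazarov anti-concentration estimate \autoref{lemNaz}, whose right-hand side is proportional to $b^{-1/2}$, in place of the cruder Lemma~A.1 of \citet{chernozhukov:2017}; from there on the factor $b^{-1/2}$ simply propagates through the recursion, and tracking it explicitly---rather than hiding it in a constant---is the entire content of the modification, the rest being a verbatim rerun of the argument in \citet{chernozhukov:2017}.
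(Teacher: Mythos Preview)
Your proposal is correct and follows exactly the paper's approach: dispose of the regime $\phi_n<2$ trivially, then apply \autoref{l:cor5.1} with $\phi=\phi_n/2$ and absorb the self-referential term (the paper's proof merely sketches this, while you spell out the absorption and the substitution). One harmless slip: in the intermediate computation you write $\phi_n^2\log^2 p/n^{1/2}=\gamma^2 n^{-1/6}\overline{L}_n^{-2/3}\log^{4/3}p$, but the exponent of $\log p$ should be $2/3$, not $4/3$; your subsequent conclusion about the first term being a constant times $\overline{L}_n^{1/3}\log^{7/6}p/(b^{1/2}n^{1/6})$ is nevertheless correct, so this is a transcription error rather than a gap.
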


\begin{proof}
Note that $K_3 \frac{ \log^{7/6}p \,\overline{L}_n^{1/3} }{b^{1/2}\, n^{1/6}}=K_3(\gamma\,\frac{\log^{1/2}p}{b^{1/2}})\frac{1}{\phi_n}\geq \frac{K_3}{K_2\vee 1}\,\frac{1}{\phi_n} $. Thus the result is trivial if $\phi_n\leq 2$, because we can choose $K_3=2(K_2\vee 1)$. So we assume without loss of generality that $\phi_n\geq 2$ (hence $\phi\geq 1$) and apply Lemma~\ref{l:cor5.1} with $\phi=\phi_n/2$ given above. \end{proof}

In the following we refer to conditions (i)--(iv) from Section~\ref{proofsart3}.

\begin{lem}[Modification of Proposition 2.1 in \citet{chernozhukov:2017}]\label{p:pr21}
Suppose that there exists some constant $b\in (0,1]$ such that $n^{-1}\sum_{i=1}^n E[V_{i,j}^2]\geq b$ for all $j=1,\ldots, p$. Suppose, moreover, that condition (ii) holds for some sequence $B_n\geq 1$. Then, under (iv) we have
\begin{equation} \label{bounds2.1bis}
 \rho_n(\mathcal{A}^{\mathrm{re}}) \leq  K_4 \Bigl\{\frac{ B_n^{1/3}\; \log^{7/6}(pn)  }{b^{1/2}\; n^{1/6}}  +   \frac{ B_n^{2/3}\; \log(pn)  }{b^{1/2}\,n^{\frac{q-2}{3q}}}\Bigr\}.
\end{equation} 
\end{lem}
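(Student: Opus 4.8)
The plan is to reproduce the proof of Proposition~2.1 in \citet{chernozhukov:2017}, substituting the $b$-explicit estimates of the preceding lemmas for the originals. The starting point is \autoref{l:th21} (our modification of Theorem~2.1): for every $\overline L_n\ge L_n$ it gives
\[
	\rho_n(\mathcal A^{\mathrm{re}})
	\le K_3\Bigl\{\frac{\log^{7/6}p\;\overline L_n^{1/3}}{b^{1/2}\,n^{1/6}}+\frac{M_n(\phi_n)}{\overline L_n}\Bigr\},
	\qquad
	\phi_n=\gamma\,\frac{n^{1/6}}{\overline L_n^{1/3}\log^{2/3}p}.
\]
First I would note that condition~(ii) with $k=1$ yields $L_n=\max_{j}n^{-1}\sum_i E|V_{i,j}|^3\le B_n$, so the choice $\overline L_n=B_n$ is legitimate and already produces, after the harmless replacement $\log p\le\log(pn)$, the first term on the right-hand side of~\eqref{bounds2.1bis}. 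It then remains to bound the truncated-moment remainder $M_n(\phi_n)/B_n$.

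To estimate $M_n(\phi_n)=M_{n,V}(\phi_n)+M_{n,W}(\phi_n)$ I would use condition~(iv) --- the polynomial-moment surrogate for the sub-exponential condition~(iii) --- and treat the two parts separately. With $\overline L_n=B_n$ the truncation level in the definition of $M_{n,V}$ is $u_n=\sqrt n/(4\phi_n\log p)\asymp n^{1/3}B_n^{1/3}/\log^{1/3}p$; a Markov/H\"older estimate using~(iv) (which controls the $q$-th moment of $\max_j|V_{i,j}|$ by $2B_n^q$, hence also the tail $P(\max_j|V_{i,j}|>u_n)$) bounds $M_{n,V}(\phi_n)$ by a constant times $B_n^q u_n^{3-q}$. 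For the Gaussian part, (iv) also gives $\max_j E[V_{i,j}^2]\lesssim B_n^2$, so the $W_i$ have uniformly bounded coordinatewise variances and the standard sub-Gaussian maximal-tail estimate shows $M_{n,W}(\phi_n)$ decays super-polynomially in $u_n/B_n$ and is therefore negligible next to $M_{n,V}(\phi_n)$. Inserting the expression for $u_n$ and simplifying writes $M_n(\phi_n)/B_n$ in terms of $B_n$, $n$ and $\log(pn)$; combining this with the first term, together with a case distinction that disposes of the regime in which the claimed right-hand side of~\eqref{bounds2.1bis} already exceeds $1$ via the trivial bound $\rho_n(\mathcal A^{\mathrm{re}})\le 1$, yields~\eqref{bounds2.1bis}. (Optimizing the free parameter $\overline L_n$ in \autoref{l:th21} so as to balance the two contributions is an alternative route to the same conclusion.)

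The moment manipulations (Markov's and H\"older's inequalities, the Gaussian tail bound) are routine. The one genuine difficulty --- and the entire purpose of this appendix --- is to keep the dependence on $b$ at exactly $b^{-1/2}$, rather than at a worse negative power, throughout the argument; this is precisely what the sharpened Nazarov-type inequality \autoref{lemNaz} buys, and it must be threaded carefully through \autoref{l:cor5.1} and \autoref{l:th21} and then not lost in the present step.
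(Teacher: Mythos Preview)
Your overall plan---apply the modified Theorem~2.1 (\autoref{l:th21}), keep track of the explicit $b^{-1/2}$, and dispose of the trivial regime---is right, and you correctly identify the main point of this appendix. But the specific choice $\overline L_n=B_n$ does not deliver the stated bound. With $\overline L_n=B_n$ one has $u_n\asymp n^{1/3}B_n^{1/3}/\log^{1/3}p$, and the Markov estimate from (iv) gives
\[
\frac{M_{n,V}(\phi_n)}{\overline L_n}
\lesssim B_n^{q}\,\overline L_n^{-q/3}\,n^{-(q-3)/3}\log^{(q-3)/3}p
= B_n^{2q/3}\,n^{-(q-3)/3}\log^{(q-3)/3}p.
\]
This carries the power $B_n^{2q/3}$, whereas the second term in \eqref{bounds2.1bis} has $B_n^{2/3}$ independently of $q$. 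The gap is not closed by the case distinction: even when both target terms are $\le 1$, for instance under $B_n\lesssim n^{(q-2)/(2q)}/\log^{3/2}(pn)$, the ratio of your remainder to the target second term is of order $B_n^{2(q-1)/3}n^{(-q^2+4q-2)/(3q)}$, which for $q=4$ is $B_n^{2}n^{-1/6}$ and blows up along the boundary of the admissible range.

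What the paper (following Chernozhukov et al.) actually does is precisely your parenthetical ``alternative'': it takes
\[
\overline L_n=B_n+\frac{B_n^{2}}{n^{1/2-2/q}\log^{1/2}p},
\]
so that $(a+b)^{1/3}\le a^{1/3}+b^{1/3}$ makes the first term of \autoref{l:th21} split into the two target terms $T_1+T_2$, while the additional piece $B_n^{2}/(n^{1/2-2/q}\log^{1/2}p)$ enlarges $\overline L_n$ enough to damp $M_n(\phi_n)/\overline L_n$. The constraints (32)--(33) of Chernozhukov et al.\ are then invoked (and are harmless since outside them the bound is trivially $\ge1$). So your ``alternative route'' is in fact the required one; the simpler choice $\overline L_n=B_n$ is not sufficient.
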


\begin{proof}
The proof is based on Lemma~\ref{l:th21}, choosing $\phi_n=\gamma (n^{-1} \overline{L}_n^2 \log^4 p)^{-1/6}$,  and $\overline{L}_n= B_n+\frac{B_n^2}{n^{1/2-2/q}\log^{1/2} p}$. In~\citet{chernozhukov:2017} exactly the same terms are used and worked out, but their bound corresponding to our \eqref{bounds2.1bis} does not involve the factor $b^{-1/2}$ (here it comes from our Lemma~\ref{l:th21}). The dependence on $b$ in their bound remains latent.  In particular it is implicit in the constant corresponding to our $\gamma$  (they denote it $K_2$). In our case this constant doesn't depend on $b$. 

We also note that \citet{chernozhukov:2017} request in their proof the constants
$$
\frac{ B_n^{1/3}\; \log^{7/6}(pn)  }{n^{1/6}}\leq \min\{C\gamma^{-1/2},\gamma/2\}\quad\text{and}\quad\frac{ B_n^{2/3}\; \log(pn)  }{n^{\frac{q-2}{3q}}}\leq \gamma/2,
$$
with some absolute constant $C$.  (See inequalities (32) and (33) in \citet{chernozhukov:2017}.)  We can impose these assumptions as well, since otherwise \eqref{bounds2.1bis} becomes trivial, by choosing $K_4=K_4(\gamma)$ big enough. Here we use again that our $\gamma$  doesn't depend on $b$. Hence, these assumptions will also not invoke dependence of $K_4$ on $b$.
\end{proof}

For the next result we need further terms and definitions, which one can find in Section~3 in~\citet{chernozhukov:2017}. For convenience we give here a quick review. For a closed convex set $A$ we define a mapping $\mathcal{S}_A$, which maps from  $v\in \mathbb{S}^{p-1}(=\{v\in \mathbb{R}^{p}\colon \|v\|=1\})$ to $\mathcal{S}_A(v)=\mathrm{sup}\{w'v\colon w\in A\}$. Then $A=\cap_{v\in \mathbb{S}^{p-1}}\{w\in\mathbb{R}^p\colon w'v\leq \mathcal{S}_A(v)\}$. If $A$ is a convex polytope with at most $m$ facets then it is called $m$-generated. If $\mathcal{V}(A)$ are the $m$ unit vectors orthogonal to the facets, then $A=\cap_{v\in \mathcal{V}(A)}\{w\in\mathbb{R}^p\colon w'v\leq \mathcal{S}_A(v)\}$. For an $m$-generated set $A^m$, set $A^{m,\epsilon}=\cap_{v\in \mathcal{V}(A^m)}\{w\in\mathbb{R}^p\colon w'v\leq \mathcal{S}_{A^m}(v)+\epsilon\}$. A convex set $A$ admits an approximation with precision $\epsilon$ by an $m$-generated convex set $A^m$ if $A^m\subset A\subset A^{m,\epsilon}$

We are now ready to define the class 
$\mathcal{A}^{\mathrm{si}}(d)$, which is the class  of Borel sets $A\in\mathbb{R}^p$ such that $A$ admits an approximation with precision $1/n$ by an $m$-generated convex set $A^m$ with $m\leq (pn)^d$. (In \citet{chernozhukov:2017} a more general class $\mathcal{A}^{\mathrm{si}}(d)$ is introduced, but for us only the case $a=1$ is relevant.) Consider $\mathcal{A}\subset \mathcal{A}^{\mathrm{si}}(d)$. For some $A\in\mathcal{A}$ let $A^m=A^m(A)$ be the approximating $m$-generated set. For the process $V_t$ let $\tilde V_t=(\tilde V_{t,1},\ldots, \tilde V_{t,m})'=(v'V_t)_{v\in \mathcal{V}(A^m)}$, $t=1,\ldots, n$ and consider the following conditions.

\begin{itemize}
\item[(i')] $n^{-1}\sum_{t=1}^n E|\tilde V_{t,j}|^2\ge b$  for all  $j=1,\ldots, m$;
\item[(ii')]$n^{-1}\sum_{t=1}^n E|\tilde V_{t,j}|^{2+k}\leq B_n^k$ for all $j=1,\ldots,m$ and $k=1,2$;
\item[(iv')]$E\max_{1\le j\le p}(|\tilde V_{t,j}|/B_n)^q\le 2$ for all $t=1,\ldots,n$.
\end{itemize}

\begin{lem}[Modification of Proposition 3.1 in \citet{chernozhukov:2017}]\label{p:pr31}
Let $\mathcal{A}$ be a subclass of $\mathcal{A}^{\mathrm{si}}(d)$ such that (i'), (ii') and (iv') are satisfied for all $A\in\mathcal{A}$.  Then
\begin{equation} \label{boundP31}
 \rho_n(\mathcal{A}) \leq  K_5 \Bigl\{\frac{ B_n^{1/3}\; \log^{7/6}\big((pn)^d\big)  }{b^{1/2}\; n^{1/6}}  +   \frac{ B_n^{2/3}\; \log\big((pn)^d\big)  }{b^{1/2}\,n^{\frac{q-2}{3q}}}\Bigr\}.
\end{equation} 
The constant $K_5$ does not depend on $d$.
\end{lem}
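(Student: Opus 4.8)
The plan is to follow the proof of Proposition~3.1 in \citet{chernozhukov:2017} essentially verbatim, the only novelty being to propagate the explicit $b^{-1/2}$ factor that \autoref{p:pr21} now carries and to verify that none of the constants introduced along the way depends on $d$. First I would fix a set $A$ in the class $\mathcal A\subset\mathcal A^{\mathrm{si}}(d)$ and replace it by its $m$-generated approximant $A^m$, with $m\le (pn)^d$ and $A^m\subset A\subset A^{m,1/n}$. Passing to the coordinates $\tilde V_t=(v'V_t)_{v\in\mathcal V(A^m)}\in\mathbb R^m$ and the corresponding Gaussian vectors $\tilde W_t$, the events $\{S_n^V\in A^m\}$ and $\{S_n^V\in A^{m,1/n}\}$ become events of the form $\{S_n^{\tilde V}\in R\}$ for hyperrectangles $R\subset\mathbb R^m$, and similarly for $W$; this is the step that turns an $\mathcal A^{\mathrm{si}}(d)$ problem in $\mathbb R^p$ into a hyperrectangle problem in $\mathbb R^m$. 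Here one may assume $b\le 1$ without loss of generality, since (i$'$) with a value of $b$ exceeding $1$ also holds with $b=1$ and the target bound is then only weaker.

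Next I would use the sandwich $A^m\subset A\subset A^{m,1/n}$ to bound $|P(S_n^V\in A)-P(S_n^W\in A)|$ by the hyperrectangle discrepancy between $S_n^{\tilde V}$ and $S_n^{\tilde W}$ plus the Gaussian ``ring'' probability $P(S_n^{\tilde W}\in A^{m,1/n})-P(S_n^{\tilde W}\in A^m)$. For the hyperrectangle discrepancy I would apply \autoref{p:pr21} with the role of $p$ played by $m$ and the role of $V_t$ by $\tilde V_t$: conditions (i$'$), (ii$'$), (iv$'$) are precisely its hypotheses (i), (ii), (iv) for the arrays $\{\tilde V_{t,j}\}$, and since $m\le(pn)^d$ we have $\log(mn)\le 2\log((pn)^d)$, so the resulting bound has the claimed form with $\log((pn)^d)$ in place of $\log(pn)$ --- crucially the constant in \autoref{p:pr21} does not depend on the dimension, hence not on $d$. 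For the ring probability I would invoke the anti-concentration inequality \autoref{lemNaz}: each coordinate of the centred Gaussian vector $S_n^{\tilde W}$ has variance at least $b$ by (i$'$), and $A^{m,1/n}$ relaxes each of the (at most $m$) facet constraints of $A^m$ by $1/n$, so this probability is at most $b^{-1/2}(\sqrt{2\log m}+2)/n$, which is of strictly smaller order than the two main terms and is absorbed into the constant.

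Combining these two estimates and taking the supremum over $A\in\mathcal A$ gives \eqref{boundP31}. The only point needing genuine care --- and the one I would flag as the main obstacle --- is the bookkeeping of constants: tracing back through \autoref{l:cor5.1}, \autoref{l:th21} and \autoref{p:pr21}, one must confirm that replacing \citet{chernozhukov:2017}'s Lemma~A.1 by \autoref{lemNaz} is the \emph{only} place where $b$ enters the argument, that the constant $\gamma$ (which plays the role of \citet{chernozhukov:2017}'s $K_2$) is $b$-free so that the ``trivial case'' reductions in the proofs of \autoref{l:th21} and \autoref{p:pr21} do not silently reintroduce a dependence on $b$, and that $d$ enters only inside logarithms. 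Once this is checked, the constant $K_5$ may be taken to depend on none of $n$, $p$, $b$, $s$ or $d$.
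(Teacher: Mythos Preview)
Your proposal is correct and follows essentially the same route as the paper's proof: sandwich $A$ between $A^m$ and $A^{m,1/n}$, reduce to hyperrectangle discrepancies for the transformed vectors $\tilde V_t$, apply \autoref{p:pr21} with $m\le(pn)^d$ in the role of $p$, and control the Gaussian ring probability via \autoref{lemNaz}. The paper's proof is slightly terser but records the same two ingredients --- it writes the bound as the Nazarov term $\frac{1}{nb^{1/2}}(\sqrt{2\log((pn)^d)}+2)$ plus $\bar\rho=\max\{|P(S_n^V\in A^m)-P(S_n^W\in A^m)|,|P(S_n^V\in A^{m,\epsilon})-P(S_n^W\in A^{m,\epsilon})|\}$ --- so just be sure your ``hyperrectangle discrepancy'' covers both $A^m$ and $A^{m,1/n}$, as the sandwich argument needs both.
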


\begin{proof} Following the proof of Proposition~3.1 in \citet{chernozhukov:2017} and applying our Lemma~\ref{lemNaz} instead of their Lemma~A.1 we obtain
$$
|P(S_n^V\in A)-P(S_n^W\in A)|\leq \frac{1}{nb^{1/2}}(\sqrt{2\log \big((pn)^d\big)}+2)+\bar{\rho},
$$
where 
$$
\bar{\rho}=\max\big\{|P(S_n^V\in A^m)-P(S_n^W\in A^m)|,|P(S_n^V\in A^{m,\epsilon})-P(S_n^W\in A^{m,\epsilon})|\big\}.
$$
For $\bar{\rho}$ we can use Lemma~\ref{p:pr21}, and apply it to $\tilde V_1,\ldots, \tilde V_n$. From this we get the bound in \eqref{boundP31} which in turn dominates $\frac{1}{nb^{1/2}}(\sqrt{2\log \big((pn)^d\big)}+2)$.
\end{proof}

\begin{proof}[Proof of Proposition~\ref{p:cher+c}] We need to adapt the proof of Proposition~3.2 in \citet{chernozhukov:2017} and make the dependence on $b$ and $s$ explicit. Since we are interested in the case $b\to 0$, we can assume that $b\leq 1$. Here are the steps and modifications. In the following $C$ is an absolute constant which may vary from place to place.
\begin{enumerate}
\item It is sufficient to consider sparsely convex sets $A$ with $\max_{1\leq j\leq p}|w_j|\leq pn^{5/2}$ for all  $w=(w_1,\ldots, w_p)'\in A$. The argument is the same as in \citet{chernozhukov:2017}.
\item Consider the subclass $\mathcal{A}_1^\text{sp}(s)$ of sets in $\mathcal{A}^\text{sp}(s)$ which contain a ball of radius $1/n$. Using their Lemma~D.1 with $\gamma=1$ it is easy to show that $A\in \mathcal{A}_1^\text{sp}(s)$ is approximable by an $m$-generated set $A^m$ with precision $1/n$ and with $m\leq (pn)^{2 s^2}$, provided $n\geq n(\gamma)=n(1)$. This latter constraint is not a restriction, since  for $n<n(1)$ we may just choose a big enough constant $K_5$. The target is then to show conditions (i'), (ii') and (iv') and apply Lemma~\ref{p:pr31}. Condition (i') follows from condition (i) and the statement in Lemma~D.1 that $A^m$ can be chosen to satisfy $\|v\|_0\leq s$ for all $v\in \mathcal{V}(A^m)$. Next, in \citet{chernozhukov:2017}  it is shown that (ii') holds with $B_n$ replaced by $B_n'=B_ns^{3}$ and (iv'') with  $B_n$ replaced by $B_ns$. Since we require the original $B_n$ to be bounded we get from \eqref{boundP31}
\begin{equation}\label{e:finalbound}
\rho_n(\mathcal{A}_1^\text{sp}(s)) \leq  K_6 \Bigl\{\frac{ s^4\log^{7/6}\big(pn\big)  }{b^{1/2}\; n^{1/6}}\Bigr\}.
\end{equation}
\item Let $\mathcal{A}_2^\text{sp}(s)=\mathcal{A}^\text{sp}(s)\backslash \mathcal{A}_1^\text{sp}(s)$. Let us first consider the case of an $A\in \mathcal{A}_2^\text{sp}(s)$ where we have at least one $A_k$ in the representation $A=\cap A_k$ which does not contain a ball of radius $1/n$. Remember that $I_{A_k}(x)$ depends only on $s$ components of $x\in\mathbb{R}^p$, say $\tilde x=(x_{j_1},\ldots, x_{j_s})\in\mathbb{R}^s$. Define a convex set $\widetilde{A}_k\subset \mathbb{R}^s$ such that $I_{A_k}(x)=I_{\widetilde{A}_k}(\tilde x)$ for all $x\in\mathbb{R}^p$. For $J=J(A_k)=(j_1,\ldots, j_s)$ we then have $\{S_n^V\in A_k\}=\{S_{n,J}^V\in \tilde A_k\}$. By Lemma~A.2 in \citet{chernozhukov:2017} it follows that
\begin{align*}
P(S_n^W\in A)\leq P(S_n^W\in A_k)=P(S_{n,J}^W\in \widetilde{A}_k)\leq C\frac{1}{n}\sqrt{\|\Omega^{-1}_J\|_\mathcal{S}}\leq C\frac{1}{n^{3/2}}\frac{s^{1/4}}{\sqrt{b}},
\end{align*}
where $\Omega_J=\mathrm{Var}(S_{n,J}^W)$. For the second inequality above we use that $\|\Omega^{-1}_J\|^2_\mathcal{S}\leq \frac{s}{\lambda^2_\text{min}}$, where $\lambda_\text{min}$ is the smallest eigenvalue of $\Omega_J$ and hence $1/\lambda_\text{min}$ is the largest eigenvalue of $\Omega^{-1}_J$. By our condition (i) we have $\lambda_\text{min}\geq n b$. Next we bound
\begin{align*}
&|P(S_n^V\in A_k)-P(S_n^W\in A_k)|=|P(S_{n,J}^V\in \widetilde A_k)-P(S_{n,J}^W\in \widetilde A_k)|\\
&\quad\leq \sup_{M\in\mathcal{C}}|P(\Omega_J^{-1/2}S_{n,J}^V\in M)-P(N_s(0,I_s)\in M)|=:\Delta,
\end{align*}
where $\mathcal{C}$ is the class of measurable convex sets in $\mathbb{R}^s$.  In \citet{goetze:1991} 
it is shown that for some absolute constant $C$ we have
$$
\Delta\leq C s\beta_3 ,\quad\text{where}\quad \beta_3=\sum_{t=1}^nE\|\Omega_J^{-1/2}V_{t,J}\|^3.
$$
Note that by (ii)
$$
 \beta_3= \|\Omega_J^{-1/2}\|^3 \sum_{t=1}^n E\|V_{t,J}\|^3\leq \frac{s^{3/4} B_n}{b^{3/2}\,n^{1/2}}.
$$
We can assume that $b^{1/2}n^{1/6}\geq 1$, otherwise the bound in Proposition~\ref{p:cher+c} becomes trivial, by choosing the constant big enough. Then $b^{1/2}n^{1/6}\leq b^{3/2}n^{1/2}$ and therefore
\begin{align*}
P(S_n^V\in A)&\leq P(S_n^V\in A_k)\\
&\leq P(S_n^W\in A_k)+|P(S_n^V\in A_k)-P(S_n^W\in A_k)|\\
&\leq C\frac{s^{7/4}}{b^{3/2}n^{1/2}}B_n.
\end{align*}
This shows that both, $P(S_n^V\in A)$ and $P(S_n^W\in A)$, are dominated by $\frac{s^4\log^{7/6}(pn)}{b^{1/2}n^{1/6}}$ and hence this is also true for the difference $|P(S_n^V\in A)-P(S_n^W\in A)|$.
\item The last case we need to handle is when $A\in\mathcal{A}^{\text{sp}}_2$ and $A=\cap_{k=1}^K A_k$, such that each $A_k$ contains a ball with radius $1/n$. We show that both, $P(S_n^V\in A)$ and $P(S_n^W\in A)$ are dominated by the bound in \eqref{e:finalbound}. Thus their difference is.

Like in Step~2 we can find for each $k$ an $m$-generated convex set $A^m$ such that $A_k^m\subset A_k\subset A_k^{m,1/n}$. We have  $m\leq (pn)^{2 s^2}$ and we can choose $A^m$ such that for all $v\in\mathcal{V}(A_k^m)$ we have $\|v\|_0\leq s$. In \citet{chernozhukov:2017} it is shown that quite generally $K\leq p^s$. Thus, 
$A_0:=\cap_{k=1}^K A_k^{m,1/n}$ is approximable by an $m'$-generated set with $m'\leq p^s(pn)^{2 s^2}\leq(pn)^{3 s^2}$. Using the same arguments as in Step~2 we see that (i'), (ii') and (iv') hold and hence by Lemma~\ref{p:pr31} with $d=3s^2$ we have that  $|P(S_n^V\in A_0)-P(S_n^W\in A_0)|$ is bounded as in~\eqref{e:finalbound}. Now since $A$ contains no ball of radius $1/n$ we get $P(S_n^W\in \cap_{k=1}^K A_k^{m,-1/n})=0$ and hence
\begin{align*}
P(S_n^W\in A)&\leq P(S_n^W\in A_0)\\
&= P(S_n^W\in \cap_{k=1}^K A_k^{m,1/n})-P(S_n^W\in \cap_{k=1}^K A_k^{m,-1/n})\\
&\leq P(v'S_n^W\leq \mathcal{S}_{A_k^m}(v)+1/n\colon k=1,\ldots, K, v\in\mathcal{V}(A_k^m))\\
&\qquad - P(v'S_n^W\leq \mathcal{S}_{A_k^m}(v)-1/n\colon k=1,\ldots, K, v\in\mathcal{V}(A_k^m))\\
&\leq \frac{2}{n}\frac{\sqrt{\log((pn)^{3s^2})}+2}{\sqrt{b}}.
\end{align*}
For the last inequality we used Lemma~\ref{lemNaz}.
Finally we observe
\[
P(S_n^V\in A)\leq P(S_n^V\in A_0)\leq P(S_n^W\in A_0)+|P(S_n^V\in A_0)-P(S_n^W\in A_0)|.
\]
\end{enumerate}
The proof is complete.
\end{proof}

\section{Maximum of linear forms}
Suppose that $X_1,\ldots,X_n$ are iid zero mean random elements with values in $H$ and  that  $\{a_{jnt}\}_{1\le j\le q,1\le t\le n}\subset \mathcal L(H)$ are such that $\|a_{jnt}\|\le n^{-1/2}$ for $n\ge1$. Denote
\[
	L_{nj}
	=\sum_{t=1}^na_{jnt}(X_t)
\]
for $n\ge1$ and $1\le j\le q$. We show that $\operatorname E\max_{1\le j\le q}\|L_{nj}\|^2=O(\log n)$ as $n\to\infty$ provided that $\operatorname E\|X_1\|^r<\infty$ with some $r>2$. We first prove an auxiliary lemma that is used in the proof.

\begin{lem}\label{lemma:refinedbound}
Suppose that $X_1,\ldots,X_n$ are independent zero mean random elements with values in $H$ such that $\|X_t\|\le b$ a.s.\ with some $b>0$ and $p\coloneqq P(\|X_t\|\ne0)$ for each $t=1,\ldots,n$. Then
\[
	P(\|S_n\|\ge x)
	\le2e^{-\frac{x^2}{b^2}\beta}\bigl[1+p(e^{\frac{x^2}{2b^2}\beta^2}-1)\bigr]^n
\]
for $x\ge 0$ and each $\beta\in\mathbb R$, where $S_n=X_1+\ldots+X_n$ for $n\ge1$.
\end{lem}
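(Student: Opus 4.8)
The plan is to run a Chernoff argument with the even convex function $u\mapsto\cosh(\lambda u)$, the point being that $\cosh$ controls the Hilbert‑space norm far more smoothly than the exponential. Fix $x\ge 0$, $\beta\in\mathbb R$, and set $\lambda=x\beta/b^{2}$. Since $\cosh$ is even and increasing on $[0,\infty)$, we have $\{\|S_n\|\ge x\}\subseteq\{\cosh(\lambda\|S_n\|)\ge\cosh(\lambda x)\}$, so Markov's inequality together with $\cosh u\ge\tfrac12 e^{u}$ gives
\[
	P(\|S_n\|\ge x)\ \le\ \frac{E\cosh(\lambda\|S_n\|)}{\cosh(\lambda x)}\ \le\ 2e^{-\lambda x}\,E\cosh(\lambda\|S_n\|).
\]
Recalling $\lambda=x\beta/b^{2}$, the lemma thus reduces to the exponential‑moment estimate
\begin{equation}\label{eq:pp-expmom}
	E\cosh(\lambda\|S_n\|)\ \le\ \bigl(1+p(e^{\lambda^{2}b^{2}/2}-1)\bigr)^{n},
\end{equation}
which I would establish for every $\lambda\in\mathbb R$ (everything below depends on $\lambda$ only through $\lambda^{2}$).

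To obtain \eqref{eq:pp-expmom} I would peel off one summand at a time. With $\mathcal F_t=\sigma(X_1,\dots,X_t)$ and $S_0=0$, and using that $X_t$ is independent of $\mathcal F_{t-1}$, it suffices to prove the one‑step bound
\begin{equation}\label{eq:pp-onestep}
	E\cosh(\lambda\|v+X_t\|)\ \le\ \bigl(1+p\,(\cosh(\lambda b)-1)\bigr)\,\cosh(\lambda\|v\|)\qquad\text{for every fixed }v\in H ,
\end{equation}
apply it conditionally with $v=S_{t-1}$, take expectations, iterate from $t=n$ down to $t=1$ (the initial value being $\cosh(\lambda\|S_0\|)=1$), and finish with the elementary inequality $\cosh(\lambda b)\le e^{\lambda^{2}b^{2}/2}$.

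The crux is \eqref{eq:pp-onestep}. Write $s:=\langle v/\|v\|,\,X_t\rangle\in[-b,b]$ when $v\neq0$, and $s:=0$ when $v=0$. From
\[
	\|v+X_t\|^{2}=\|v\|^{2}+2\|v\|s+\|X_t\|^{2}\ \le\ \|v\|^{2}+b^{2}+2\|v\|s
\]
and monotonicity of $\cosh$ on $[0,\infty)$ we get, for every realization of $X_t$, that $\cosh(\lambda\|v+X_t\|)\le\psi(s)$ with $\psi(s):=\cosh\!\bigl(\lambda\sqrt{\|v\|^{2}+b^{2}+2\|v\|s}\bigr)$; the radicand is $\ge(\|v\|-b)^{2}\ge0$ on $s\ge-b$. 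Now $t\mapsto\cosh(\lambda\sqrt t)$ has a power‑series expansion in $t$ with non‑negative coefficients, hence is convex on $[0,\infty)$, so $\psi$ (its composition with an affine map of $s$) is convex on $[-b,b]$ and $\psi(s)\le\frac{b-s}{2b}\psi(-b)+\frac{b+s}{2b}\psi(b)$. On the event $\{X_t=0\}$ we instead use the exact equality $\cosh(\lambda\|v+X_t\|)=\cosh(\lambda\|v\|)$. Taking expectations and noting that $s$ vanishes on $\{X_t=0\}$, so $E\bigl[s\,I_{\{X_t\neq0\}}\bigr]=E[s]=\langle v/\|v\|,\,EX_t\rangle=0$, the term linear in $s$ drops out and
\[
	E\cosh(\lambda\|v+X_t\|)\ \le\ P(X_t=0)\,\cosh(\lambda\|v\|)\ +\ P(X_t\neq0)\,\frac{\psi(b)+\psi(-b)}{2}.
\]
Finally $\psi(b)=\cosh(\lambda(\|v\|+b))$ and $\psi(-b)=\cosh(\lambda(\|v\|-b))$, so $\tfrac12(\psi(b)+\psi(-b))=\cosh(\lambda\|v\|)\cosh(\lambda b)$ by the identity $\cosh(a+c)+\cosh(a-c)=2\cosh a\cosh c$, and \eqref{eq:pp-onestep} follows since $P(X_t\neq0)=p$.

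The only genuinely delicate point is \eqref{eq:pp-onestep}, and I expect that to be the main obstacle. The naive attempt — decompose $X_t=s\,v/\|v\|+w$ with $w\perp v$, use $\cosh(\lambda\|v+X_t\|)\le\cosh(\lambda(\|v\|+s))\cosh(\lambda\|w\|)$ and expand with the addition formula — leaves a cross term $\sinh(\lambda\|v\|)\,E[\sinh(\lambda s)\cosh(\lambda\|w\|)]$ which need not be non‑positive (the law of $s$ can be skew), so it cannot simply be discarded. The device above, namely replacing $\|X_t\|^{2}$ by its bound $b^{2}$ \emph{before} linearizing, is precisely what makes the residual dependence on $s$ convex, so that interpolation to the endpoints $s=\pm b$ is legitimate and — by the $\cosh$ addition formula — lands exactly on $\cosh(\lambda\|v\|)\cosh(\lambda b)$. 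Everything else (Markov's inequality, $\cosh u\ge\tfrac12 e^{u}$, $\cosh u\le e^{u^{2}/2}$, and the tower property) is routine.
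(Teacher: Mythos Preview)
Your argument is correct and complete. It is, however, a genuinely different route from the paper's proof.

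The paper conditions on the event $A_k$ that exactly $k$ of the $X_t$'s are non-zero, applies Pinelis' sub-Gaussian tail bound (Theorem~3.5 of \citet{pinelis:1994}) as a black box to obtain $P(\|S_n\|\ge x\mid A_k)\le 2\exp\{-x^2/(2kb^2)\}$, then uses the elementary inequality $1/k\ge 2\beta-\beta^2 k$ and sums over the binomial distribution of $k$. Your proof instead carries out the $\cosh$-martingale iteration directly---which is in fact Pinelis' own technique---but builds the probability $p$ into the one-step estimate by splitting on $\{X_t=0\}$ versus $\{X_t\ne 0\}$ before iterating. The key device, replacing $\|X_t\|^2$ by $b^2$ so that the remaining dependence on $s=\langle v/\|v\|,X_t\rangle$ becomes convex and can be pushed to the endpoints $\pm b$, is exactly what makes the $\cosh$ addition formula close the loop. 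Your approach is self-contained (no external citation needed) and arguably more transparent; the paper's is shorter if one is willing to quote Pinelis.

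One cosmetic point: since the ambient space $H$ is complex, you should take $s=\operatorname{Re}\langle X_t,v/\|v\|\rangle$ rather than the inner product itself; then $\|v+X_t\|^2=\|v\|^2+2\|v\|s+\|X_t\|^2$ and $|s|\le b$ still hold, and $E[s]=\operatorname{Re}\langle EX_t,v/\|v\|\rangle=0$ as before. Nothing else changes.
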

\begin{proof}
It follows from Theorem~3.5 of \citet{pinelis:1994} that $P(\|S_n\|\ge x)\le 2\exp\{-x^2/(2nb^2)\}$ for all $x\ge0$. Let $A_k$ denote the event that $k$ out of $n$ random elements $X_1,\ldots,X_n$ are not equal to $0$ with $k=0,\ldots,n$. Using the fact that $k^{-1}\ge2\beta-\beta^2k$ for $k\ne0$ and $\beta\in\mathbb R$, we obtain
\begin{align*}
	P(\|S_n\|\ge x)
	&=\sum_{k=0}^nP(\|S_n\|\ge x\mid A_k)P(A_k)\\
	&\le2\sum_{k=1}^ne^{-\frac{x^2}{2kb^2}}{n\choose k}p^k(1-p)^{n-k}\\
	&\le2e^{-\frac{x^2}{b^2}\beta}\sum_{k=1}^n{n\choose k}e^{\frac{x^2}{2b^2}\beta^2k}p^k(1-p)^{n-k}\\
	&\le2e^{-\frac{x^2}{b^2}\beta}\bigl[1+p(e^{\frac{x^2}{2b^2}\beta^2}-1)\bigr]^n
\end{align*}
for $x>0$ and all $\beta\in\mathbb R$. The proof is complete.
\end{proof}
Now we are ready to prove the main result.
\begin{Theorem}\label{thm:Ologn}
Suppose that $X_1,\ldots,X_n$ are iid random elements with values in $H$ such that $\operatorname EX_1=0$ and $\operatorname E\|X_1\|^r$ with some $r>2$. Then
\[
	\operatorname E\max_{1\le j\le q}\|L_{nj}\|^2
	=O(\log n)\quad\text{as}\quad n\to\infty.
\]
\end{Theorem}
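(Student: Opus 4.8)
The plan is to run the classical truncation argument for maxima of normalized sums (as in the one‑dimensional result of \citet{davis:mikosch:1999}), but carried out directly in $H$ and with a variance‑sensitive exponential inequality for Hilbert‑space valued sums. Fix the truncation level $b_n=n^{1/(2(r-1))}$, so that $b_n\to\infty$ while $b_n/\sqrt n=n^{(2-r)/(2(r-1))}\to0$ because $r>2$. Write $X_t=\eta_t+\zeta_t$ with
\[
	\eta_t=X_tI_{\{\|X_t\|\le b_n\}}-\operatorname E[X_1I_{\{\|X_1\|\le b_n\}}],\qquad
	\zeta_t=X_tI_{\{\|X_t\|>b_n\}}-\operatorname E[X_1I_{\{\|X_1\|>b_n\}}],
\]
both centred (their sum is $X_t-\operatorname EX_1=X_t$); correspondingly $L_{nj}=A_{nj}+B_{nj}$ with $A_{nj}=\sum_{t=1}^na_{jnt}(\eta_t)$ and $B_{nj}=\sum_{t=1}^na_{jnt}(\zeta_t)$. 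Since $\max_j\|L_{nj}\|^2\le 2\max_j\|A_{nj}\|^2+2\max_j\|B_{nj}\|^2$, I would treat the two pieces separately, using throughout that $q\le n$, hence $\log q=O(\log n)$. Set $\sigma^2\coloneqq\operatorname E\|X_1\|^2<\infty$.

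For the large part $B_{nj}$ no exponential bound is needed. From $\|a_{jnt}\|\le n^{-1/2}$ and $\|\operatorname E[X_1I_{\{\|X_1\|>b_n\}}]\|\le\operatorname E[\|X_1\|I_{\{\|X_1\|>b_n\}}]$,
\[
	\|B_{nj}\|\le n^{-1/2}\sum_{t=1}^n\|X_t\|I_{\{\|X_t\|>b_n\}}+n^{1/2}\operatorname E[\|X_1\|I_{\{\|X_1\|>b_n\}}],
\]
and the right‑hand side is free of $j$. Squaring, taking expectations, using independence, the bound $\operatorname E[\|X_1\|^\alpha I_{\{\|X_1\|>b_n\}}]\le b_n^{-(r-\alpha)}\operatorname E[\|X_1\|^rI_{\{\|X_1\|>b_n\}}]$ for $\alpha\in\{1,2\}$, and the identity $nb_n^{-2(r-1)}=1$, one obtains
\[
	\operatorname E\max_{1\le j\le q}\|B_{nj}\|^2\le 2\operatorname E[\|X_1\|^2I_{\{\|X_1\|>b_n\}}]+4\bigl(\operatorname E[\|X_1\|^rI_{\{\|X_1\|>b_n\}}]\bigr)^2=o(1),
\]
since $\operatorname E[\|X_1\|^rI_{\{\|X_1\|>b_n\}}]\to0$; in particular this is $o(\log n)$.

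The core is the bulk part $A_{nj}$. Each summand $a_{jnt}(\eta_t)$ is centred, has $\|a_{jnt}(\eta_t)\|\le(b_n+1)/\sqrt n$ for $n$ large (because $\|\operatorname E[X_1I_{\{\|X_1\|\le b_n\}}]\|=o(1)$), and — crucially — $\sum_{t=1}^n\operatorname E\|a_{jnt}(\eta_t)\|^2\le n^{-1}\sum_{t=1}^n\operatorname E\|\eta_t\|^2\le\operatorname E\|X_1\|^2=\sigma^2=O(1)$. A Bernstein‑type inequality for sums of independent bounded Hilbert‑space valued random elements (\citet{pinelis:1994}; a refinement of \autoref{lemma:refinedbound} in which one splits the summands by magnitude yields the same) then gives, for some $c>0$,
\[
	P(\|A_{nj}\|\ge x)\le 2\exp\Bigl\{-\frac{cx^2}{\sigma^2}\Bigr\}\qquad\text{for }0\le x\le c\,\sigma^2\sqrt n/b_n ,
\]
the upper restriction being the Bernstein cross‑over at $x\asymp\sigma^2/M$ with $M\asymp b_n/\sqrt n$; since $\sqrt n/b_n=n^{(r-2)/(2(r-1))}\to\infty$, this range contains every $x$ of order $\sqrt{\log n}$. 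A union bound over $q\le n$ indices gives $P(\max_j\|A_{nj}\|\ge x)\le 2n\exp\{-cx^2/\sigma^2\}$ in that range, so
\[
	\operatorname E\max_{1\le j\le q}\|A_{nj}\|^2=\int_0^\infty 2u\,P\bigl(\max_{1\le j\le q}\|A_{nj}\|>u\bigr)\,du
	\le\int_0^{C_0\sqrt{\log n}}2u\,du+\int_{C_0\sqrt{\log n}}^\infty 4n\,u\,e^{-cu^2/\sigma^2}\,du+o(1),
\]
where the $o(1)$ absorbs the contribution of $u\gtrsim\sqrt n/b_n$ (there the exponent is already of polynomial order in $n$). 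Taking $C_0$ with $cC_0^2/\sigma^2\ge2$ makes the middle integral $O(n^{-1})$, while the first equals $C_0^2\log n$; hence $\operatorname E\max_j\|A_{nj}\|^2=O(\log n)$, and combining with the bound on $B_{nj}$ finishes the proof.

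The step I expect to be the main obstacle is the exponential bound for $A_{nj}$: the naive estimate that only uses $\|\eta_t\|\le2b_n$ gives a tail $2\exp\{-x^2/(Cb_n^2)\}$, which after the union bound yields merely $O(b_n^2\log n)$ — too weak, since $b_n\to\infty$. One must genuinely exploit that the \emph{aggregate variance} $\sum_t\operatorname E\|a_{jnt}(\eta_t)\|^2$ is $O(1)$ rather than $O(b_n^2)$, which is precisely where a variance‑sensitive inequality is required, together with checking that its sub‑Gaussian regime reaches beyond the scale $\sqrt{\log n}$ imposed by the $q\le n$ frequencies — this is what forces the choice $b_n=o(\sqrt n)$.
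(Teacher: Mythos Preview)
Your argument is essentially correct, and it is a genuinely different route from the paper's. The paper does \emph{not} use a single truncation with a Bernstein inequality. Instead it writes each $X_t$ as a telescoping sum over dyadic shells,
\[
X_t=\hat X_{t,0}+\sum_{k=1}^{\log_2 n}\check X_{t,k}+X_{t,\log_2 n}',
\]
where $\check X_{t,k}$ lives on $\{\|X_t\|\asymp 2^{k/r}\}$, has the Hoeffding bound $\|\check X_{t,k}\|\le 2\cdot 2^{k/r}$ and the \emph{sparsity} $P(\check X_{t,k}\neq 0)\le 2^{-(k-1)}$. The paper then applies its \autoref{lemma:refinedbound}, which is Pinelis' Hoeffding-type inequality refined by conditioning on the binomial count of nonzero summands. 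Summing the $\log_2 n$ layers with geometric weights gives the $O(\log n)$ bound. In other words, the paper manufactures a variance-sensitive tail bound out of a purely Hoeffding-type ingredient by slicing the support of $\|X_t\|$.

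Your approach short-circuits this layering by invoking the Bernstein inequality directly; the crucial input you need is that for independent, centred, bounded $H$-valued summands one has
\[
P\Bigl(\Bigl\|\sum_t Y_t\Bigr\|\ge x\Bigr)\le 2\exp\Bigl(-\frac{x^2}{2(V+Mx/3)}\Bigr),\qquad V=\sum_t E\|Y_t\|^2,\ M=\max_t\operatorname{ess\,sup}\|Y_t\|,
\]
which is indeed available for separable Hilbert spaces (it follows from \citet[Theorem~3.3]{pinelis:1994} in a $(2,1)$-smooth space via the standard Bennett-to-Bernstein step; note that the paper only cites Theorem~3.5 of Pinelis, which is the Hoeffding version). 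With $V\le\sigma^2$ and $M\le 2b_n/\sqrt n$ your sub-Gaussian regime does extend to $x\asymp\sqrt n/b_n=n^{(r-2)/(2(r-1))}\gg\sqrt{\log n}$, so the union bound and the middle integral work as you wrote. The one place your write-up is too casual is the ``$o(1)$ absorbs $u\gtrsim\sqrt n/b_n$'': you cannot use the sub-Gaussian bound there, so you should explicitly split off $\int_{c\sigma^2\sqrt n/b_n}^\infty$ and apply the linear-exponent tail $2\exp\{-c'u\sqrt n/b_n\}$; the resulting integral is $O\bigl(b_n^2\,e^{-c''n^{(r-2)/(r-1)}}\bigr)=o(1)$, as you claim. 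In short: your proof is shorter and more transparent, at the price of importing a stronger off-the-shelf inequality; the paper's dyadic argument is self-contained from the simpler Hoeffding bound.
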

The proof of \autoref{thm:Ologn} is based on a decomposition of random elements. Consider some zero mean random element $\xi$ with values in $H$ such that $\operatorname E\|\xi\|^r=1$. Suppose that $\{p_k\}_{k\ge0}$ is a strictly decreasing sequence of probabilities that converges to $0$ as $k\to\infty$. Choose $R\geq 1$. The decomposition of $\xi$ is given by
\begin{equation}\label{eq:decomp}
	\xi
	=\hat\xi_0+\sum_{k=1}^R\check{\xi}_k+\xi_R^\prime,
\end{equation}
where the random elements are defined in the following way. Since we are only interested in the distribution, we can assume without loss of generality that a uniform random variable can be defined on the underlying probability space. Then the space is non-atomic and hence there exists an event $F_R$ such that $P(F_R)=p_R$ and $\|\xi\|\le p_R^{-1/r}$ on $F^c_R$. Define
\[
	\hat\xi_R
	=\xi I_{F_R^c}+p_R^{-1}\operatorname E[\xi I_{F_R}]I_{F_R}
	\quad\text{and}\quad
	\xi_R^\prime
	=\xi-\hat\xi_R.
\]
The remaining random elements are defined recursively. Denote $\hat F_{k-1}$ the event such that  $P(\hat F_{k-1})=p_{k-1}$ and $\|\hat\xi_k\|\le p_{k-1}^{-1/r}$ on $\hat F_{k-1}^c$ with $1\leq k\leq R$. Moreover, define
\[
	\hat\xi_{k-1}
	=\hat\xi_k I_{\hat F_{k-1}^c}+p_{k-1}^{-1}\operatorname E[\hat\xi_k I_{\hat F_{k-1}}]I_{\hat F_{k-1}}
	\quad\text{and}\quad
	\check\xi_k
	=\hat\xi_k-\hat\xi_{k-1}.
\]
Then decomposition \eqref{eq:decomp} has the following properties
\begin{enumerate}[(i)]
\item$\operatorname E\hat\xi_0=\operatorname E\check\xi_1=\ldots=\operatorname E\check\xi_R=\operatorname E\xi_R'=0$;
\item$\operatorname E\|\hat\xi_0\|^r\le\operatorname E\|\hat\xi_1\|^r\le\ldots\le\operatorname E\|\hat\xi_{R}\|^r\le\operatorname E\|\xi\|^r$;
\item$\operatorname E\|\xi_R'\|^r\le2^r\operatorname E\|\xi\|^r$ and $\operatorname E\|\xi_k\|^r\le2^r\operatorname E\|\tilde\xi_k\|^r$ for $1\le k\le R$;
\item by H\"older's inequality, $\|p_R^{-1}\operatorname E[\xi I_{F_R}]\|\le p_R^{-1/r}$ and $\|p_{k-1}^{-1}\operatorname E[\hat\xi_k I_{\hat F_{r-1}}]\|\le p_{k-1}^{-1/r}$ for $1\le k\le R$ and hence $\|\hat\xi_k\|\le p_k^{-1/r}$ for $0\le k\le R$ and $\|\check\xi_k\|\le2p_{k-1}^{-1/r}\le2p_k^{-1/r}$ for $1\le k\le R$;
\item$P(\xi_R'\ne0)\le p_R$ and $P(\check\xi_k\ne0)\le p_{k-1}$ for $1\le k\le R$.
\end{enumerate}

\begin{proof}[Proof of \autoref{thm:Ologn}]
Assume that $\operatorname E\|X_1\|^r=1$ without loss of generality. We use decomposition~\eqref{eq:decomp} with $p_k=2^{-k}$ and $R=\log_2n$ (the logarithm to the base $2$). Then
\[
	L_{nj}
	=\sum_{t=1}^na_{jnt}(\hat X_{t,0})
	+\sum_{k=1}^{\log_2n}\sum_{t=1}^na_{jnt}(\check X_{t,k})
	+\sum_{t=1}^na_{jnt}(X_{t,\log_2n}')
\]
for $1\le j\le q$ and $n\ge1$. Observe that $\hat X_{t,0}=0$ almost surely for $1\le t\le n$ since $p_0=1$.

By H\"older's inequality,
\[
	n^{-1/2}\sum_{t=1}^n\|X_{t,\log_2n}'\|
	\le n^{-1/2}N_n^{1-1/r}\Bigl[\sum_{t=1}^n\|X_{t,\log_2n}'\|^r\Bigr]^{1/r},
\]
where the random variable $N_n=\sum_{t=1}^nI_{\{\|X_{t,\log_2n}'\|\ne 0\}}$ follows a binomial distribution. Observe that 
\[
	\operatorname Ee^{N_n}
	=[1+P(\|X_{1,\log_2n}'\|\ne0)(e-1)]^n
	\le[1+n^{-1}(e-1)]^n
	<e^e.
\]
It follows that any fixed moment of $N_n$ is bounded for all $n\ge1$ and hence
\[
	\operatorname E\max_{1\le j\le q}\Bigl\|\sum_{t=1}^na_{jnt}(X_{t,\log_2n}')\Bigr\|^2
	\le n^{-1}\operatorname EN_n^{2-2/q}[n\operatorname E\|X_{1,\log_2n}'\|^r]^{2/r}
	=O(n^{2/r-1})
\]
as $n\to\infty$ using Jensen's inequality.

By the triangle inequality,
\begin{align*}
	\operatorname E\max_{1\le j\le q}\Bigl\|\sum_{k=1}^{\log_2n}\sum_{t=1}^na_{jnt}(\check X_{t,k})\Bigr\|^2
	&\le\operatorname E\Bigl[\sum_{k=1}^{\log_2n}\max_{1\le j\le q}\Bigl\|\sum_{t=1}^na_{jnt}(\check X_{t,k})\Bigr\|\Bigr]^2\\
	&\le\Bigl|\sum_{k=1}^{\log_2n}\Bigl(\operatorname E\max_{1\le j\le q}\Bigl\|\sum_{t=1}^na_{jnt}(\check X_{t,k})\Bigr\|^2\Bigr)^{1/2}\Bigr|^2.
\end{align*}
Choose $\delta>0$ such that $1/r+\delta<1/2$. 
We show that
\[
	\operatorname E\max_{1\le j\le q}\Bigl\|\sum_{t=1}^n2^{\delta k-1}a_{jnt}(\check X_{t,k})\Bigr\|^2
	\le C^2\log n+O(1)
\]
for each $1\le k\le\log_2n$ with some $C>0$. Note that this yields the proof. More specifically, we show that
\begin{align}
	&\operatorname E\Bigl[\Bigl\|\sum_{t=1}^n2^{\delta k-1}a_{jnt}(\check X_{t,k})\Bigr\|^2-C^2\log n\Bigr]_+\nonumber\\
	&\quad=\int_{C\sqrt{\log n}}^\infty2xP\Bigl(\Bigl\|\sum_{t=1}^n2^{\delta k-1}a_{jnt}(\check X_{t,k})\Bigr\|>x\Bigr)dx
	\le\frac Cn \label{eq:maxmoment}
\end{align}
for each $1\le j\le q$ and $1\le k\le\log_2n$.

Since $\|2^{\delta k-1}a_{jnt}(\check X_{t,k})\|\leq p_k^{-(1/r+\delta)}n^{-1/2}$ and $P(\check X_{t,k}\ne0)\le p_{k-1}$ for $1\le t\le n$, we are in the position to apply \autoref{lemma:refinedbound}. Let $s_k\coloneqq p_k^{-(1/r+\delta)}$ and let $\beta=\beta'/n$. We thus obtain
\begin{equation}\label{eq:refinedbound}
	P\Bigl(\Bigl\|\sum_{t=1}^n2^{\delta k-1}a_{jnt}(\check X_{t,k})\Bigr\|>x\Bigr)
	\le2e^{-\frac{x^2}{s_k^2}\beta'}\bigl[1+p_{k-1}(e^{\frac{x^2}{2ns_k^2}(\beta')^2}-1)\bigr]^n.
\end{equation}
We split the integral in equation~\eqref{eq:maxmoment} into two integrals
\begin{multline*}
	\int_{C\sqrt{\log n}}^\infty2xP\Bigl(\Bigl\|\sum_{t=1}^n2^{\delta k-1}a_{jnt}(\check X_{t,k})\Bigr\|>x\Bigr)dx
	=\int_{C\sqrt{\log n}}^{\sqrt n/s_k}2xP\Bigl(\Bigl\|\sum_{t=1}^n2^{\delta k-1}a_{jnt}(\check X_{t,k})\Bigr\|>x\Bigr)dx\\
	+\int_{\sqrt n/s_k}^\infty2xP\Bigl(\Bigl\|\sum_{t=1}^n2^{\delta k-1}a_{jnt}(\check X_{t,k})\Bigr\|>x\Bigr)dx.
\end{multline*}
Using~\eqref{eq:refinedbound} and $p_{k-1} \leq 2s_k^{-2}$, setting $\beta'=\varepsilon s_k^2$, where $\varepsilon>0$ is a small number, and using the inequality $(e^x-1)\le 2x$, which holds for small enough values of $x$, we obtain
\begin{align*}
	\int_{C\sqrt{\log n}}^{\sqrt n/s_k}2xP\Bigl(\Bigl\|\sum_{t=1}^n2^{\delta k-1}a_{jnt}(\check X_{t,k})\Bigr\|>x\Bigr)dx
	&\le2\int_{C\sqrt{\log n}}^{\sqrt n/s_k}2xe^{-x^2\varepsilon}\bigl[1+\frac{2x^2\varepsilon^2}n\bigr]^ndx\\
	&\le2\int_{C\sqrt{\log n}}^{\sqrt n/s_k}2xe^{-x^2\varepsilon(1-2\varepsilon)}dx\\
	&=\frac2{\varepsilon(1-2\varepsilon)}\bigl[e^{-C^2\varepsilon(1-2\varepsilon)\log n}-e^{-\frac{n}{s_k^2}\cdot\varepsilon(1-2\varepsilon)}\bigr]\\
	&\le\frac2{\varepsilon(1-2\varepsilon)}\cdot n^{-C^2\varepsilon(1-2\varepsilon)},
\end{align*}
where $C$ is chosen in such a way that $C^2\varepsilon(1-2\varepsilon)\ge1$.

Denote $\rho=1/2-(1/r+\delta)>0$. Using~\eqref{eq:refinedbound}, setting $\beta'=\varepsilon s_k\sqrt n/x$, where $\varepsilon>0$ is a small number, again using the inequality $(e^x-1)\le 2x$, which holds for small enough values of $x$, as well as the inequality $(1+x/n)^n\le e^x$ for $x\in\mathbb R$ and $n\ge1$, we obtain
\begin{align*}
	\int_{\sqrt n/s_k}^\infty2xP\Bigl(\Bigl\|\sum_{t=1}^n2^{\delta k-1}a_{jnt}(\check X_{t,k})\Bigr\|>x\Bigr)dx
	&\le\int_{\sqrt n/s_k}^\infty2x2e^{-\frac{x\sqrt n}{s_k}\varepsilon}\bigl[1+p_{k-1}(e^{\frac{\varepsilon^2}{2}}-1)\bigr]^ndx\\
	&\le4[1+p_{k-1}\varepsilon^2]^n\int_{\sqrt n/s_k}^\infty xe^{-\frac{x\sqrt n}{s_k}\varepsilon}dx\\
	&\le4\varepsilon^{-1}e^{\frac{2n\varepsilon^2}{s_k^2}}e^{-\frac n{s_k^2}\varepsilon}\bigl[1+\frac{s_k^2}{\varepsilon n}\bigr]\\
	&\le4\varepsilon^{-1}e^{-\varepsilon(1-2\varepsilon)n^{1-2(1/r+\delta)}}\bigl[1+\varepsilon^{-1}n^{2(1/r+\delta)-1}\bigr]\\
	&=4\varepsilon^{-1}e^{-\varepsilon(1-2\varepsilon)n^{2\rho}}\bigl[1+\varepsilon^{-1}n^{-2\rho}\bigr],
\end{align*}
where we used the fact that $p_{k-1}\le 2s_k^{-2}$ and $s_k\leq n^{1/r+\delta}$ for $1\le k\le\log_2n$. The proof is complete.
\end{proof}

\newpage
\bibliographystyle{plainnat}
\bibliography{references}

\end{document}